\documentclass[11pt, reqno]{amsart}

\usepackage{amsmath, amsfonts, amssymb, amsbsy, bigstrut, graphicx, enumerate,  upref, longtable, comment}

\usepackage{pstricks}

\usepackage[breaklinks]{hyperref}

\usepackage[numbers, sort&compress]{natbib}




\newcommand{\diam}{\textup{diam}}

\newcommand{\leb}{\textup{Leb}}

\newcommand{\ep}{\epsilon}

\newcommand{\mc}{\mathcal{C}}

\newcommand{\mt}{\mathcal{T}}

\newtheorem{thm}{Theorem}[section]
\newtheorem{lmm}[thm]{Lemma}
\newtheorem{cor}[thm]{Corollary}

\theoremstyle{definition}

\newcommand{\cov}{\mathrm{Cov}}
\newcommand{\dm}{\mathcal{D}}

\newcommand{\ee}{\mathbb{E}}

\newcommand{\mf}{\mathcal{F}}

\newcommand{\pp}{\mathbb{P}}

\newcommand{\rr}{\mathbb{R}}

\newcommand{\uu}{\mathcal{U}}
\newcommand{\var}{\mathrm{Var}}

\newcommand{\zz}{\mathbb{Z}}
 
\newcommand{\vv}{\mathcal{V}}



\numberwithin{equation}{section}

\newcommand{\eq}[1]{\begin{align*} #1 \end{align*}}
\newcommand{\eeq}[1]{\begin{align} \begin{split} #1 \end{split} \end{align}}



\begin{document}
\title[Rigidity of the 3D hierarchical Coulomb gas]{Rigidity of the three-dimensional hierarchical Coulomb gas}
\author{Sourav Chatterjee}
\address{\newline Department of Statistics \newline Stanford University\newline Sequoia Hall, 390 Serra Mall \newline Stanford, CA 94305\newline \newline \textup{\tt souravc@stanford.edu}}
\thanks{Research partially supported by NSF grant DMS-1608249}
\keywords{Coulomb gas, interacting particles, rigidity, hyperuniformity}
\subjclass[2010]{60K35, 82B05}

\begin{abstract}
A random set of points in Euclidean space is called `rigid' or `hyperuniform' if  the number of points falling inside any given region has significantly smaller fluctuations than the corresponding number for a set of i.i.d.~random points. This phenomenon has received considerable attention in recent years, due to its appearance in random matrix theory, the theory of Coulomb gases and zeros of random analytic functions. However, most of the published results are in dimensions one and  two. This paper gives the first proof of hyperuniformity in a Coulomb type system in dimension three, known as the hierarchical Coulomb gas. This is a simplified version of the actual 3D Coulomb gas. The interaction potential in this model, inspired by Dyson's hierarchical model of the Ising ferromagnet, has a hierarchical structure and is locally an approximation of the Coulomb potential. Hyperuniformity is proved at both macroscopic and microscopic scales, with  upper and lower bounds for the order of fluctuations that match up to logarithmic factors. The fluctuations have cube-root behavior, in agreement with a well-known prediction for the 3D Coulomb gas. For completeness, analogous results are also proved for the 2D  hierarchical Coulomb gas and the 1D hierarchical log gas.
\end{abstract}


\maketitle


\vskip.5in
\section{Introduction and results}
\subsection{Interacting gases}
The probability density of $n$ independent and identically distributed points in $\rr^d$ can always be represented as
\[
\frac{1}{Z}\exp\biggl(-\beta \sum_{i=1}^n V(x_i)\biggr)
\]
where $V$ is some real-valued function on $\rr^d$, $\beta$ is some positive parameter, and $Z$ is the normalizing constant. 

Suppose that we want to introduce some interactions between the points. The simplest way to do that is to introduce a pairwise interaction term in the exponent; the new density is of the form
\[
\frac{1}{Z}\exp\biggl(-\beta \sum_{1\le i<j\le n} w(x_i,x_j) - \beta n \sum_{i=1}^n V(x_i)\biggr),
\]
where $w$ is a symmetric real-valued function on $\rr^d\times \rr^d$, known as the interaction potential. The factor $n$  is put in front of the second term to ensure that the two terms are of comparable size, which is necessary for ensuring that the system has nontrivial properties in the large $n$ limit. A particularly important type of interaction potentials are the Coulomb potentials, defined~as
\[
w(x,y) =
\begin{cases}
|x-y| &\text{ if } d=1,\\
-\log|x-y| &\text{ if } d=2,\\
|x-y|^{2-d} &\text{ if } d\ge 3,
\end{cases}
\]
where $|x-y|$ is the Euclidean distance between $x$ and $y$. 
With $w$ as above and $V(x)=|x|^2$, we get the so-called Coulomb gases.  

The 1D Coulomb gas is a very well-understood exactly solvable system, studied thoroughly by physicists~\cite{lenard61, lenard63, kunz74, dharetal17} and mathematicians~\cite{bl75, am80}. 
In higher dimensions, much less is known. For  $\beta=1$, the 2D Coulomb gas is an exactly solvable model due to its relationship with the Ginibre ensemble of random matrices~\cite{ginibre65}. The Ginibre ensemble has received widespread attention from mathematicians~\cite{girko84, ridervirag07, taovu08, borodinsinclair09, ahm11, ahm15, byy14a, byy14b, ghoshperes17}.  For general $\beta$, however, the 2D Coulomb gas has no representation as an exactly solvable model. Fortunately, a number of results are now known for the case of general $\beta$. Large deviation principles for the 2D Coulomb gas were proved in~\cite{bz98, petzhiai98, hardy12}, and extended to general dimensions in~\cite{chafaietal14, serfaty14}. Concentration inequalities were proved in~\cite{chafai16} and dynamical properties have been recently studied in~\cite{bolleyetal17}. The ground state in a related model was studied in~\cite{radin81}. Local properties have been studied in great depth in the recent papers~\cite{sandierserfaty15,leble15, bbny15, bbny16, lebleserfaty16}. 

In dimensions three and higher, very precise information about the normalizing constants has been obtained in \cite{rougerieserfaty16, lebleserfaty15}. For a comprehensive survey, see~\cite{serfaty15}. Further results are provable by the techniques of these papers but have not been written up yet, as I learned from Sylvia Serfaty in a personal communication.

Another widely studied example is the 1D log gas, where $d=1$, $w(x,y)=-\log|x-y|$ and $V(x)=x^2$. For $\beta=1,2$ and $4$, the log gases arise as eigenvalues of various random matrix ensembles and are exactly solvable. Precise fluctuation estimates for these special values of $\beta$ were obtained in~\cite{johansson98}. There is now considerable information available about other values of $\beta$ and more general $V$~\cite{shcherbina13, bey12, bey14, bey14b, beyy16, bekermanlodhia16}. Asymptotic series expansions for the normalizing constants were computed in~\cite{bg13, bg13b, bgk15}. Central limit theorems have been investigated in~\cite{bgg17, bls17, llw17}. For an introduction to log gases and their connections with random matrices, see~\cite{deift99, forrester10, agz10}. A recent survey is given in~\cite{bbny15}.

\subsection{Hyperuniformity}
If we have a collection of $n$ independent and identically distributed points in $\rr^d$, then the number of points that fall in a given set has fluctuations of order $n^{1/2}$ as $n\to\infty$. If a random point process has the property that this order of fluctuations is $o(n^{1/2})$, then it is called `rigid' or `hyperuniform'.  More generally, a point process is called hyperuniform if its empirical measure has smaller fluctuations than the empirical measure of a collection of i.i.d.~random points. In this paper I will use the terms `rigidity' and `hyperuniformity' interchangeably, but in general hyperuniformity is probably a more suitable term for the phenomenon described above, since rigidity has also been used to mean other things in the literature. 

Sometimes point processes are very rigid, such as eigenvalues of various random matrix ensembles, for which the order of fluctuations may be as small as $O(\sqrt{\log n})$ or even $O(1)$ if one considers integrals of the empirical measure with respect to smooth functions.  Rigidity/hyperuniformity  has been established for many processes in dimensions one and two. For example, rigidity of the eigenvalues of random unitary matrices was proved  in~\cite{diaconisevans01, wieand02}.  Rigidity of eigenvalues in the standard  hermitian random matrix ensembles follow from the results of~\cite{costinlebowitz95, pastur06, bey12, bey14, bey14b, ghosh15, taovu13}. Rigidity of eigenvalues of non-hermitian random matrices has been studied  in~\cite{borodinsinclair09,byy14a, byy14b,taovu15, ghoshperes17}. Another class of 2D processes that exhibit hyperuniformity are zeros of random analytic functions.  This has been investigated in~\cite{nsv07, nsv08, houghetal09, nazarovsodin11, ghosh16, gz16, gl17, ghoshperes17}. In recent work, rigidity of the 2D Coulomb gas has been established in~\cite{bbny15, bbny16, lebleserfaty16}.

However, no such results for interacting particle systems of the above kind are known in dimensions three and higher. The only random point process which has been shown to be hyperuniform in any dimension $d\ge 3$, as far as I know, is the point process obtained by giving i.i.d.~random perturbations to the vertices of $\zz^d$. This is a recent result~\cite{peressly14}, improving on an earlier work in $d\le 2$~\cite{holroydsoo13}. The notion of rigidity in these papers is somewhat different than hyperuniformity. For interacting systems such as Coulomb gases, the detailed information about the normalizing constants obtained in~\cite{serfaty15, rougerieserfaty16} provide some control on the order of fluctuations in $d\ge 3$, but do not establish that the order of fluctuations is smaller than $n^{1/2}$. There is a remark in~\cite{lebleserfaty16} that the 2D techniques of that paper can be extended to higher dimensions for proving rigidity of integrals of smooth functions with respect to the empirical measure of a Coulomb gas, but the details have not yet been written up. 

A class of processes that are related to Coulomb gases in dimension one but not in higher dimensions, are the so-called orthogonal polynomial ensembles~(see \cite{konig05} for a survey). These are generalizations of the 1D determinantal point processes arising in random matrix theory, and have nice mathematical structures that allow various exact calculations. A general central limit theorem for orthogonal polynomial ensembles was proved in~\cite{soshnikov02}. Rigidity/hyperuniformity for orthogonal polynomial ensembles (beyond random matrix eigenvalues) have been investigated in recent years, for example in \cite{berman14, jl15, breuerduits16, breuerduits17} in dimensions one and two, and \cite{bardenethardy16} in dimension three.

There is a considerable amount of work by physicists  on hyperuniformity. For example, \cite{martinyalcin80} and \cite{martin88} give physics proofs of hyperuniformity in 3D Coulomb systems. A non-rigorous computation of covariances in Coulomb systems in all dimensions greater than one was given in~\cite{lebowitz83}. More recently, a physics proof of hyperuniformity of free fermions at zero temperature (a certain kind of determinantal point process)    was given in~\cite{castin} in $d\le3$, based  on  an asymptotic formula  for the variance of the number of points falling in a given region. This formula was later extended to arbitrary dimensions in~\cite{torquatoetal08}. Similar formulas have been very recently obtained for the 1D log gas (with general $\beta$ and special $V$) in~ \cite{marinoetal14, marinoetal16}. For an extensive list of references to the physics literature, see the recent survey~\cite{gl17b}.


\subsection{The hierarchical Coulomb gas model}
In this paper, we consider a model of an interacting gas of $n$ particles in the 3D unit cube $[0,1]^3$, which have joint probability density
\eeq{
\frac{1}{Z(n,\beta)}\exp\biggl(-\beta \sum_{1\le i<j\le n}w(x_i,x_j)\biggr),\label{densdef}
}
where $w(x,y)$ is a symmetric potential that behaves like the Coulomb potential $|x-y|^{-1}$ at short distances, and $Z(n,\beta)$ is the normalizing constant.  The potential $w$ is  defined as follows.

The unit cube in $\rr^3$ can be partitioned into $8$ sub-cubes of side-length $1/2$. Each of these sub-cubes can be further partitioned into $8$ sub-cubes of side-length $1/4$, and so on, generating a tree of dyadic sub-cubes. For any two distinct points $x$ and $y$ in the unit cube, let $w(x,y)=2^k$, where $k$ is the smallest number such that $x$ and $y$ belong to distinct dyadic sub-cubes of side-length $2^{-k}$. There may be some ambiguity about points on the boundaries of the cubes, but since they form a set of measure zero, they  do not matter.  This $w$ is our potential, which defines our point process through the density \eqref{densdef}. Note that $w$ is symmetric but not translation invariant. 

For typical $x$ and $y$ which are close together, $w(x,y)$ behaves like a multiple of the Coulomb potential $|x-y|^{-1}$.  Indeed, it is not hard to prove that there is a constant $C$ such that for all $x$ and $y$,
\[
w(x,y)\le \frac{C}{|x-y|}.
\]
Conversely, there is a constant $c>0$ such that for any $0<\delta<1$, the average value of $w(x,y)$ over all pairs $(x,y)$ with $|x-y|=\delta$ is bounded below by $c/\delta$. 

Replacing the Euclidean distance by a hierarchical distance as above is a famous idea of \citet{dyson53, dyson69}, who formulated and analyzed a hierarchical version of the 1D Ising model with long range interactions. This is now known as `Dyson's hierarchical model'. Dyson's work has inspired a large body of literature on hierarchical models over the years, and is still an active area of research. The model proposed above is sometimes called the `hierarchical Coulomb gas'. The 2D hierarchical Coulomb gas has received considerable attention in the mathematical physics literature~\cite{benfattoetal86, marchettiperez89, dimock90, kappeleretal91, benfattorenn92, guidimarchetti01}. However, not much is known about this model in dimensions three and higher.

Just as the Coulomb potential is the Green's function for Brownian motion, the potential $w$ can also be realized as the Green's function of a certain continuous time random walk on the unit cube, following a method developed in~\cite{bendikovetal12, bendikovetal} for constructing Markov semigroups on ultrametric spaces. More generally, the prescription given in~\cite{bendikovetal12, bendikovetal} can be used for a large class of  hierarchical potentials arising from Dyson-type constructions. 


The chief reason why the hierarchical structure of the potential helps in the analysis is that it does an automatic `coarse-graining' of the interactions. The total interaction between the particles in two disjoint dyadic cubes is determined solely by the numbers of particles in those cubes, rather than their exact locations. 

One of our main results, stated in the next subsection, is that if $U$ is a nonempty open subset of the unit cube with a nicely behaved boundary, then the number of points falling in $U$ has fluctuations of order at most $n^{1/3}\sqrt{\log n}$, thereby establishing the hyperuniformity of our point process. This is matched up to a logarithmic factor by a  lower bound of order $n^{1/3}$. We also establish microscopic hyperuniformity in a local neighborhood of any given point. Finally, the analogous results in dimensions one and two are established for the sake of completeness.

\subsection{Results in 3D}\label{results}
Take any $d\ge 1$, and let $U$ be a nonempty open subset of $\rr^d$. Let $\partial U$ be the boundary of $U$. For each $\ep>0$, let $\partial U_\ep$ be the set of all points that are at distance $\le \ep$ from $\partial U$.  Let $\diam(U)$ denote the diameter of $U$. We will say that the boundary of $U$ is {\it regular} if there is some constant $C$ such that for all $0< \ep\le \diam(U)$,
\eeq{
\textup{Leb}(\partial U_\ep) \le C\ep,\label{regeq}
}
where $\textup{Leb}$ stands for Lebesgue measure.

Now let $d=3$, and let $U$ be a nonempty open subset of $[0,1]^3$ whose boundary is regular in the sense defined above. Take any $n\ge 2$ and $\beta>0$, and consider an interacting gas of $n$ particles behaving according to the model defined above.  Let $N(U)$ be the number of particles that fall in~$U$. 
Our first theorem says that the gas is macroscopically hyperuniform in the sense that $N(U)$ has fluctuations of order at most $n^{1/3}\sqrt{\log n}$, instead of $n^{1/2}$ as would be the case for a gas of i.i.d.~particles.
\begin{thm}[Macroscopic hyperuniformity in 3D]\label{macthm}
Let $U$ and $N(U)$ be as above.  Then 
\[
\ee(N(U)) = \leb(U)n
\]
and 
\eq{
\var(N(U))&\le C(U,\beta)n^{2/3}\log n,
}
where  $C(U,\beta)$ is a constant that depends only on $U$ and $\beta$.
\end{thm}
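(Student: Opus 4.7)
The plan rests on two structural observations. First, by summing over pairs that first separate at each dyadic level and Abel-summing,
\[
\sum_{i<j}w(x_i,x_j)=2\binom{n}{2}+\sum_{k\ge 1}2^{k}\sum_{Q\in\text{level }k}\binom{N(Q)}{2},
\]
so the Boltzmann weight factorizes over the dyadic tree, and conditional on the coarser counts the eight children counts $(m_1,\ldots,m_8)$ of any dyadic cube $P$ follow a density proportional to $\binom{N(P)}{m_1,\ldots,m_8}e^{-2\gamma\sum\binom{m_\ell}{2}}\prod_\ell\tilde Z(m_\ell,2\gamma)$ at effective temperature $\gamma=2^{k-1}\beta$, where $\tilde Z(m,\gamma)$ denotes the (rescaled) reduced partition function for $m$ particles at inverse temperature $\gamma$; this law is symmetric in the eight children and independent across different parents. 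Second, the $S_8$ symmetry at every node forces the one-particle marginal to be uniform on $[0,1]^3$ (dyadic permutations act transitively a.e.), and therefore $\ee[N(U)]=\leb(U)\,n$.

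For the variance, I would fix a microscopic cutoff $K$ with $2^K\sim n^{1/3}$, split $N(U)=N(U_K)+N(U\setminus U_K)$ where $U_K$ is the union of level-$K$ dyadic cubes contained in $U$, and decompose the main term along the dyadic tree. Writing $\Delta_{P,\ell}:=N(C_\ell^{(P)})-N(P)/8$ for the parent--child deviations and $U_{P,\ell}:=\leb(U\cap C_\ell^{(P)})/\leb(C_\ell^{(P)})$ for the fractional coverage of each child of $P$ by $U$, telescoping along ancestors yields
\[
N(U_K)-\ee[N(U_K)]=\sum_{k=1}^K\sum_{P\in\text{level }k-1}\sum_{\ell=1}^8 U_{P,\ell}\,\Delta_{P,\ell}.
\]
The symmetry identity $\ee[\Delta_{P,\ell}\mid\text{coarser counts}]=0$ together with the conditional independence of deviations across different parents (from the factorization above) forces all cross-level and cross-parent covariances to vanish, so
\[
\var(N(U_K))=\tfrac{8}{7}\sum_{k=1}^K\sum_{P\in\text{level }k-1}v_{P}\sum_{\ell=1}^8(U_{P,\ell}-\bar U_P)^2,
\]
where $v_P:=\ee[\var(N(C_\ell^{(P)})\mid N(P))]$ and $\bar U_P:=\tfrac18\sum_\ell U_{P,\ell}$.

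Two estimates remain. The geometric factor $\sum_\ell(U_{P,\ell}-\bar U_P)^2\le 8\bar U_P(1-\bar U_P)$ vanishes whenever $P$ lies entirely inside or entirely outside $U$, and the regularity hypothesis on $\partial U$ bounds the number of straddling level-$(k-1)$ cubes by $C\cdot 4^{k-1}$. The step I expect to be the main obstacle is the single-cube variance bound $v_P\lesssim 1/\gamma$ at effective temperature $\gamma=2^{k-1}\beta$, augmented by the trivial $v_P\le m/8$ in the regime $\gamma m\lesssim 1$. To prove this I would establish strong log-concavity of the density of $(m_1,\ldots,m_8)$ on the mean-zero hyperplane, with constant of order $\gamma$, via a Taylor expansion around $m_\ell=m/8$ that uses the asymptotic $\log\tilde Z(m,\gamma)\approx -\gamma m^2/6+O(m)$, obtained by solving the recursion $\log\tilde Z(m,\gamma)=-\gamma m^2/8+8\log\tilde Z(m/8,2\gamma)+O(\log m)$; a Brascamp--Lieb inequality then delivers the claim. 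This forces the partition-function recursion to be carried out with enough care to control its second derivative uniformly across scales, which is the technical heart of the argument.

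Putting the ingredients together, the contribution of level $k$ to $\var(N(U_K))$ is at most $\min(2^{k-1}/\beta,\,n/2^{k-1})$ up to constants, and summing over $k\le K$ gives $O(\sqrt{n/\beta})$. For the boundary zone, $\leb(U\setminus U_K)\lesssim 2^{-K}\sim n^{-1/3}$ by the regularity of $\partial U$, so $\ee[N(U\setminus U_K)]\lesssim n^{2/3}$; applying the hierarchical decomposition above to a dyadic cover of the $2^{-K}$-tubular neighborhood of $\partial U$, together with an i.i.d.-comparison enabled by the repulsiveness of $w$, yields $\var(N(U\setminus U_K))\lesssim n^{2/3}\log n$. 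This dominates the inner contribution and completes the proof.
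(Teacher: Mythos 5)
Your overall architecture — factorize the Gibbs measure over the dyadic tree, telescope $N(U_K)$ into orthogonal parent--child martingale increments, observe that only boundary-straddling cubes contribute (of which there are $O(4^k)$ at level $k$ by regularity), and treat the sub-$n^{-1/3}$ boundary zone separately — is exactly the paper's proof (its martingale $M_j$ and Lemma 2.12 are your telescoping identity). The gap is in the single step you yourself flag as the technical heart: the claim $v_P\lesssim 1/\gamma$ at effective inverse temperature $\gamma=2^{k-1}\beta$, to be derived from strong log-concavity of the children-count distribution via the expansion $\log\tilde Z(m,\gamma)=-c\gamma m^2+O(m)$. That expansion is not just unproven but false as stated: the minimum energy satisfies $L_m=\frac{7}{3}\binom{m}{2}-\Theta(m^{4/3})$ (place the points on a grid of spacing $m^{-1/3}$ and compute level by level), so $\log Z(m,\gamma)\ge -\gamma L_m$ exceeds $-\frac{7\gamma}{3}\binom{m}{2}$ by $\Theta(\gamma m^{4/3})$, and this matters precisely because the effective $\gamma$ doubles at every level of your recursion. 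Your proposed route to the expansion is also circular: replacing the recursion's sum over $(m_1,\ldots,m_8)$ by its balanced term with only $O(\log m)$ error presupposes that the maximizing term is the balanced one, which is the concentration statement you are trying to prove. Finally, log-concavity with modulus $\gamma$ requires two-sided control of \emph{second} differences of $\log Z(m,\gamma)$ in $m$; nothing in your plan (or in the paper) controls these.

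The good news is that the strong bound is unnecessary. The paper proves only the much weaker $v_P\le K(\beta)\,N(P)^{2/3}$ (Lemma 2.6), and this follows from one-sided \emph{ratio} bounds $e^{-7\beta m/3}\le Z(m+1,\beta)/Z(m,\beta)\le e^{-7\beta m/3+C\beta m^{1/3}}$, which need only Jensen's inequality and the two-sided bound on $L_m$; the $\beta m^{1/3}$ error in the exponent is exactly what limits the concentration of the children counts to a window of width $m^{1/3}$ rather than $\gamma^{-1/2}$. Substituting $v_P\lesssim (n8^{-(k-1)})^{2/3}$ into your level-$k$ sum gives $4^{k-1}\cdot n^{2/3}4^{-(k-1)}=n^{2/3}$ per level, hence $n^{2/3}\log n$ over the $\sim\frac13\log_2 n$ levels — the stated bound, now carried by the interior sum rather than by the boundary zone (which contributes only $O(n^{2/3})$, by conditioning on the level-$K$ counts and using $\ee(N(D)^2)=O(1)$ for level-$K$ cubes; your appeal to "repulsiveness" and an i.i.d.\ comparison is not needed). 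So to repair the proof, replace your log-concavity lemma by the ratio-based concentration estimate and accept the $m^{2/3}$ single-cube variance; as written, the key lemma is unsupported and its proposed proof cannot work.
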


The next theorem shows that when $\partial U$ is smooth, $n^{1/3}$ is actually the correct order of fluctuations of $N(U)$, up to possible logarithmic corrections. 
\begin{thm}[Lower bound in 3D]\label{lowthm}
Let $U$ be a nonempty connected open subset of $[0,1]^3$ whose boundary is a smooth,  closed, orientable surface. Let $N(U)$ be as in Theorem~\ref{macthm}. Then $N(U)$ has fluctuations of order at least $n^{1/3}$, in the sense that there are three constants $n_0\ge 1$, $c_1>0$ and $c_2<1$, depending only on $U$ and $\beta$, such that for any $n\ge n_0$ and any $-\infty<a\le b<\infty$ with $b-a\le c_1n^{1/3}$, we have $\pp(a\le N(U)\le b) \le c_2$.
\end{thm}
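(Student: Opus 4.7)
The $n^{1/3}$ scale is explained heuristically by the following observation: at length scale $r = n^{-1/3}$, each dyadic cube contains $\Theta(1)$ particles, and the surface $\partial U$ intersects roughly $n^{2/3}$ such cubes, each contributing an approximately independent Bernoulli fluctuation to $N(U)$ --- giving a total fluctuation of order $\sqrt{n^{2/3}} = n^{1/3}$. The plan is to make this rigorous by conditioning to expose independence and then applying a Kolmogorov--Rogozin concentration function inequality.

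Fix $k^* = \lfloor \tfrac{1}{3}\log_2 n\rfloor$, and let $\mg$ be the $\sigma$-algebra generated by the particle counts $m_Q$ in each level-$k^*$ dyadic cube $Q$. The key structural fact, special to the hierarchical gas, is that the interaction $w(x_i,x_j)$ between two particles lying in distinct level-$k^*$ cubes depends only on which cubes they occupy. Consequently, conditional on $\mg$, the particle configurations in different cubes are independent, and within each cube $Q$ the $m_Q$ particles form a rescaled hierarchical gas at inverse temperature $\beta\,2^{k^*}$; in particular, if $m_Q = 1$ the sole particle is uniformly distributed on $Q$. Writing $N(U) = M + \sum_Q Y_Q$ with $M$ the $\mg$-measurable count from cubes contained in $U$ and $Y_Q$ the number of particles of $Q$ lying in $U$, the $Y_Q$ are conditionally independent given $\mg$, and each well-cut cube $Q$ with $m_Q = 1$ contributes a $\textup{Bernoulli}(p_Q)$ summand with $p_Q = \vol(U \cap Q)/\vol(Q)$.

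Three ingredients then complete the argument. \emph{(i) Geometry:} smoothness of $\partial U$ implies that at least $c_U n^{2/3}$ of the level-$k^*$ cubes are ``well-cut'' in the sense $p_Q \in [q_0, 1-q_0]$ for a fixed $q_0 > 0$, since at scale $n^{-1/3}$ the surface is approximately planar inside each cube and typically cuts it into two pieces of comparable volume. \emph{(ii) Count of single-particle cubes:} with probability bounded away from zero, at least $c n^{2/3}$ of these well-cut cubes additionally satisfy $m_Q = 1$; the per-cube bound $\pp(m_Q = 1) \ge c' > 0$ uses $\ee m_Q = \Theta(1)$ together with a variance bound on $m_Q$ obtained from Theorem~\ref{macthm}, and the global statement follows from a Paley--Zygmund argument that controls the covariances $\cov(\ii[m_Q = 1], \ii[m_{Q'} = 1])$ between distinct cubes. \emph{(iii) Anti-concentration:} the Kolmogorov--Rogozin inequality, applied conditionally on $\mg$ to the independent summands $Y_Q$, gives on this good event
\[
\sup_k \pp\bigl(N(U) = k \,\bigm|\, \mg\bigr) \le \frac{C}{\sqrt{c\,n^{2/3} q_0(1-q_0)}} = O(n^{-1/3}).
\]
Hence any interval $[a,b]$ with $b - a \le c_1 n^{1/3}$ has conditional probability at most $C' c_1$ on the good event; averaging over $\mg$ and choosing $c_1$ small enough yields the desired $c_2 < 1$.

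The main technical obstacle is ingredient (ii): the per-cube bound $\pp(m_Q = 1) \ge c'$ is straightforward, but promoting it to a simultaneous lower bound on $|\{Q \text{ well-cut}, m_Q = 1\}|$ requires nontrivial covariance estimates between counts in distinct level-$k^*$ cubes, which should be accessible by applying Theorem~\ref{macthm} at intermediate scales but is the most delicate step of the argument.
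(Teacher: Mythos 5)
Your overall architecture --- condition on the level-$k^*$ cube counts, exploit the conditional independence across cubes, isolate many well-cut cubes containing exactly one particle, and finish with an anti-concentration inequality for sums of independent Bernoullis --- is the same as the paper's. But there is a genuine gap at the step you dismiss as straightforward: the per-cube bound $\pp(m_Q=1)\ge c'>0$. First and second moment information on $m_Q$ (which is all that $\ee(m_Q)=\Theta(1)$ plus a variance bound provide) cannot yield a lower bound on $\pp(m_Q=1)$: a count supported on $\{0,2,4,\dots\}$ can have exactly the right mean and variance. What Paley--Zygmund plus the moment bounds actually give is $\pp(1\le m_Q\le m)\ge c'$ for a suitable constant $m=m(\beta)$, i.e.\ control on the event that $Q$ is occupied but not overcrowded --- and this is all the paper extracts at the scale $2^{-k^*}$.

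The missing ingredient is the mechanism that converts ``between $1$ and $m$ particles'' into ``exactly one particle,'' and it is energetic, not probabilistic: the paper passes to a slightly finer scale $j$ with $j-k^*\le C(\beta)$, chosen so that $2^{j-k^*+1}\ge \tfrac73\binom{m}{2}+1$. Conditionally on $\mf_{k^*}$, the configuration inside a level-$k^*$ cube with at most $m$ particles is a rescaled hierarchical gas at inverse temperature $2^{k^*}\beta\approx\beta n^{1/3}$, and Lemma~\ref{repulsion} shows that the probability that some level-$j$ descendant contains two or more particles is then at most $e^{-\beta n^{1/3}}$. A union bound over all such descendants produces, with probability bounded below, order $n^{2/3}$ well-cut level-$j$ cubes each containing exactly one (conditionally uniform) particle, to which Berry--Esseen (your Kolmogorov--Rogozin) applies; without this repulsion step your good event may be empty. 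A secondary, smaller issue: the claim that $\partial U$ ``typically cuts'' a dyadic cube into comparable pieces also needs an argument (a surface locally parallel to a dyadic face cuts nearby cubes very unevenly); the paper handles this by locating a boundary point whose normal has all coordinates bounded away from zero and running the lattice-geometry Lemmas~\ref{intlmm}--\ref{k1lmm} there. Your covariance worry in (ii), by contrast, is resolved exactly as you suggest: apply the variance upper bound (Theorem~\ref{mainthm}) to the union of the selected cubes and use Paley--Zygmund.
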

Incidentally, the $n^{1/3}$ order of fluctuations matches a well-known prediction from physics~\cite{lebowitz83, martinyalcin80, jancovicietal93} for the 3D Coulomb gas model (see also~\cite{nsv08}). The $1/3$ exponent is also reminiscent of a famous classical result~\cite{beck87} about irregularities in distributions of arbitrary sequences of points in Euclidean space.

Let us now turn our attention to hyperuniformity in the microscopic scale. Take any point $x\in (0,1)^3$. Blow up the neighborhood of $x$ by a factor of $n^{1/3}$ by applying the blow-up map $y\mapsto n^{1/3}(y-x)$ to the points in our interacting gas.  Since the original process had an expected density of $n$ particles per unit volume, the new process has an expected density of one particle per unit volume. Studying the blown up process is  the standard way of investigating the local behavior of interacting gases~\cite{serfaty15}.

Let $U$ be a nonempty  open subset of $\rr^3$ whose boundary is regular in the sense defined above. For each $\lambda >0$, let $\lambda U$ denote the set $\{\lambda y:y\in U\}$, and let $N_x(\lambda U)$ be the number of points from the blown up process that land in $\lambda U$. The following theorem shows that for $\lambda \gg 1$, $N_x(\lambda U)$ has fluctuations of order at most $\lambda \sqrt{\log \lambda}$. This is smaller than $\lambda^{3/2}$, the corresponding order of fluctuations for a Poisson point process. This proves the hyperuniformity of our interacting gas at the microscopic scale. 
\begin{thm}[Microscopic hyperuniformity in 3D]\label{micthm}
Let $U$ and  $N_x(\lambda U)$ be as above.  Then for any $\lambda$ such that $\diam(\lambda U)\ge 1$,
\[
\lim_{n\to \infty} \ee(N_x(\lambda U)) = \leb(\lambda U) = \lambda^3\leb(U),
\]
and 
\eq{
\limsup_{n\to \infty} \var(N_x(\lambda U))&\le C(U, \beta)\lambda^2\log (4\lambda\diam(U)),
}
where $C(U, \beta)$ is a constant that depends only on $U$ and $\beta$.
\end{thm}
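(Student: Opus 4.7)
The plan is to reduce Theorem~\ref{micthm} to the scale-by-scale variance estimates underlying the proof of Theorem~\ref{macthm} by a simple change of variables. Setting $V:=x+n^{-1/3}\lambda U$, one has $N_x(\lambda U)=N(V)$ identically. Since $x\in(0,1)^3$ and $\diam(V)=n^{-1/3}\lambda\,\diam(U)\to 0$, the set $V$ lies in $[0,1]^3$ once $n$ is sufficiently large, and rescaling~\eqref{regeq} shows that $\partial V$ is regular with constant $Cn^{-2/3}\lambda^2$. The expectation is then immediate from Theorem~\ref{macthm}: for such $n$, $\ee(N(V))=n\,\leb(V)=\lambda^3\leb(U)=\leb(\lambda U)$, which is the claimed limit.

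For the variance I would extract the quantitative form of the bound produced by the proof of Theorem~\ref{macthm}. Written for a generic set, the natural estimate should read
\[
\var(N(V))\le C(\beta)\sum_{k}\bigl|\{Q\in\mt_k:Q\cap\partial V\neq\emptyset\}\bigr|\cdot\ee(N_Q)^{2/3},
\]
the sum running over the scales at which the dyadic grid resolves $\partial V$. This is the outcome of the orthogonal martingale decomposition
\[
N(V)=\ee N(V)+\sum_{k\ge 1}(M_k-M_{k-1}),\qquad M_k:=\ee(N(V)\mid\mf_k),
\]
where $\mf_k:=\sigma(\{N_Q:Q\in\mt_k\})$. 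By the full dyadic symmetry of the model, the one-point marginal of a particle conditioned on all level-$k$ counts is uniform on its assigned cube, so $M_k=\sum_{Q\in\mt_k}(\leb(V\cap Q)/\leb(Q))N_Q$, and the increment at scale $k$ is supported on level-$(k-1)$ cubes straddling $\partial V$. The variance of each such increment is absorbed by the basic one-step hyperuniformity estimate
\[
\var(N_{Q'}\mid N_Q)\le C(\beta)\,N_Q^{2/3}\qquad\text{for every child $Q'$ of $Q$,}
\]
which is the key lemma behind Theorem~\ref{macthm}.

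Given that estimate, the microscopic bound is pure bookkeeping. At scale $k$ a dyadic cube has mean count $n\cdot 8^{-k}$, hence $\ee(N_Q)^{2/3}=n^{2/3}\cdot 4^{-k}$. By regularity of $\partial V$, the $2^{-k}\sqrt 3$-neighborhood of $\partial V$ has volume at most $Cn^{-2/3}\lambda^2\cdot 2^{-k}$, so the number of boundary cubes at scale $k$ is at most $Cn^{-2/3}\lambda^2\cdot 4^k$. The two factors cancel exactly and each scale contributes $O(\lambda^2)$. The coarsest relevant scale satisfies $2^{-k_{\min}}\asymp\diam(V)=n^{-1/3}\lambda\,\diam(U)$, and the finest $k_{\max}$ has $2^{-k_{\max}}\asymp n^{-1/3}$ (past which mean counts per cube are $O(1)$ and the residual contribution from a single particle moving inside a microscopic cube forms a geometrically summable tail). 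Hence the effective range has length $\asymp\log_2(\lambda\,\diam(U))$, yielding $\var(N(V))\le C(U,\beta)\lambda^2\log(4\lambda\,\diam(U))$ uniformly in $n$, which gives the theorem after taking $\limsup_n$.

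The hard part is establishing the one-step variance bound $\var(N_{Q'}\mid N_Q)\le CN_Q^{2/3}$ with the sharp $2/3$ exponent. The conditional distribution of $(N_{Q_1},\dots,N_{Q_8})$ given $N_Q=m$ is of tilted-multinomial type, proportional to $\binom{m}{n_1,\dots,n_8}\exp(\beta\cdot 2^{k+1}\sum_i\binom{n_i}{2})\prod_i Z_{k+1}(n_i)$, and the competition between the convex tilt (favoring concentration at scale $k+1$) and the recursively defined partition functions $Z_{k+1}$ (penalizing concentration at finer scales) is precisely what produces cube-root fluctuations rather than the Gaussian $m^{1/2}$ fluctuations of pure multinomial noise. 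Resolving this balance uniformly across scales requires a recursive concentration argument exploiting the convexity of the interaction. Once the one-step bound is in hand — and it must be, since it also drives Theorem~\ref{macthm} and sets the $1/3$ exponent in Theorem~\ref{lowthm} — the rest of the plan reduces to the scale-by-scale counting above.
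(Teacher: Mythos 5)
Your proposal is correct and takes essentially the same route as the paper: the paper likewise sets $V=x+n^{-1/3}\lambda U$, records the scalings $A(V)=\lambda^2n^{-2/3}A(U)$, $\leb(V)=\lambda^3n^{-1}\leb(U)$, $\diam(V)=\lambda n^{-1/3}\diam(U)$, and invokes the all-scales bound of Theorem~\ref{mainthm}, whose proof is exactly your martingale decomposition over dyadic scales driven by the one-step conditional variance estimate $\var(N(D')\mid\mf_k)\le K(\beta)N(D)^{2/3}$ (Lemmas~\ref{varlmm}, \ref{condcalc}, \ref{mjvar}). The only cosmetic differences are that the paper truncates the martingale at the scale $2^{-k}\asymp n^{-1/3}$ and bounds $\ee(\var(N(V)\mid\mf_k))$ directly rather than via a ``geometrically summable tail,'' and it explicitly accounts for the base-scale term $\var(M_l)\le C(\beta)\leb(V)^{2/3}n^{2/3}=O(\lambda^2)$, which your sketch leaves implicit.
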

Theorems \ref{macthm} and \ref{micthm} are both special cases of a more general theorem (Theorem \ref{mainthm} in Section \ref{proof3d}), which gives hyperuniformity at all scales.  

Finally, let us consider linear statistics. Any function $f:[0,1]^3\to \rr$ defines a linear statistic
\eeq{
X(f) := \sum_{i=1}^n f(X_i),\label{lindef}
}
where $X_1,\ldots, X_n$ is a realization of our point process. In particular, $N(U)$ is a linear statistic, with $f$ being the indicator function of $U$. We have the following two theorems about fluctuations of linear statistics when $f$ is continuous. The results are not as definitive as the other results of this section, since the upper and lower bounds do not match. 

If $f$ is Lipschitz, we get the following slight improvement of the bound given in Theorem \ref{macthm}.
\begin{thm}[Upper bound for linear statistics in 3D]\label{linearthm}
Suppose that $f:[0,1]^3\to \rr$ is a Lipschitz function with Lipschitz constant $L$. Let $X_1,\ldots,X_n$ be a realization of points from our model in dimension two. Let $X(f)$ be the linear statistic defined in \eqref{lindef}. Then
\[
\var(X(f))\le C(\beta)L^2n^{2/3},
\]
where $C(\beta)$ is a constant that depends only on $\beta$. 
\end{thm}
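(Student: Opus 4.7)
The plan is to exploit the hierarchical structure of the potential through a multi-scale dyadic decomposition of $f$. For each $k\ge 0$, let $\mathcal{V}_k$ denote the collection of $8^k$ level-$k$ dyadic subcubes of $[0,1]^3$, and let $\bar f_k(x)$ be the average of $f$ over the unique cube in $\mathcal{V}_k$ containing $x$. Set $g_k := \bar f_k - \bar f_{k-1}$ for $k\ge 1$; then $X(\bar f_0) = n\int f\,d\leb$ is deterministic, $g_k$ is constant on each $Q\in\mathcal{V}_k$ with the zero-sum identity $\sum_{Q\in\mathrm{children}(Q')} g_k(Q) = 0$ on every $Q'\in\mathcal{V}_{k-1}$, and the Lipschitz assumption gives $\|g_k\|_\infty\le 2\sqrt{3}\,L\cdot 2^{-k}$. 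By uniform continuity, $X(f) = X(\bar f_0) + \sum_{k\ge 1} X(g_k)$.

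The decisive feature of the hierarchical potential is that inter-cube interactions depend only on cube counts: whenever $x\in Q'$ and $y\notin Q'$ for $Q'\in\mathcal{V}_{k-1}$, the value $w(x,y)$ is determined by which other level-$(k-1)$ cube contains $y$. Therefore, conditional on the count vector $(N(R))_{R\in\mathcal{V}_{k-1}}$, the configurations inside different level-$(k-1)$ cubes are independent, and the law inside $Q'$ is a hierarchical Coulomb gas on $Q'$ with $m := N(Q')$ particles. Combined with the dyadic-cube symmetry $\ee[N(Q) \mid \cdots] = m/8$ for each child of $Q'$ and the zero-sum property, this forces $\ee[\xi_{Q'}\mid (N(R))_{R\in\mathcal{V}_{k-1}}] = 0$, where $\xi_{Q'} := \sum_{Q\in\mathrm{children}(Q')} g_k(Q) N(Q)$. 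A tower argument then shows $\cov(X(g_k), X(g_l)) = 0$ for $k\neq l$, since $X(g_k)$ is measurable with respect to the $\sigma$-algebra of level-$k$ counts, which is contained in that of level-$(l-1)$ counts. Consequently $\var(X(f)) = \sum_{k\ge 1}\var(X(g_k))$.

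To bound each term, use the conditional independence across $\mathcal{V}_{k-1}$ together with a log-free dyadic variance estimate $\var(N(Q))\le C(\beta)|Q|^{2/3}n^{2/3}$ (the interior bound used to derive Theorem~\ref{macthm}, transplanted to the rescaled gas inside each parent). This yields $\var(\xi_{Q'}\mid\cdots)\le C(\beta)\|g_k\|_\infty^2\,m^{2/3}$. Taking expectation, invoking concavity of $x\mapsto x^{2/3}$ with Jensen, $|\mathcal{V}_{k-1}| = 8^{k-1}$, and $\sum_{Q'}\ee N(Q') = n$,
\[
\ee\sum_{Q'\in\mathcal{V}_{k-1}} N(Q')^{2/3} \le 8^{k-1}\bigl(n/8^{k-1}\bigr)^{2/3} = 2^{k-1}n^{2/3}.
\]
Combining with $\|g_k\|_\infty^2\le CL^2 2^{-2k}$ gives $\var(X(g_k)) \le C(\beta)L^2 2^{-k}n^{2/3}$, and summing the geometric series in $k$ produces the bound of Theorem~\ref{linearthm}.

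The principal obstacle is securing the log-free dyadic variance bound $\var(N(Q))\le C(\beta)|Q|^{2/3}n^{2/3}$ uniformly across the rescalings arising inside level-$(k-1)$ parents. The $\log n$ factor present in Theorem~\ref{macthm} is traceable to approximating an arbitrary region $U$ by dyadic cubes; no such approximation is required here, so the logarithm should be avoidable by a direct induction on scales tailored to dyadic regions. Uniformity in the effective inverse temperature $\beta\cdot 2^{k-1}$ created by rescaling a parent cube to $[0,1]^3$ also needs verification, but is plausible because stronger repulsion at larger $\beta$ only enhances rigidity.
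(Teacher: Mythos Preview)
Your approach is essentially the same as the paper's: both decompose $f$ into level-$k$ dyadic averages, recognize that $W_k := X(\bar f_k)$ is a martingale with respect to the filtration generated by level-$k$ counts, and bound each martingale increment using the conditional variance estimate $\var(N(Q)\mid \mf_{k-1})\le K(\beta)N(Q')^{2/3}$ for a child $Q$ of $Q'$. The two concerns you flag at the end are precisely what the paper's Lemma~\ref{varlmm} and Lemma~\ref{condcalc} settle: the single-scale bound $\var(N_i)\le K(\beta)n^{2/3}$ holds with $K$ a \emph{non-increasing} function of $\beta$, so the rescaling $\beta\mapsto 2^{k-1}\beta$ only helps, and no logarithm appears because the bound is for a single dyadic child, not an approximation of an irregular set.

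One minor structural difference: the paper truncates the martingale at the scale $2^{-k}\asymp n^{-1/3}$ and handles the remainder $\ee\var(X(f)\mid\mf_k)$ directly via $|s(D)-f(D)N(D)|\le \sqrt{3}L2^{-k}N(D)$, whereas you sum the full geometric series $\sum_{k\ge 1}2^{-k}$. Your route is cleaner and equally valid, since the series converges and $X(\bar f_K)\to X(f)$ by the Lipschitz property; the paper's truncation is not essential here. A small point of phrasing: what you actually need and use is the \emph{conditional} bound $\var(N(Q)\mid\cdots)\le K(\beta)m^{2/3}$ from Lemma~\ref{condcalc}, not the unconditional $\var(N(Q))\le C(\beta)|Q|^{2/3}n^{2/3}$ you wrote; the former is what feeds directly into $\var(\xi_{Q'}\mid\cdots)\le C(\beta)\|g_k\|_\infty^2 m^{2/3}$ after Cauchy--Schwarz on the $8\times 8$ covariances.
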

The next theorem gives a lower bound   of order $n^{1/6}$ on the order of fluctuations of $X(f)$ when $f$ is a non-constant linear function.  This does not match the upper bound from Theorem~\ref{linearthm}, but is nonetheless growing polynomially in $n$, deviating from the $O(1)$ rate for smooth linear statistics in dimensions one and two~\cite{costinlebowitz95, diaconisevans01, wieand02, bey12, bey14, bey14b, johansson98, bbny15, bbny16, lebleserfaty16, taovu13, byy14a, byy14b}. 
\begin{thm}[Lower bound for linear statistics in 3D]\label{linlowthm}
Let $f:[0,1]^3\to \rr$ be a non-constant linear function, and let $X(f)$ be as in \eqref{lindef}. Then $X(f)$ has fluctuations of order at least $n^{1/6}$, in the sense that there are three constants $n_0\ge 1$, $c_1>0$ and $c_2<1$, depending only on $U$ and $\beta$, such that for any $n\ge n_0$ and any $-\infty<a\le b<\infty$ with $b-a\le c_1n^{1/6}$, we have $\pp(a\le X(f)\le b) \le c_2$.
\end{thm}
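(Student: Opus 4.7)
My plan is to mimic the microscopic-scale decomposition that underlies Theorem \ref{lowthm}, now applied to the linear statistic $X(f)$. Write $f(x) = v \cdot x + c$ with $v \neq 0$, and set $K := \lceil \log_8 n \rceil$, so the $8^K$ level-$K$ dyadic cubes have side $2^{-K} \asymp n^{-1/3}$ and expected count $\ee n_C \in [\tfrac{1}{8}, 1]$. The key structural observation is that, because the hierarchical potential $w(x,y)$ depends only on the level at which $x$ and $y$ first belong to different dyadic cubes, a lone point in its level-$K$ cube $C$ (i.e.\ a cube with $n_C = 1$) has conditional distribution that is uniform on $C$: every one of its interactions with the other $n-1$ points is unchanged by moving it within $C$.

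Conditional on the full count profile $(n_C)_C$ at level $K$, I decompose
\[
X(f) = A + Y_1 + Y_2,
\]
where $A := \sum_C f(c_C) n_C$ is a function of the counts alone, $Y_1 := v \cdot \sum_{C \in \ms}(X_C - c_C)$ collects the contribution from the set of singleton cubes $\ms := \{C : n_C = 1\}$, and $Y_2$ gathers the residuals from the remaining cubes. By the factorization of the conditional law across cubes, $Y_1$ and $Y_2$ are conditionally independent given the counts, and the $|\ms|$ summands of $Y_1$ are independent centered random variables, each the projection of a uniform on a cube of side $2^{-K}$ onto the direction $v$. A standard Fourier / sinc computation then yields the conditional density bound
\[
\sup_y \rho_{Y_1}(y \mid \text{counts}) \le \frac{C(f)}{2^{-K}\sqrt{|\ms|}},
\]
hence $\pp(X(f) \in I \mid \text{counts}) \le C(f)|I|/(2^{-K}\sqrt{|\ms|})$. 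On the event $\{|\ms| \ge cn\}$, this becomes $\le C'(f)|I|/n^{1/6}$, which is at most $c_2 < 1$ whenever $|I| \le c_1 n^{1/6}$ for $c_1$ small enough.

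The main technical obstacle is therefore to show that $|\ms| \ge cn$ with probability at least $1 - \epsilon$ for some $c > 0$ and $\epsilon < 1$ (both depending on $\beta$). Since $n - |\ms| = \sum_{C : n_C \ge 2} n_C \le \sum_C n_C(n_C - 1)$, this reduces, via exchangeability, to
\[
\ee \sum_C n_C(n_C - 1) = n(n-1)\, \pp(X_1, X_2 \text{ in the same level-}K \text{ cube}) = o(n).
\]
On the event inside the probability, $w(X_1, X_2) \ge 2^{K+1} \asymp n^{1/3}$, so the Gibbs density acquires a factor $e^{-\beta 2^{K+1}}$ relative to the comparison configuration in which $X_1, X_2$ are placed in distinct level-$1$ cubes. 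Turning this heuristic suppression into a rigorous bound of the form $\pp(X_1, X_2 \text{ in same level-}K \text{ cube}) \lesssim e^{-c\beta n^{1/3}}/n$ requires comparing a restricted partition function to $Z(n,\beta)$, and is the delicate part; it is the same type of energy / pair-swap comparison that should underpin the proofs of Theorems \ref{macthm} and \ref{lowthm}. Once this lemma is in hand, Markov's inequality gives $\pp(|\ms| < cn) \le \epsilon$, and combining with the density bound above yields $\pp(X(f) \in I) \le \epsilon + C''(f) c_1 < 1$ for $|I| \le c_1 n^{1/6}$ with $c_1$ sufficiently small, completing the proof.
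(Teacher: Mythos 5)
Your overall architecture is the same as the paper's: work at the dyadic scale $2^{-K}\asymp n^{-1/3}$, isolate the cubes containing exactly one point, observe that conditionally on the count profile such a point is uniform on its cube and the contributions of distinct cubes are independent (this is Lemma \ref{condlmm}), and then apply an anti-concentration estimate to the sum of $\gtrsim n$ independent uniform projections, each of scale $n^{-1/3}$, to get a density bound of order $n^{1/3}/\sqrt{n}=n^{-1/6}$. That part of your argument is correct (the paper uses Berry--Esseen where you use a sinc/density bound; either works), and your decomposition $X(f)=A+Y_1+Y_2$ with $Y_1,Y_2$ conditionally independent is legitimate.

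The genuine gap is the step you yourself flag as ``the delicate part,'' and it cannot be closed by the route you sketch. You want $\ee\sum_C n_C(n_C-1)=n(n-1)\,\pp(X_1,X_2\text{ in the same level-}K\text{ cube})=o(n)$ via a comparison of a restricted partition function with $Z(n,\beta)$. The only unconditional tools available are the Jensen bound $Z(n,\beta)\ge e^{-\frac{7\beta}{3}\binom{n}{2}}$ and the energy lower bound $H_n\ge L_n\ge \frac{7}{3}\binom{n}{2}-C_1n^{4/3}$ (Theorem \ref{lnlowthm}); combining them, the restricted integral over configurations with $X_1,X_2$ in a fixed level-$K$ cube is controlled only up to a factor $e^{C\beta n^{4/3}}$, which completely swamps the energy gain $e^{-2\beta 2^{K}}\approx e^{-c\beta n^{1/3}}$ from the clustered pair. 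This is exactly why the paper does \emph{not} attempt a global estimate: it first uses Paley--Zygmund and the second-moment bound of Lemma \ref{expvarlmm} to show that, with probability at least a constant, a positive fraction of level-$k$ cubes carry between $1$ and $m=O_\beta(1)$ points; then, conditioning on $\mf_k$ and invoking the scale invariance of Lemma \ref{condlmm}, each such cube becomes an $O(1)$-particle system at inverse temperature $2^k\beta\asymp \beta n^{1/3}$, where Lemma \ref{repulsion} \emph{does} give an effective repulsion bound $e^{-\beta n^{1/3}}$ --- but only after descending a further $O_\beta(1)$ dyadic levels to $j$ with $2^{j-k+1}\ge \frac{7}{3}\binom{m}{2}+1$, so the singleton cubes live at level $j$, not $K$. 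Note also that your target is stronger than necessary and stronger than what these methods yield: the conditional argument gives $\pp(n_{D'}\ge 2)\le \pp(n_D>m)+e^{-c\beta n^{1/3}}$, and the first term is a small constant (controlled by $\ee n_D^2/m^2$), not $o(1)$; correspondingly the paper only obtains $|\mc^*|\ge K_5(\beta)n$ with probability $\ge 1/14$, which suffices for the conclusion $\pp(X(f)\in I)\le c_2<1$. To repair your proof, replace the $o(n)$ claim by this positive-probability statement and prove it by the two-level conditional argument; the rest of your argument then goes through with the singletons taken at level $j=K+O_\beta(1)$, which changes the anti-concentration bound only by a constant.
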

It is not clear whether $n^{1/3}$ or $n^{1/6}$ is the correct order of fluctuations for smooth linear statistics. Theorem \ref{lowthm} does not provide any strong evidence in favor of $n^{1/3}$, because, as we will see later for the 2D hierarchical Coulomb gas, linear statistics of smooth functions may have much smaller fluctuations than linear statistics of indicator functions. However, there is a  recent result~\cite{bardenethardy16} which shows that $n^{1/3}$ is the correct order of fluctuations for smooth linear statistics of a 3D orthogonal polynomial ensemble. Although orthogonal polynomial ensembles are not related to Coulomb type systems in dimension three, this gives some support in favor of $n^{1/3}$.

\subsection{Results in 2D and 1D}\label{results2d}
In dimension two, we will modify $w$ to mimic the logarithmic potential of the 2D Coulomb gas. This is done by declaring $w(x,y) = $ the minimum $k$ such that $x$ and $y$ belong to distinct dyadic sub-squares of $[0,1]^2$ of side-length $2^{-k}$. We will use the same formula in dimension one as well (with dyadic intervals instead of squares), so that $w$ mimics the logarithmic potential of 1D log gases.  With these modifications, we have the following analogs of Theorem \ref{macthm}. With $N(U)$ as in Theorem \ref{macthm}, it says that $N(U)$ has fluctuations of order at most $n^{1/4}\log n$ in dimension two, and $\log n$ in dimension one. 
\begin{thm}[Macroscopic hyperuniformity in 2D and 1D]\label{macthm2d}
Consider the model defined above in dimension $d=1$ or $2$. Let $U$ and $N(U)$ be as in Theorem~\ref{macthm}. Then
\[
\ee(N(U))=\leb(U)n
\]
and 
\[
\var(N(U))\le C(U,\beta) n^{(d-1)/d} (\log n)^2, 
\]
where $C(U,\beta)$ is a constant that depends only on $U$ and $\beta$.
\end{thm}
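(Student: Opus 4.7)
My plan is to adapt the dyadic-martingale argument used for Theorem~\ref{macthm} in dimension three to the additive hierarchical potential of dimensions one and two. The expectation is immediate: the density \eqref{densdef} is invariant under any permutation of dyadic sub-cubes sharing a common level, so the marginal of each particle is uniform on $[0,1]^d$ and $\ee(N(U))=n\,\leb(U)$.

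For the variance, let $\mt_k$ denote the collection of dyadic sub-cubes of $[0,1]^d$ of side-length $2^{-k}$, and let $\mf_k$ be the $\sigma$-algebra generated by $\{N_Q : Q\in\mt_j,\ 0\le j\le k\}$. Because in dimensions $d=1,2$ the energy decomposes as $\sum_{i<j}w(x_i,x_j)=\sum_{k\ge 0}\sum_{Q\in\mt_k}\binom{N_Q}{2}$, integrating out positions within each cube shows that, conditional on $\mf_k$, the child-count vectors across distinct parents $Q\in\mt_k$ are independent, and across the $2^d$ children of a single parent exchangeable. Regularity of $\partial U$ forces $\leb(\partial U)=0$, so $N(U)$ is $\mf_\infty$-measurable almost surely and
\[
\var(N(U)) = \sum_{k=0}^{\infty}\ee[\Delta_k^2], \qquad \Delta_k = \ee[N(U)\mid\mf_{k+1}] - \ee[N(U)\mid\mf_k].
\]
Setting $p_Q=\leb(U\cap Q)/\leb(Q)$, a direct computation gives $\Delta_k=\sum_Q D_Q$ with $D_Q=\sum_{Q'\subset Q}(N_{Q'}-N_Q/2^d)(p_{Q'}-p_Q)$, the outer sum restricted to $Q\in\mt_k$ that straddle $\partial U$ (interior and exterior cubes contribute $0$). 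Conditional independence across parents and exchangeability across children collapse the $\Delta_k$ estimate to
\[
\ee[\Delta_k^2] \le C_d\sum_{Q\in\mt_k\text{ boundary}} \ee[\sigma_d^2(N_Q)],
\]
where $\sigma_d^2(m):=\var(N_{Q_1}\mid N_Q=m)$ for $Q_1$ any fixed child of $Q$.

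The main obstacle is the one-level bound
\[
\sigma_d^2(m)\le C(\beta)\,m^{(d-1)/d}\log(m+2),
\]
that is, $\sigma_1^2(m)\lesssim \log m$ and $\sigma_2^2(m)\lesssim m^{1/2}\log m$. The conditional law is $\pp(n_1,\dots,n_{2^d}\mid m)\propto \binom{m}{n_1,\dots,n_{2^d}}\prod_{r}Z_d(n_r)$, where $Z_d$ satisfies the recursion $Z_d(m)=e^{-\beta\binom{m}{2}}2^{-dm}\sum_{n_1+\cdots+n_{2^d}=m}\binom{m}{n_1,\dots,n_{2^d}}\prod_r Z_d(n_r)$. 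I would prove two-sided estimates of the form $\log Z_d(n) = -\tfrac12\beta c_d n^2 + O(n\log n)$ by induction on depth, then show that the effective free energy around $n_r=m/2^d$ is quadratic with Hessian of order $\beta$. Because the pair interaction $w=k$ grows only linearly in scale (in contrast with $w=2^k$ in 3D, which amplifies the Hessian by $2^{\text{depth}}$), the restoring force on the child counts is much weaker and a careful tracking of subleading corrections to $\log Z_d$ through the recursion produces the extra $\log m$.

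To finish, regularity of $\partial U$ provides at most $C\,2^{(d-1)k}$ boundary cubes at level $k$. Concavity of $x\mapsto x^{(d-1)/d}\log(x+2)$ and $\ee N_Q=n/2^{dk}$ give $\ee[\sigma_d^2(N_Q)]\le C(\beta)(n/2^{dk})^{(d-1)/d}\log(n/2^{dk}+2)$. Splitting at $K=\lceil d^{-1}\log_2 n\rceil$, the levels $k\le K$ contribute
\[
\sum_{k=0}^K 2^{(d-1)k}(n/2^{dk})^{(d-1)/d}\log(n/2^{dk}+2) \le C\,n^{(d-1)/d}(\log n)\,K \le C'\,n^{(d-1)/d}(\log n)^2,
\]
while for $k>K$ the trivial bound $\sigma_d^2(m)\le m$ yields a geometric tail of total mass $O(n^{(d-1)/d})$. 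Combining gives $\var(N(U))\le C(U,\beta)\,n^{(d-1)/d}(\log n)^2$, as required.
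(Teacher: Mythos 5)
Your overall architecture is the right one and matches the paper's: a martingale over dyadic scales, conditional independence across parents (the analogue of Lemma \ref{condlmm2d}), the counting of boundary cubes via \eqref{regeq}, Jensen's inequality to pass from $\ee[\sigma_d^2(N_Q)]$ to a function of $\ee N_Q$, and a cut at the microscopic scale $2^{-K}\asymp n^{-1/d}$. The genuine gap is in the one step you defer, the one-level bound $\sigma_d^2(m)$, which is the technical heart of the theorem, and in how the bound you assert for it feeds into your sum over scales. First, the integrated asymptotic $\log Z_d(n)=-\tfrac12\beta c_d n^2+O(n\log n)$ is useless for your purpose: to control $\pp(m_1+k_1,\dots)/\pp(m_1,\dots)$ for $|k_r|\ll m$ you must compare $\prod_r Z_d(m_r+k_r)$ with $\prod_r Z_d(m_r)$, and an $O(n\log n)$ error in $\log Z_d$ swamps everything. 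What is needed is the two-sided \emph{increment} estimate $-\beta c_d n\le \log\bigl(Z_d(n+1)/Z_d(n)\bigr)\le -\beta c_d n+C\beta\log(n+1)$ (the paper's Lemma \ref{znlmm2d}), which yields an error $O(\beta|k_r|\log m)$ after telescoping; its upper half rests on the sharp energy lower bound $L_n\ge c_d\binom{n}{2}-Cn\log n$ fed through a Jensen/variational argument, not on an induction through the recursion for $Z_d$.

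Second, even granting that estimate, balancing the quadratic restoring term (Hessian $\asymp\beta$, coming from $e^{-\beta\sum_{i<j}n_in_j}$ combined with the $-\tfrac12\beta c_d n_r^2$ in $\log Z_d$) against the linear error $\beta|k|\log m$ localizes the fluctuation only to $|k|\lesssim\log m$, i.e.\ it proves $\sigma_d^2(m)\le C(\beta)(\log (m+1))^2$ in \emph{both} dimensions (this is the paper's Lemma \ref{varlmm2d}); it does not prove $\sigma_1^2(m)\lesssim\log m$. For $d=2$ this is harmless, since $(\log m)^2\le Cm^{1/2}\log(m+2)$ and your summation then closes (in fact it gives $O(n^{1/2})$ with no logs from the bulk scales). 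For $d=1$, however, your claimed bound $\sigma_1^2(m)\lesssim\log m$ is strictly stronger than what this method delivers and is unsupported; if you insert the provable $(\log m)^2$ into your own sum you get $\sum_{k\le K}\bigl(\log(n2^{-k})\bigr)^2\asymp(\log n)^3$, overshooting the stated $(\log n)^2$. So the $d=1$ case of your argument has a hole exactly at its load-bearing step. A smaller point: $\sigma_d^2(m)\le m$ is not ``trivial'' (the trivial bound is $m^2/4$); the tail $k>K$ is more safely handled as in the paper, by stopping the martingale at $K$ and bounding $\ee[\var(N(U)\mid\mf_K)]\le\sum_{D}\ee[p(D)N(D)^2]$ directly, using that $\ee[N(D)^2]=O_\beta(1)$ for $D\in\dm_K$ and that $\sum_D\leb(D\cap U)\lesssim A(U)2^{-K}$ over boundary cubes.
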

The following theorem shows that in dimension two, $N(U)$ has fluctuations of order at least $n^{1/4}$, matching the above upper bound up to a logarithmic factor.
\begin{thm}[Lower bound in 2D]\label{lowthm2d}
Let $U$ be a nonempty connected open subset of $[0,1]^2$ whose boundary is a simple, smooth,  closed curve. Let $N(U)$ be as in Theorem~\ref{macthm2d}. Then $N(U)$ has fluctuations of order at least $n^{1/4}$, in the sense that there are three  constants $n_0\ge 1$, $c_1>0$ and $c_2<1$, depending only on $U$ and $\beta$, such that for any $n\ge n_0$ and any $-\infty<a\le b<\infty$ with $b-a\le c_1n^{1/4}$, we have~$\pp(a\le N(U)\le b) \le c_2$.
\end{thm}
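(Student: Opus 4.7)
The plan is to mimic the $3$D argument of Theorem~\ref{lowthm} at the $2$D scale. The cornerstone is a hierarchical Markov property: if we condition on the level-$k$ dyadic cell counts, the configurations inside distinct level-$k$ cells become independent hierarchical Coulomb gases on those sub-squares. At the level $k := \lfloor \tfrac12\log_2 n\rfloor$ where each cell has $\ee[N_s]=n|s|\asymp 1$, the smooth boundary $\partial U$ traverses $\Theta(n^{1/2})$ cells, each of which will furnish an essentially independent Bernoulli-type contribution to $N(U)$; a Kolmogorov--Rogozin anti-concentration estimate will then deliver the required spread at scale $\sqrt{n^{1/2}}=n^{1/4}$.

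More precisely, fix $k$ as above so the level-$k$ sub-squares have side $\epsilon := 2^{-k}\asymp n^{-1/2}$, and let $\mathcal F_k$ be the $\sigma$-field generated by the counts $\{N_s\}$. Since $w(x_i,x_j)$ equals the first dyadic level at which $x_i$ and $x_j$ separate, the part of $\sum_{i<j}w(x_i,x_j)$ coming from pairs in different level-$k$ cells is $\mathcal F_k$-measurable; the conditional density therefore factorizes, and the configurations inside distinct level-$k$ cells are conditionally independent, each a hierarchical Coulomb gas on its sub-square with the prescribed number of particles. Because $\partial U$ is a simple smooth closed curve of finite positive length, a standard covering argument yields a set $\mathcal B$ of at least $c_3 n^{1/2}$ \emph{good} level-$k$ boundary cells---cells that $\partial U$ splits into two pieces each of area $\ge c_0\epsilon^2$. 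Let $\mathcal G\subset \mathcal B$ denote the random subset of good cells on which $N_s=1$. A second-moment argument, combining $\ee[N_s]\asymp 1$ with scale-adaptive cell-count variance bounds (the $2$D counterpart of Theorem~\ref{mainthm}), produces the event $\{|\mathcal G|\ge c_4 n^{1/2}\}$ with probability at least $1-c_2/2$.

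On this event, decompose $N(U)=D+R$ with $D:=\ee[N(U)\mid\mathcal F_k]$. For $s\in\mathcal G$ the unique particle in $s$ is uniformly distributed on $s$ (a one-particle gas has no interactions), so the contribution of $s$ to $R$ is a centered Bernoulli with success parameter $|U\cap s|/|s|\in[c_0,1-c_0]$; the Markov property makes these Bernoulli summands conditionally independent across $\mathcal G$. The Kolmogorov--Rogozin inequality applied to this sum of at least $c_4 n^{1/2}$ independent bounded summands (and the fact that convolving with the remaining boundary-cell contributions can only spread the law out further) gives
\[
\pp\bigl(a\le N(U)\le b\,\bigm|\,\mathcal F_k\bigr) \le \frac{C(b-a+1)}{\sqrt{|\mathcal G|}} \le \frac{C(b-a+1)}{\sqrt{c_4}\,n^{1/4}}
\]
on the high-probability event. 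Choosing $c_1$ small enough forces the right-hand side below $c_2/2$ whenever $b-a\le c_1 n^{1/4}$; adding the $c_2/2$ probability of the complementary event yields $\pp(a\le N(U)\le b)\le c_2 < 1$.

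\textbf{Main obstacle.} The nontrivial input is the high-probability lower bound $|\mathcal G|\ge c_4 n^{1/2}$: the macroscopic statement of Theorem~\ref{macthm2d} does not by itself guarantee that a positive fraction of the scale-$\epsilon$ cells carry exactly one particle. One needs either a scale-adaptive refinement of the hyperuniformity bound (the $2$D counterpart of Theorem~\ref{mainthm}), or a direct Paley--Zygmund argument exploiting the hierarchical Markov structure to lower-bound $\pp(N_s=1)$ at cells with $\ee[N_s]\asymp 1$, together with covariance bounds between distant cells to control the fluctuations of $|\mathcal G|$. The remaining steps---the geometric construction of $\mathcal B$ and the Kolmogorov--Rogozin estimate---are essentially routine.
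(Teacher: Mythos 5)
Your architecture is the right one and matches the paper's: condition on $\mf_k$ at the scale $2^{-k}\asymp n^{-1/2}$ (Lemma \ref{condlmm2d}), produce $\Theta(n^{1/2})$ boundary cells that $\partial U$ splits into two pieces of comparable area (Lemma \ref{k1lmm2d}), show that a positive fraction of them are occupied (Chebyshev for $N(Q)$ via Theorem \ref{mainthm2d}, then Paley--Zygmund and Markov), and finish with an anti-concentration estimate for the conditionally independent boundary contributions. The genuine gap is your reduction to cells carrying \emph{exactly one} particle. In three dimensions that reduction is available because conditioning on a level-$k$ cell rescales the inverse temperature to $2^k\beta\to\infty$, so double occupancy of a slightly deeper sub-cell is exponentially unlikely (Lemma \ref{repulsion}); in two dimensions the conditional gas inside a level-$k$ cell is again a hierarchical gas at the \emph{same} inverse temperature $\beta$ (Lemma \ref{condlmm2d}), and no such repulsion mechanism exists. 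Neither of the remedies you sketch closes this: the hyperuniformity bounds control counts rather than occupancy patterns, and Paley--Zygmund lower-bounds $\pp(N_s>0)$, not $\pp(N_s=1)$ --- a priori every occupied boundary cell could contain two or more particles, in which case your set $\mathcal{G}$ is empty.

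The paper's fix is to give up on single occupancy. It takes $\mc_0$ to be the good boundary cells with $1\le N(D)\le m$, which the Paley--Zygmund/Markov step does produce with probability at least $1/13$ (only a constant-probability event, not the probability $1-c_2/2$ you assert; that is still enough, since one bounds $\pp(a\le N(U)\le b)$ by $12/13$ plus the conditional anti-concentration bound on the good event). For each such cell, Lemma \ref{repulsion2d} gives $\var(N(D\cap U)\mid\mf_k)\ge L_3(U,\beta)>0$: for a $j$-particle gas on a square with $j\le m$, the events that all $j$ particles land in $D\cap U$ and that none do each have probability bounded below (Jensen's inequality plus $Z(j,\beta)\le 1$), uniformly in the scale precisely because the conditional temperature does not change --- the point the paper flags as the crucial difference from the 3D argument. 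Berry--Esseen applied to $M=\sum_{D\in\mc_0}N(D\cap U)$, a sum of bounded, conditionally independent, non-degenerate summands, then yields $\pp(N(U)\in I\mid\mf_k)\le C(U,\beta)(|I|+1)/\sqrt{|\mc_0|}$, which plays the role of your Kolmogorov--Rogozin step. If you insist on the single-occupancy route, you would need a separate lemma lower-bounding $\pp(N(D)=1\mid\mf_{k-1})$ for cells whose parent holds between $1$ and $4m$ particles (provable by a direct computation in the spirit of Lemma \ref{repulsion2d}), together with a concentration argument for the number of such cells; that is extra work the paper's route avoids.
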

Like the $n^{1/3}$ rate in the 3D case, the $n^{1/4}$ rate  was also predicted in the physics literature \cite{lebowitz83, martinyalcin80, jancovicietal93} for the 2D Coulomb gas. The $n^{1/4}$ fluctuation in the special case of $\beta=1$ in the 2D Coulomb gas (corresponding to the exactly solvable Ginibre ensemble) can be established by standard techniques, as I learned from Paul Bourgade in a personal communication.

We also have the following analog of Theorem \ref{micthm}. With $N_x(\lambda U)$ as in Theorem \ref{micthm}, it shows that for $\lambda\gg1$, $N_x(\lambda U)$ has fluctuations of order at most $\lambda^{1/2}\log \lambda$ in dimension two, and $\log \lambda$ in dimension one. 
\begin{thm}[Microscopic hyperuniformity in 2D and 1D]\label{micthm2d}
Consider the model defined above in dimension $d=1$ or $2$. Let $U$ and  $N_x(\lambda U)$ be as in Theorem~\ref{micthm}.  Then for any $\lambda$ such that $\diam(\lambda U)\ge 1$,
\[
\lim_{n\to \infty} \ee(N_x(\lambda U)) = \leb(\lambda U) = \lambda^d\leb(U),
\]
and 
\eq{
\limsup_{n\to \infty} \var(N_x(\lambda U))&\le  C(U, \beta)\lambda^{d-1} (\log(7\lambda^d\diam(U)^d))^2,
}
where $C(U, \beta)$ is a constant that depends only on $U$ and $\beta$.
\end{thm}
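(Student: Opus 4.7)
The plan is to derive Theorem~\ref{micthm2d} from the macroscopic bound of Theorem~\ref{macthm2d} by exploiting the self-similarity that is built into the hierarchical model. Since the blow-up region $x+n^{-1/d}\lambda U$ has diameter $\asymp n^{-1/d}\lambda\diam(U)$, for large $n$ it is much smaller than any fixed dyadic cell, and I expect that conditioning on the particle counts at the appropriate level of the dyadic tree will reduce the problem to a macroscopic hyperuniformity question inside a cube that contains $\asymp\lambda^d\diam(U)^d$ particles. The expectation is handled separately: the Gibbs density~\eqref{densdef} is invariant under the group of measure-preserving bijections of $[0,1]^d$ that permute the dyadic children of each dyadic cube, since these preserve $w$; this group acts transitively on $[0,1]^d$ modulo a null set, so every one-point marginal is uniform, which gives $\ee[N_x(\lambda U)] = n\leb(x+n^{-1/d}\lambda U) = \lambda^d\leb(U)$ as soon as $n$ is large enough that the blow-up region fits inside $[0,1]^d$.

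The key structural input I would establish is the following self-similarity. Let $Q$ be a dyadic cube of side $2^{-m}$ and let $\phi_Q:Q\to[0,1]^d$ be the canonical affine rescaling. For $y,z\in Q$ one has $w(y,z) = m + w(\phi_Q(y),\phi_Q(z))$, while for $y\in Q$ and $z\notin Q$ the value $w(y,z)$ depends only on which level-$m$ dyadic cell contains $z$. Writing $\mathcal{F}_m$ for the $\sigma$-algebra generated by the particle counts in all level-$m$ cells, it follows that given $\mathcal{F}_m$ the particles in different cells are conditionally independent, and inside each cell $Q$ they form, after pushing forward by $\phi_Q$, an unconditional hierarchical gas on $[0,1]^d$ with $N(Q)$ particles and the same parameter $\beta$; the additive shift $m$ contributes only a constant to the exponent and is absorbed by the normalization.

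Using this, I would pick $m$ with $2^{-m}\asymp n^{-1/d}\lambda\diam(U)$, so that the blow-up region meets only $K = O_d(1)$ level-$m$ cells $Q_1,\dots,Q_K$, each with $\ee[N(Q_i)]\asymp\lambda^d\diam(U)^d$. Writing $V_i = Q_i\cap(x+n^{-1/d}\lambda U)$ and $\tilde V_i = \phi_{Q_i}(V_i)$, each $\tilde V_i$ is a rescaled portion of $\lambda U$ of diameter $O(1)$, and hence inherits a boundary regularity constant depending only on $U$. By conditional independence, $\var(N_x(\lambda U)\mid\mathcal{F}_m) = \sum_i\var(N(V_i)\mid\mathcal{F}_m)$, and applying Theorem~\ref{macthm2d} inside each rescaled cube yields $\var(N(V_i)\mid\mathcal{F}_m) \le C(\tilde V_i,\beta)\,N(Q_i)^{(d-1)/d}(\log N(Q_i))^2$; taking expectations and absorbing $C(\tilde V_i,\beta)$ into $C(U,\beta)$ gives the required $O\bigl(\lambda^{d-1}(\log(\lambda^d\diam(U)^d))^2\bigr)$ contribution. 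The remaining term $\var(\ee[N_x(\lambda U)\mid\mathcal{F}_m]) = \var\bigl(\sum_i \leb(V_i)\,2^{md}\,N(Q_i)\bigr)$ is a variance of a bounded linear combination of the $N(Q_i)$, and is controlled by a final application of Theorem~\ref{macthm2d} to the macroscopic union $Q_1\cup\cdots\cup Q_K$.

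The hardest step will be bookkeeping the dependence of the constant in Theorem~\ref{macthm2d} on the region. The statement absorbs the geometry into an opaque constant $C(U,\beta)$, but the argument above genuinely requires the bound to depend on the region only through the regularity constant in~\eqref{regeq} (together with $\diam(U)$), so that rescalings produce the factor $\lambda^{d-1}$ rather than something larger. I would therefore first prove a general all-scales hyperuniformity estimate with \emph{explicit} regularity dependence — an analog of the unified Theorem~\ref{mainthm} that the paper provides in the 3D case — and only then feed it into the conditioning argument above. A secondary technical point is that $x$ may lie close to the boundary of the level-$m$ cell containing it, forcing $K>1$; but the uniform bound $K = O_d(1)$ makes this harmless.
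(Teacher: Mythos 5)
Your outline is essentially sound, but it takes a genuinely different and ultimately more roundabout route than the paper. The paper proves a single scale-explicit estimate (Theorem \ref{mainthm2d}): for any regular $U$ with $\diam(U)\ge n^{-1/d}$, $\var(N(U))\le C(\beta)(A(U)n^{(d-1)/d}+1)(\log(7\diam(U)^dn))^2$, where $A(U)$ is precisely the explicit regularity constant from \eqref{audef} whose necessity you correctly identify in your last paragraph. Theorem \ref{micthm2d} is then a one-line corollary: set $V:=x+n^{-1/d}\lambda U$, note $N_x(\lambda U)=N(V)$, and compute $A(V)n^{(d-1)/d}=\lambda^{d-1}A(U)$ and $\diam(V)^dn=\lambda^d\diam(U)^d$; the hypothesis $\diam(V)\ge n^{-1/d}$ is exactly $\diam(\lambda U)\ge 1$. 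The point you should notice is that once you have carried out the ``hardest step'' you describe --- proving the all-scales estimate with explicit dependence on the regularity constant --- your entire conditioning-on-$\mf_m$ apparatus becomes redundant, because that estimate already applies directly to the microscopic set $V$. Your localization scheme (condition on the counts at a mesoscopic dyadic level $m$ with $2^{-m}\asymp n^{-1/d}\lambda\diam(U)$, use the exact self-similarity of Lemma \ref{condlmm2d} to see an independent copy of the model with $N(Q_i)$ particles in each cell, and apply the macroscopic bound inside each cell) is a valid alternative and is conceptually appealing, but it costs extra work that you have not executed: controlling $\ee\bigl[N(Q_i)^{(d-1)/d}(\log N(Q_i))^2\bigr]$ for the random particle numbers (the function is not concave near the origin, so one needs a truncation plus the concentration of $N(Q_i)$ from Lemma \ref{expvarlmm2d}); verifying that the regularity constants of the sets $\tilde V_i=\phi_{Q_i}(Q_i\cap V)$ are uniformly controlled after intersecting with cube faces; and bounding $\var\bigl(\sum_ip(Q_i)N(Q_i)\bigr)$, which is a weighted sum and is not literally $\var(N(Q_1\cup\cdots\cup Q_K))$ (though Cauchy--Schwarz plus the per-cell variance bound of Lemma \ref{expvarlmm2d} handles it). Your symmetry argument for the expectation (invariance under dyadic-tree automorphisms forces uniform one-point marginals) is correct and equivalent to the paper's iteration of $\ee(N(D')|\mf_k)=N(D)/2^d$. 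In short: correct skeleton, but the direct rescaling of the all-scales theorem is both necessary to your plan and already sufficient by itself.
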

As before, Theorems \ref{macthm2d} and \ref{micthm2d} are special cases of a more general theorem (Theorem \ref{mainthm2d} in Section \ref{proof2d}) that gives hyperuniformity at all scales. 

Finally, let us consider linear statistics. It has been proved recently in~\cite{bbny15, bbny16, lebleserfaty16} that for the 2D Coulomb gas, linear statistics of smooth functions have $O(1)$ fluctuations. For Lipschitz $f$, the following theorem shows that for our model in dimension two, the fluctuations of $X(f)$ are at most of order $(\log n)^{3/2}$ instead of $n^{1/4}$.  Unlike Theorem \ref{linearthm}, this is a big improvement of the bound from Theorem \ref{macthm2d}, and is within a logarithmic factor of the $O(1)$ bound from~\cite{bbny15, bbny16, lebleserfaty16}.
\begin{thm}[Upper bound for linear statistics in 2D and 1D]\label{linearthm2d}
Let $d=1$ or $2$. Suppose that $f:[0,1]^d\to \rr$ is a Lipschitz function with Lipschitz constant $L$. Let $X_1,\ldots,X_n$ be a realization of points from our model in dimension $d$, and let $X(f)$ be the linear statistic defined in \eqref{lindef}. Then
\[
\var(X(f))\le C(\beta)L^2(\log n)^{d+1},
\]
where $C(\beta)$ is a constant that depends only on $\beta$. 
\end{thm}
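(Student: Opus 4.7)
The plan is to exploit the fact that the Hamiltonian is a function of the dyadic counts alone by expanding $f$ into hierarchical ``Haar details'' at successive dyadic scales and then bounding the variance contribution scale-by-scale.

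Let $\mathcal{Q}_k$ denote the set of dyadic sub-cubes of $[0,1]^d$ of side $2^{-k}$, and for each $k \ge 0$ let $f_k$ be obtained from $f$ by averaging over every $Q \in \mathcal{Q}_k$. Since $f$ is $L$-Lipschitz, $\|f-f_k\|_\infty \le L\sqrt{d}\,2^{-k}$ and $f_0 \equiv \int f$. Put $g_k := f_k - f_{k-1}$ and $k^\ast := \lceil \log_2 n \rceil$, and decompose
\[
X(f) - n\int f = \sum_{k=1}^{k^\ast} X(g_k) + X(f-f_{k^\ast}).
\]
The residual is deterministically small, $|X(f-f_{k^\ast})| \le n\,L\sqrt{d}\,2^{-k^\ast} \le L\sqrt{d}$, contributing $O(L^2)$ to the variance. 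The task thus reduces to bounding each $\var(X(g_k))$ and combining by Minkowski's inequality, $\sqrt{\var(X(f))} \le \sum_k \sqrt{\var(X(g_k))} + O(L)$.

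I would then exploit the hierarchical Markov property. Writing $X(g_k) = \sum_{P \in \mathcal{Q}_{k-1}} Y_P$ with $Y_P := \sum_{i=1}^{2^d} g_k(Q_{P,i})N(Q_{P,i})$ (sum over the $2^d$ children of $P$), the fact that the Hamiltonian depends on the configuration only through the dyadic counts implies that, conditional on $\{N(P')\}_{P'\in\mathcal{Q}_{k-1}}$, the children-count vectors are independent across parents and exchangeable within each parent. Because $g_k$ has mean zero on every parent cube, exchangeability forces $\ee[Y_P \mid \{N(P')\}] = 0$, and hence
\[
\var(X(g_k)) = \sum_{P \in \mathcal{Q}_{k-1}} \ee\bigl[\var(Y_P \mid N(P))\bigr].
\]
A short exchangeable-covariance calculation gives $\var(Y_P \mid N(P)=m) \le 2V(m)\sum_i g_k(Q_{P,i})^2$, where $V(m):=\var(N(Q_{P,1}) \mid N(P)=m)$. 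Combining with the pointwise bound $|g_k| \le 3L\sqrt{d}\,2^{-k}$ and the crude $N(P)\le n$ yields
\[
\var(X(g_k)) \;\le\; C\,L^2\, 2^{(d-2)k}\,\sup_{m\le n} V(m).
\]

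Assuming the key quantitative input $\sup_{m \le n}V(m) \le C(\beta) \log n$, Minkowski then gives the desired bound. For $d=2$ each summand is $O(L\sqrt{\log n})$, and summing over the $k^\ast = O(\log n)$ scales gives $O(L(\log n)^{3/2})$, which squares to $O(L^2(\log n)^3)$. For $d=1$ the geometric factor $2^{-k/2}$ renders the sum convergent, yielding $O(L^2 \log n)$. Both fit inside the stated $O(L^2(\log n)^{d+1})$.

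The main obstacle is the bound $V(m) \lesssim \log m$. Integrating out positions inside each child produces the explicit factorized conditional law
\[
\pp\bigl[(m_1,\dots,m_{2^d}) \mid N(P)=m\bigr] \;\propto\; \binom{m}{m_1,\dots,m_{2^d}}\prod_{i=1}^{2^d} Z(m_i,\beta),
\]
so the obstacle reduces to showing that this multinomial-with-weights measure concentrates around the equipartition $m_i = m/2^d$ with fluctuations of order $\sqrt{\log m}$. I would try to establish this by induction on $m$, using an estimate on $\log Z(m,\beta)$ that is essentially a one-scale version of the macroscopic hyperuniformity bound of Theorem~\ref{macthm2d}.
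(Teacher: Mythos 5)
Your multiscale decomposition is essentially the paper's own argument in disguise: the functions $f_k$ and the telescoping $X(f)=\sum_k X(g_k)+(\text{residual})$ are exactly the martingale $W_k=X(f_k)$ with respect to the filtration generated by the dyadic counts, and your observation that $\ee[Y_P\mid\{N(P')\}]=0$ is precisely the martingale property. The per-scale bound $\var(X(g_k))\le C L^2 2^{(d-2)k}\sup_m V(m)$ matches the paper's estimate of $\ee(\var(X(f_j)\mid\mf_{j-1}))$. So the architecture is sound and not genuinely different from the paper's.

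The gap is the input you explicitly defer: the one-cube conditional variance bound $V(m)=\var\bigl(N(Q_{P,1})\mid N(P)=m\bigr)$. This is not a routine concentration step; it is where the bulk of the paper's work lives. The paper proves $V(m)\le K(\beta)(\log(m+1))^2$ (Lemma \ref{varlmm2d}), and that proof requires two nontrivial ingredients you have not supplied: a lower bound on the ground-state energy $L_n$ accurate to $O(n\log n)$ (Theorem \ref{lnlowthm2d}), which feeds into two-sided estimates on the partition-function ratios $Z(n+1,\beta)/Z(n,\beta)$ (Lemma \ref{znlmm2d} and Corollary \ref{zncor2d}); these ratio bounds are what make the weighted multinomial measure $\binom{m}{m_1,\dots,m_{2^d}}\prod_i Z(m_i,\beta)$ concentrate near equipartition. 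Your sketch (``induction on $m$ using an estimate on $\log Z$'') does not indicate how to get these estimates, and moreover you assume the \emph{stronger} bound $V(m)\lesssim\log m$, i.e.\ fluctuations of order $\sqrt{\log m}$, which the paper does not establish and which you would need to justify separately.

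There is also a quantitative slip in how you combine scales. With the bound the paper actually proves, $V(m)\le C(\beta)(\log n)^2$, your Minkowski inequality gives, for $d=2$, $\sqrt{\var(X(g_k))}\le CL\log n$ per scale and hence $\var(X(f))\le CL^2(\log n)^4$ after summing over the $O(\log n)$ scales and squaring --- one logarithm worse than the theorem. Your argument only lands on $(\log n)^3$ because you plugged in the unproven $V(m)\lesssim\log n$. The fix is easy and already implicit in your own computation: since $\ee[X(g_k)\mid\mf_{k-1}]=0$, the increments $X(g_k)$ are orthogonal, so the variances add and you get $\var(X(f))\le C(\beta)L^2(\log n)^2\cdot\log n=C(\beta)L^2(\log n)^3$ directly, which is how the paper proceeds. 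With that replacement, and with Lemma \ref{varlmm2d} supplied, your proof closes.
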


\section{Proofs in 3D}
The rest of this paper is devoted to proofs. In this section, we will prove the theorems of Section \ref{results}. 
\subsection{Notation}
It is helpful to define some precise notations and terminologies. For a slight technical convenience, we will replace the unit cube by the half-open unit cube $[0,1)^3$. Clearly, this will not alter the conclusions.

A dyadic sub-interval of the half-open unit interval $[0,1)$ is an interval of the form $[i2^{-k}, (i+1)2^{-k})$, where $k\ge 0$ and $0\le i\le 2^k-1$. A dyadic sub-cube of the half-open unit cube $[0,1)^3$ is a sub-cube of the form $I_1\times I_2\times I_3$, where $I_1$, $I_2$ and $I_3$ are dyadic sub-intervals of $[0,1)$ of equal length. Let $\dm_k$ be the set of all dyadic sub-cubes of $[0,1)^3$ of side-length $2^{-k}$, and let 
\[
\dm := \bigcup_{k=0}^\infty \dm_k
\]
be the set of all dyadic sub-cubes of $[0,1)^3$. Then $\dm$ has a natural tree structure, with each node having $8$ children. We will freely use the terms `child', `parent', `ancestor' and `descendant' with respect to this tree. 

For any two distinct points $x,y\in [0,1)^3$, let $k(x,y)$ be the smallest $k$ such that $x$ and $y$ belong to distinct elements of $\dm_k$. Then our potential $w$ is the function $w(x,y) = 2^{k(x,y)}$.  For $x=y$, let $w(x,y)=\infty$. 

For each $n\ge 2$, let $\Sigma_n$ be the set of all $n$-tuples of points from $[0,1)^3$. Define the energy of a configuration $(x_1,\ldots, x_n)\in \Sigma_n$~as 
\[
H_n(x_1,\ldots, x_n) := \sum_{1\le i<j\le n} w(x_i, x_j).
\]
For $\beta>0$, let $\mu_{n,\beta}$ be the probability measure on $\Sigma_n$ that has density
\[
\frac{1}{Z(n,\beta)}e^{-\beta H_n(x_1,\ldots, x_n)}
\]
with respect to Lebesgue measure on $\Sigma_n$, where $Z(n,\beta)$ is the normalizing constant. The measure $\mu_{n,\beta}$ defines our model of an interacting gas at inverse temperature $\beta$. 

For certain technical reasons, we will also define the model for $n=0$ and $n=1$. When $n=0$, there are no points. When $n=1$, there is one point which is uniformly distributed in the cube. We will let $Z(0,\beta)=Z(1,\beta)=1$ for any $\beta$.

\subsection{Preliminary calculations}\label{prelimsec}
In the following, all integrals are over $[0,1)^3$ and all double integrals are over $[0,1)^3\times [0,1)^3$, unless otherwise specified. 
\begin{lmm}\label{wlmm}
For each $x\in [0,1)^3$,
\[
\int w(x,y) \, dy = \frac{7}{3}.
\]
Consequently,
\[
\iint w(x,y) \, dx\, dy = \frac{7}{3}. 
\]
\end{lmm}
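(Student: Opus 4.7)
\medskip
\noindent
\textbf{Proof proposal.} The plan is to slice $[0,1)^3$ into the level sets of the function $y\mapsto w(x,y)$ for each fixed $x$, and then sum a geometric series. For $x\in[0,1)^3$ and $k\ge 0$, let $D_k(x)$ denote the unique element of $\dm_k$ containing $x$. Because $\dm_0=\{[0,1)^3\}$, every $y\ne x$ satisfies $k(x,y)\ge 1$, and $k(x,y)=k$ is equivalent to $y\in D_{k-1}(x)\setminus D_k(x)$ (same dyadic cube of side $2^{-(k-1)}$ as $x$, but different cube of side $2^{-k}$). Thus the sets $A_k(x):=D_{k-1}(x)\setminus D_k(x)$, $k\ge 1$, form a partition of $[0,1)^3\setminus\{x\}$, and $w(x,\cdot)$ equals the constant $2^k$ on $A_k(x)$.

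Next I would just compute the Lebesgue measure of each piece. Since $D_k(x)$ has side-length $2^{-k}$ and volume $8^{-k}$,
\[
\leb(A_k(x)) = 8^{-(k-1)} - 8^{-k} = 7\cdot 8^{-k}.
\]
Therefore, ignoring the measure-zero point $\{x\}$,
\[
\int w(x,y)\,dy \;=\; \sum_{k=1}^\infty 2^k\cdot \leb(A_k(x)) \;=\; 7\sum_{k=1}^\infty 4^{-k} \;=\; 7\cdot\frac{1/4}{1-1/4} \;=\; \frac{7}{3},
\]
which proves the first identity. The double integral identity is immediate: by Fubini (the integrand is nonnegative), $\iint w(x,y)\,dx\,dy = \int\bigl(\int w(x,y)\,dy\bigr)dx = \int \tfrac{7}{3}\,dx = \tfrac{7}{3}$, using $\leb([0,1)^3)=1$.

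There is no real obstacle here: the key point is just that the hierarchical structure makes the level sets of $w(x,\cdot)$ transparent, and the resulting sum is a geometric series with ratio $2/8=1/4$. The only thing to be slightly careful about is handling the diagonal $\{x\}$ (where $w=\infty$ by convention) and the single boundary issue that points on faces of dyadic cubes are assigned to only one subcube by the half-open convention $[0,1)^3$ introduced just above the lemma; both of these are null sets and do not affect the integral.
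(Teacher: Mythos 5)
Your proposal is correct and follows essentially the same route as the paper: identifying the level set $\{y: w(x,y)=2^k\}$ as $D_{k-1}(x)\setminus D_k(x)$ (the paper phrases this as the seven sibling cubes of $D_k$ inside $D_{k-1}$), computing its measure as $7\cdot 8^{-k}$, and summing the geometric series $7\sum_{k\ge1}2^k 8^{-k}=7/3$, with the double integral following by Fubini. No issues.
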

\begin{proof}
Take any $x$. For each $k$, let $D_k$ be the element of $\dm_k$ that contains $x$. It is easy to see that the set of all $y$ with $w(x,y)=2^k$ is exactly the union of all members of $\dm_k$ that are contained in $D_{k-1}$, except the one that contains~$x$. The Lebesgue measure of this set is $8^{-k}\cdot 7$. Thus,
\[
\int w(x,y)\, dy = 7\sum_{k=1}^\infty 2^k8^{-k} = \frac{7}{3}.
\]
The second assertion is obvious from the first.
\end{proof}
Let us now investigate energy-minimizing configurations of finite size. 
Henceforth, $L_n$ will denote the minimum possible energy of a configuration of $n$ points. The following result gives upper and lower bounds for $L_n$. 
\begin{thm}\label{lnlowthm}
There is a positive constant $C_1$ such that for each $n\ge 2$,
\[
{n\choose 2}\frac{7}{3} - C_1n^{4/3}\le L_n \le {n\choose 2}\frac{7}{3}.
\]
\end{thm}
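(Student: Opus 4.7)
The plan is to prove the two inequalities separately, with the upper bound essentially free from Lemma~\ref{wlmm} and the lower bound requiring a hierarchical decomposition of $H_n$.

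For the upper bound, $L_n$ is the minimum of $H_n$ over $\Sigma_n$ and is therefore at most the expectation of $H_n$ with respect to the uniform probability measure on $\Sigma_n$. By linearity and Lemma~\ref{wlmm}, that expectation equals $\binom{n}{2}\cdot\tfrac{7}{3}$, yielding $L_n \le \binom{n}{2}\cdot\tfrac{7}{3}$ at once.

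For the lower bound, the idea is to write the potential as a telescoping sum compatible with the hierarchy. Since $k(x,y)\ge 1$ is by definition the smallest $k$ at which $x,y$ sit in distinct elements of $\dm_k$, an easy computation gives, for all distinct $x,y\in[0,1)^3$,
\[
w(x,y) \;=\; 2^{k(x,y)} \;=\; 1 + \sum_{j=0}^{\infty} 2^j\,\mathbb{1}\{x,y \text{ lie in the same element of } \dm_j\}.
\]
Given a configuration, let $N(D)$ denote the number of its points in $D$ and set $S_j := \sum_{D\in\dm_j}\binom{N(D)}{2}$. Summing the identity over pairs and using $S_0=\binom{n}{2}$ rewrites the energy as
\[
H_n \;=\; 2\binom{n}{2} + \sum_{j=1}^{\infty} 2^j\, S_j,
\]
so the task reduces to bounding $\sum_{j\ge 1} 2^j S_j$ from below uniformly in the configuration. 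For each $j$, Cauchy--Schwarz applied to the $8^j$ counts $N(D)$ summing to $n$ gives $\sum_D N(D)^2 \ge n^2/8^j$, hence $S_j \ge n^2/(2\cdot 8^j) - n/2$; this bound is useful precisely when it is nonnegative, i.e., for $j \le K := \lfloor\log_8 n\rfloor$, and one uses the trivial $S_j \ge 0$ beyond that. Evaluating two geometric series then yields
\[
\sum_{j=1}^{\infty} 2^j S_j \;\ge\; \sum_{j=1}^{K}\!\left(\frac{n^2}{2\cdot 4^j}-\frac{n\cdot 2^j}{2}\right) = \frac{n^2}{6}(1-4^{-K}) - n(2^K-1) \;\ge\; \frac{n^2}{6} - O(n^{4/3}),
\]
because at the cutoff $2^K$ lies between $n^{1/3}/2$ and $n^{1/3}$, so $n\cdot 2^K \le n^{4/3}$ and $n^2\cdot 4^{-K} \le 4n^{4/3}$. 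Plugging this into the formula for $H_n$ and comparing with $\tfrac{7}{3}\binom{n}{2}=\tfrac{7}{6}n(n-1)$ produces $L_n \ge \binom{n}{2}\cdot\tfrac{7}{3} - C_1 n^{4/3}$ for a suitable absolute constant $C_1$.

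The only delicate point is the bookkeeping around the cutoff: the two error terms $n\cdot 2^K$ and $n^2\cdot 4^{-K}$ push in opposite directions as $K$ varies and balance exactly at $2^K\sim n^{1/3}$, i.e., at the microscopic interparticle distance. That the same scale governs the fluctuation bounds in Theorems~\ref{macthm} and~\ref{micthm} is a reassuring consistency check, as the $O(n^{4/3})$ correction in $L_n$ is exactly the size one expects from a surface-type contribution at the microscopic scale.
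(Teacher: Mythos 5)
Your proof is correct and follows essentially the same route as the paper: the upper bound via the expected energy of i.i.d.\ uniform points, and the lower bound via the decomposition $H_n = 2\binom{n}{2} + \sum_{j\ge1} 2^j S_j$ with Cauchy--Schwarz at each level and truncation at the scale $2^{-j}\sim n^{-1/3}$. Your explicit telescoping identity for $w(x,y)$ is a slightly more transparent derivation of the decomposition that the paper simply asserts, but the argument is the same.
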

\begin{proof}
Let $Y_1,\ldots, Y_n$ be i.i.d.~uniform random points from $[0,1)^3$. Then by symmetry, 
\eq{
L_n &\le \ee(H_n(Y_1,\ldots, Y_n)) = \sum_{1\le i<j\le n} \ee(w(Y_i, Y_j)) = {n\choose 2} \ee(w(Y_1,Y_2)).
}
By Lemma \ref{wlmm}, $\ee(w(Y_1,Y_2)) = 7/3$. This proves the upper bound. For the lower bound, let $k$ be an integer such that
\[
n^{-1/3}\le 2^{-k}\le 2n^{-1/3}. 
\]
Take any configuration of $n$ points. For each $D\in \dm$, let $n_D$ be the number of points in $D$. Summing up the contributions to the energy from each cube, it is not difficult to see that
\eq{
H_n(x_1,\ldots, x_n) &= \sum_{j=1}^\infty \sum_{D\in \dm_j} 2^j{n_D\choose 2} + 2{n\choose 2}\ge \sum_{j=1}^k \sum_{D\in \dm_j} 2^j{n_D\choose 2} + 2{n\choose 2}\\
&= \sum_{j=1}^k \sum_{D\in \dm_j} 2^{j-1}n_D^2 -  \sum_{j=1}^k 2^{j-1}n + 2{n\choose 2}.
}
By the Cauchy--Schwarz inequality, for each $j$,
\eq{
\sum_{D\in\dm_j} n_D^2 &\ge \frac{1}{|\dm_j|}\biggl(\sum_{D\in \dm_j} n_D\biggr)^2 = \frac{n^2}{8^j}. 
}
Thus,
\eq{
H_n(x_1,\ldots, x_n) &\ge \frac{n^2}{2}\sum_{j=1}^k 4^{-j} - n^{4/3} + 2{n\choose 2}= \frac{n^2}{6}(1-4^{-k})- n^{4/3} + 2{n\choose 2}\\
&\ge \frac{n^2}{6}(1-4n^{-2/3}) -n^{4/3} + 2{n\choose 2}.
}
Since this lower bound holds for any configuration of $n$ points, this completes the proof.
\end{proof}

\subsection{Estimates for the partition function}
The following lemma gives important information about the ratio $Z(n+1,\beta)/Z(n,\beta)$. Theorem \ref{lnlowthm} is a crucial ingredient in the proof of this lemma. Recall that $Z(0,\beta)=Z(1,\beta)=1$. For a measurable function $f: \Sigma_n\to\rr$, we will denote its expected value under $\mu_{n,\beta}$ by $\mu_{n,\beta}(f)$.  
\begin{lmm}\label{znlmm}
There is a constant $C_2$ such that for any $n\ge 0$ and $\beta>0$,
\[
e^{-7\beta n/3}\le \frac{Z(n+1, \beta)}{Z(n, \beta)}\le e^{-7\beta n/3 + C_2\beta  n^{1/3}}. 
\]
\end{lmm}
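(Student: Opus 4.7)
The proof rests on the identity
\[
\frac{Z(n+1,\beta)}{Z(n,\beta)} \;=\; \mu_{n,\beta}\!\left[\int_{[0,1)^3} e^{-\beta\sum_{i=1}^n w(x_i,y)}\,dy\right],
\]
obtained by substituting $H_{n+1}(x_1,\ldots,x_n,y)=H_n(x_1,\ldots,x_n)+\sum_i w(x_i,y)$ into the definition of $Z(n+1,\beta)$ and pulling the $x$-integration out as an expectation under $\mu_{n,\beta}$.

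For the lower bound, I would apply Jensen's inequality to the inner integral, viewing it as the expectation of the convex function $e^{-\beta\,\cdot}$ against the uniform probability on $[0,1)^3$. By Lemma \ref{wlmm}, $\int\sum_i w(x_i,y)\,dy = 7n/3$ for every configuration, so the inner integral is pointwise at least $e^{-7\beta n/3}$, and this estimate survives averaging under $\mu_{n,\beta}$.

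For the upper bound, I would invoke Theorem \ref{lnlowthm}. Since $H_{n+1}(x,y)\ge L_{n+1}$ pointwise, $\sum_i w(x_i,y)\ge L_{n+1}-H_n(x)$, so the inner integral is at most $e^{-\beta L_{n+1}}\,e^{\beta H_n(x)}$. Taking $\mu_{n,\beta}$-expectation and noting that $\mu_{n,\beta}[e^{\beta H_n}] = 1/Z(n,\beta)$ (immediate from the definition of $\mu_{n,\beta}$ and $\vol(\Sigma_n)=1$) yields $Z(n+1,\beta)/Z(n,\beta)\le e^{-\beta L_{n+1}}/Z(n,\beta)$. Inserting the Jensen lower bound $Z(n,\beta)\ge e^{-7\beta\binom{n}{2}/3}$ (from the same convexity argument applied to the uniform measure on $\Sigma_n$) together with $L_{n+1}\ge 7\binom{n+1}{2}/3 - C_1(n+1)^{4/3}$ from Theorem \ref{lnlowthm} then produces the desired exponential form.

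The main obstacle is sharpening the correction from the $n^{4/3}$ that a direct combination of these two bounds yields to the stated $n^{1/3}$. The key improvement exploits hierarchical symmetry---permutations of dyadic subcubes preserve $\mu_{n,\beta}$ and force $\mu_{n,\beta}[e^{-\beta\sum w(x_i,y)}]$ to be independent of $y$, so it suffices to control the Laplace transform of $\sum_i w(x_i,y_0)$ at a single fixed $y_0$---combined with a tighter lower bound on $Z(n,\beta)$ via a trial distribution concentrated near a minimizer of $H_n$, and with the incremental bound $L_{n+1}-L_n\ge 7n/3 - O(n^{1/3})$ obtained by averaging the deletion inequality $L_n \le L_{n+1} - \sum_{i\ne k} w(z_i, z_k)$ over the choice of particle $k$ to remove from a minimizer $(z_1,\ldots,z_{n+1})$ of $H_{n+1}$ and using Theorem \ref{lnlowthm} to lower-bound $L_{n+1}$.
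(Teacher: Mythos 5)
Your lower bound is correct and is essentially the paper's argument. The upper bound, however, has a genuine gap that your proposed repairs do not close. The chain $Z(n+1,\beta)/Z(n,\beta)\le e^{-\beta L_{n+1}}/Z(n,\beta)$ is valid (and the identity $\mu_{n,\beta}(e^{\beta H_n})=1/Z(n,\beta)$ is fine), but to conclude from it you need a lower bound on $Z(n,\beta)$ that is within a factor $e^{O(\beta n^{1/3})}$ of $e^{-\beta L_{n+1}+7\beta n/3}$, i.e.\ essentially of $e^{-\beta L_n}$. No such bound is available: the Jensen bound $Z(n,\beta)\ge e^{-7\beta\binom{n}{2}/3}$ is off from $e^{-\beta L_n}$ by up to $e^{C_1\beta n^{4/3}}$ (Theorem \ref{lnlowthm} only pins $L_n$ down to within $n^{4/3}$ of $\frac73\binom n2$, and one expects the discrepancy to genuinely be of that order), while a trial-distribution bound of the form $Z(n,\beta)\ge \mathrm{vol}(A)\,e^{-\beta\sup_A H_n}$ carries an entropy cost $-\log \mathrm{vol}(A)\gtrsim n\log n$ that is not multiplied by $\beta$, so it cannot yield an error term of the form $C_2\beta n^{1/3}$ uniformly in $\beta$ (and $n\log n\gg n^{1/3}$ in any case). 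Your incremental bound $L_{n+1}-L_n\ge \frac{2L_{n+1}}{n+1}\ge \frac{7n}{3}-2C_1(n+1)^{1/3}$ is correct and is exactly the right kind of averaging, but it cannot be fed back into your chain, because that chain compares $e^{-\beta L_{n+1}}$ to $Z(n,\beta)$ rather than to $e^{-\beta L_n}$. The $y$-independence observation is true but irrelevant to the magnitude of the error.

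The paper sidesteps the need for any sharp lower bound on $Z(n,\beta)$ by running the averaging at the level of the Gibbs measure rather than of minimizers: it writes $Z(n,\beta)/Z(n+1,\beta)=\mu_{n+1,\beta}(e^{\beta f_n})$ with $f_n=\sum_{i\le n}w(x_i,x_{n+1})$, applies Jensen to get $\ge\exp(\beta\,\mu_{n+1,\beta}(f_n))$, and uses exchangeability of $\mu_{n+1,\beta}$ to identify $\mu_{n+1,\beta}(f_n)=\frac{n}{\binom{n+1}{2}}\mu_{n+1,\beta}(H_{n+1})\ge\frac{n}{\binom{n+1}{2}}L_{n+1}$. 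The division by $\binom{n+1}{2}$ is precisely what converts the $n^{4/3}$ uncertainty in $L_{n+1}$ into the stated $n^{1/3}$ --- the same mechanism as in your deletion argument, but applied where it can actually be used. If you want to keep your own framework, the fix is to replace the pointwise bound $\sum_i w(x_i,y)\ge L_{n+1}-H_n(x)$ by this symmetrization under $\mu_{n+1,\beta}$.
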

\begin{proof}
First suppose that $n\ge 2$. For $x_1,\ldots, x_n, x_{n+1}\in [0,1)^3$, let
\[
f_n(x_1,\ldots,x_n, x_{n+1}) := \sum_{i=1}^n w(x_i, x_{n+1}),
\]
so that 
\eq{
H_{n+1}(x_1,\ldots,x_{n+1}) &= f_n(x_1,\ldots,x_n, x_{n+1}) + H_n(x_1,\ldots,x_n). 
}
By the above representation and Jensen's inequality,
\eq{
&\frac{Z(n+1, \beta)}{Z(n,\beta)} = \iint e^{-\beta f_n(x_1,\ldots,x_n, x_{n+1})} \, dx_{n+1}\, d\mu_{n,\beta}(x_1,\ldots,x_n) \\
&\ge \exp\biggl(-\beta\iint f_n(x_1,\ldots,x_n, x_{n+1}) \,  dx_{n+1}\, d\mu_{n,\beta}(x_1,\ldots,x_n) \biggr).
}
But by Lemma \ref{wlmm},
\eq{
&\iint f_n(x_1,\ldots,x_n,x_{n+1}) \, dx_{n+1}\, d\mu_{n,\beta}(x_1,\ldots,x_n)  \\
&=\sum_{i=1}^n \iint w(x_i, x_{n+1}) \, dx_{n+1}\, d\mu_{n,\beta}(x_1,\ldots,x_n) \\
&= \sum_{i=1}^n \int \frac{7}{3}\, d\mu_{n,\beta}(x_1,\ldots,x_n) = \frac{7n}{3}.
}
This gives the desired lower bound.  Next, note that 
\eq{
\frac{Z(n,\beta)}{Z(n+1,\beta)} &= \mu_{n+1,\beta}(e^{\beta f_n(x_1,\ldots, x_n, x_{n+1})}). 
}
Therefore by Jensen's inequality and the invariance of $\mu_{n+1,\beta}$ under permutations of coordinates,
\eq{
\frac{Z(n,\beta)}{Z(n+1,\beta)} &\ge \exp(\beta \mu_{n+1,\beta}(f(x_1,\ldots, x_{n+1}))= \exp(\beta n \mu_{n+1,\beta} (w(x_1,x_{n+1})))\\
&= \exp\biggl(\frac{\beta n}{{n+1\choose 2}} \sum_{1\le i<j\le n+1} \mu_{n+1,\beta} (w(x_i,x_j))\biggr)  \\
&= \exp\biggl(\frac{\beta n}{{n+1\choose 2}} \mu_{n+1,\beta}(H_{n+1}(x_1,\ldots,x_{n+1}))\biggr).
}
But by Theorem \ref{lnlowthm}, 
\eq{
\mu_{n+1,\beta}(H_{n+1}(x_1,\ldots,x_{n+1})) &\ge L_{n+1} \ge \frac{7}{3}{n+1\choose 2} - C_1 (n+1)^{4/3}. 
}
This gives the required upper bound and completes the proof of the lemma for $n\ge 2$. When $n=0$, the bounds hold trivially. When $n=1$, the lower bound follows from an application of Jensen's inequality and Lemma \ref{wlmm}. The upper bound can be forced to hold for $n=1$ by choosing $C_2$ sufficiently large. 
\end{proof}
Lemma \ref{znlmm} is iterated to obtain the following corollary.
\begin{cor}\label{zncor}
For any $n\ge 0$, $\beta>0$, and any $k\ge -n$,
\[
\frac{Z(n+k, \beta)}{Z(n,\beta)} \le \exp\biggl(-\frac{7\beta nk}{3} -\frac{7\beta k(k-1)}{6} + C_2 \beta |k| (n+|k|)^{1/3}\biggr),
\]
where $C_2$ is the constant from Lemma \ref{znlmm}.
\end{cor}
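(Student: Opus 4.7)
The plan is to telescope $Z(n+k,\beta)/Z(n,\beta)$ into a product of consecutive ratios and apply Lemma \ref{znlmm} term by term, handling the cases $k \ge 0$ and $k < 0$ separately since the two sides of the lemma are needed in the two regimes.

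First, for $k \ge 0$, I would write
\[
\frac{Z(n+k,\beta)}{Z(n,\beta)} = \prod_{j=0}^{k-1} \frac{Z(n+j+1,\beta)}{Z(n+j,\beta)} \le \exp\biggl(-\frac{7\beta}{3}\sum_{j=0}^{k-1}(n+j) + C_2 \beta \sum_{j=0}^{k-1}(n+j)^{1/3}\biggr)
\]
using the upper bound of Lemma \ref{znlmm}. The identity $\sum_{j=0}^{k-1}(n+j) = nk + k(k-1)/2$ produces exactly $-7\beta nk/3 - 7\beta k(k-1)/6$, and each $(n+j)^{1/3}$ with $j \le k-1$ is bounded by $(n+k)^{1/3} = (n+|k|)^{1/3}$, so the remainder is at most $C_2 \beta k (n+|k|)^{1/3} = C_2 \beta |k|(n+|k|)^{1/3}$, which is what is claimed.

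For $k < 0$, set $m = -k$ so $1 \le m \le n$. The lower bound of Lemma \ref{znlmm} inverts to $Z(n',\beta)/Z(n'+1,\beta) \le e^{7\beta n'/3}$, hence
\[
\frac{Z(n-m,\beta)}{Z(n,\beta)} = \prod_{j=0}^{m-1} \frac{Z(n-j-1,\beta)}{Z(n-j,\beta)} \le \exp\biggl(\frac{7\beta}{3}\sum_{j=0}^{m-1}(n-j-1)\biggr).
\]
Computing $\sum_{j=0}^{m-1}(n-j-1) = mn - m(m+1)/2$ and substituting $k = -m$ gives the exponent $-7\beta nk/3 - 7\beta k(k-1)/6$ exactly. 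Since the claimed bound has an additional non-negative term $C_2 \beta |k|(n+|k|)^{1/3}$, the inequality trivially holds in this range (and the cases $n=0,1$ with $m \in \{0,1\}$ are either trivial or handled by the $n=1$ bookkeeping already built into Lemma \ref{znlmm}).

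There is no serious obstacle: the proof is pure bookkeeping of a telescoping product. The only minor care needed is to verify that the exponent sums produce precisely the two quadratic-in-$k$ terms $-7\beta nk/3$ and $-7\beta k(k-1)/6$ in both sign regimes, and to note that the negative-$k$ case has room to spare because the lower bound in Lemma \ref{znlmm} carries no error term.
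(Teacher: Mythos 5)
Your proposal is correct and is essentially identical to the paper's own proof: both telescope the ratio into consecutive one-step ratios, apply the upper bound of Lemma \ref{znlmm} for $k\ge 0$ and the (inverted) lower bound for $k<0$, and perform the same arithmetic on the exponent sums, using $l(l+1)=k(k-1)$ in the negative case. Nothing to correct.
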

\begin{proof}
First suppose that $k\ge 0$. By the upper bound from Lemma~\ref{znlmm},
\eq{
\frac{Z(n+k, \beta)}{Z(n,\beta)} &= \prod_{i=0}^{k-1}\frac{Z(n+i+1,\beta)}{Z(n+i,\beta)}\\
&\le  \prod_{i=0}^{k-1}\exp\biggl(-\frac{7\beta(n+i)}{3} + C_2\beta (n+i)^{1/3}\biggr)\\
&\le \exp\biggl(-\frac{7\beta nk}{3} -\frac{7\beta k(k-1)}{6} + C_2 \beta k (n+k)^{1/3}\biggr). 
}
Next, suppose that $k<0$. Let $l= |k|$. Then by the lower bound from Lemma \ref{znlmm}, 
\eq{
\frac{Z(n+k, \beta)}{Z(n,\beta)} &= \prod_{i=0}^{l-1} \frac{Z(n-i-1,\beta)}{Z(n-i,\beta)}\le \prod_{i=0}^{l-1}\exp\biggl(\frac{7\beta(n-i-1)}{3}\biggr)\\
&= \exp\biggl(\frac{7\beta n l}{3} -\frac{7\beta l(l+1)}{6}\biggr).
}
To complete the proof, note that  $l=-k$ and $l(l+1)= k(k-1)$.
\end{proof}
\subsection{Proofs of the upper bounds}\label{proof3d}
Let us now fix some $n\ge 0$ and $\beta>0$. In the following, $(X_1,\ldots, X_n)$ will denote a random configuration drawn from the measure $\mu_{n,\beta}$. We will assume that $(X_1,\ldots,X_n)$ is defined on some abstract probability space $(\Omega, \mf, \pp)$. Expectation, variance and covariance with respect to $\pp$ will be denoted by $\ee$, $\var$ and $\cov$ respectively.
\begin{lmm}\label{varlmm}
Let $D_1,\ldots,D_8$ denote the $8$ elements of $\dm_1$, and for each $1\le i\le 8$, let $N_i := |\{j: X_j\in D_i\}|$. Then for each $i$, $\ee(N_i)= n/8$ and 
\[
\var(N_i)\le K(\beta) n^{2/3},
\]
where $K(\beta)$ is a non-increasing function of $\beta$.
\end{lmm}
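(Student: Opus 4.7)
The plan is to use the hierarchical factorization of $w$ to write the joint law of $(N_1,\ldots,N_8)$ as a product of smaller partition functions, and then invoke Corollary~\ref{zncor} to show concentration of this distribution around the equal configuration at scale $n^{1/3}$.

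First I would observe the factorization: if $x\in D_a$, $y\in D_b$ with $a\neq b$, then $w(x,y)=2$, while if $x,y\in D_a$ then $w(x,y)=2w(\psi_a(x),\psi_a(y))$ with $\psi_a\colon D_a\to[0,1)^3$ the affine rescaling. Hence $H_n=(n^2-\sum_a n_a^2)+2\sum_a H_{n_a}(\psi_a(\text{pts in }D_a))$ whenever $n_a$ points lie in $D_a$. Integrating out the positions inside each $D_a$ and multiplying by the multinomial label-assignment factor gives
\[
\pi(n_1,\ldots,n_8):=\pp(N_1=n_1,\ldots,N_8=n_8)=\binom{n}{n_1,\ldots,n_8}\frac{8^{-n}e^{-\beta n^2+\beta\sum_a n_a^2}}{Z(n,\beta)}\prod_{a=1}^{8}Z(n_a,2\beta).
\]
The identity $\ee(N_i)=n/8$ follows from the invariance of $\pi$ under permutations of $(n_1,\ldots,n_8)$.

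For the variance, set $n^*:=\lfloor n/8\rfloor$ and $k_a:=n_a-n^*$. The multinomial coefficient is maximized near $(n^*,\ldots,n^*)$, and Corollary~\ref{zncor} applied to each ratio $Z(n_a,2\beta)/Z(n^*,2\beta)$ and summed over $a$ (using $\sum_a k_a=n-8n^*\in\{0,\ldots,7\}$) yields
\[
\log\frac{\pi(n_1,\ldots,n_8)}{\pi(n^*,\ldots,n^*)}\leq -\frac{4\beta}{3}\sum_a k_a^2+2C_2\beta\sum_a|k_a|(n^*+|k_a|)^{1/3}+O(1).
\]
Using $(n^*+|k_a|)^{1/3}\leq n^{1/3}+|k_a|^{1/3}$ together with Cauchy--Schwarz ($\sum|k_a|\lesssim(\sum k_a^2)^{1/2}$) and the power-mean inequality ($\sum|k_a|^{4/3}\lesssim(\sum k_a^2)^{2/3}$), then two AM--GM steps to absorb the correction into half of the quadratic, I would arrive at
\[
\pi(n_1,\ldots,n_8)\leq \pi(n^*,\ldots,n^*)\exp\Bigl(-c\beta\sum_a k_a^2+C\beta n^{2/3}\Bigr)
\]
for absolute constants $c,C>0$. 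The scale $n^{2/3}$ is precisely what one gets by squaring the $n^{1/3}$ correction from Corollary~\ref{zncor}.

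Finally I would sum the previous display over configurations with $\sum_a k_a^2\geq R$ and note that the number of lattice points $(k_1,\ldots,k_8)\in\zz^8$ with $\sum k_a^2$ in any given range grows only polynomially in $R$. This gives $\pp(\sum_a k_a^2\geq R)\leq (\text{poly in }R)\cdot e^{-c\beta(R-R_0)}$ for some $R_0$ that is a constant multiple of $n^{2/3}$. Integrating in $R$ yields $\ee[\sum_a k_a^2]\leq K(\beta)n^{2/3}$, and exchangeability gives $\var(N_i)=\ee[\sum_a k_a^2]/8$. The main technical nuisance will be to arrange that $K(\beta)$ is non-increasing: for large $\beta$ the tail integral is bounded by an absolute constant, but for very small $\beta$ or very small $n$ the tail estimate degenerates and one must substitute the trivial bound $\var(N_i)\leq n^2/4$ and take a pointwise maximum to produce a monotone $K$.
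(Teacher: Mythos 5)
Your proposal is correct and follows essentially the same route as the paper: the same hierarchical factorization yielding the exact formula for the law of $(N_1,\ldots,N_8)$, and the same use of Corollary~\ref{zncor} to extract the quadratic decay $-c\beta\sum_a k_a^2$ that dominates the $\beta n^{1/3}\sum_a|k_a|$ correction beyond scale $n^{1/3}$. The only (harmless) differences are in the endgame bookkeeping --- the paper retains the multinomial ratio and controls it via $\sum f=8^n$ together with a Stirling lower bound at the balanced configuration, whereas you discard that ratio using its maximality at the balanced point and then sum the Gaussian tail over $\zz^8$ directly --- and, as in the paper, you should take the reference configuration to be one that actually sums to $n$ (with $|m_i-n/8|\le 1$) rather than the possibly invalid $(n^*,\ldots,n^*)$.
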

\begin{proof}
We have already defined universal constants $C_1$ and $C_2$ in the previous subsections. In this proof, we will continue to use this convention and denote further universal constants by $C_3, C_4,\ldots$ without explicitly mentioning that they denote universal constants on each occasion.

The identity $\ee(N_i)=n/8$ follows by symmetry. We will now prove the claimed bound on the variance. The cases $n=0$ and $n=1$ are trivial, so let us assume that $n\ge 2$. First, note that energy of a configuration is the sum of the energies within each $D_i$, plus the interactions between the $D_i$'s. From this observation it is easy to deduce the recursive relation
\eq{
Z(n,\beta) &= \sum_{\substack{0\le n_1,\ldots, n_8\le n\\ n_1+\cdots +n_8=n}} \frac{n!}{n_1!n_2!\cdots n_8!} e^{-2\beta\sum_{1\le i<j\le 8} n_i n_j}\prod_{i=1}^8 (8^{-n_i}Z(n_i,2\beta))\\
&= \sum_{\substack{0\le n_1,\ldots, n_8\le n\\ n_1+\cdots +n_8=n}} \frac{8^{-n}n!}{n_1!n_2!\cdots n_8!} e^{-2\beta\sum_{1\le i<j\le 8} n_i n_j}\prod_{i=1}^8 Z(n_i,2\beta). 
}
Moreover, for any $(n_1,\ldots, n_8)$ occurring in the above sum,
\eq{
&\pp(N_1=n_1,\ldots, N_8=n_8) \\
&= \frac{8^{-n}n!}{n_1!n_2!\cdots n_8!} e^{-2\beta\sum_{1\le i<j\le 8} n_i n_j}\frac{\prod_{i=1}^8 Z(n_i,2\beta) }{Z(n,\beta)}.
}
Choose nonnegative integers $m_1,\ldots, m_8$ such that $m_1+\cdots+m_8=n$ and $|m_i-n/8|\le 1$ for each $i$. It is not difficult to see that such integers can be found for any $n$. For convenience, let
\eq{
f(n_1,\ldots, n_8) &:= \frac{n!}{n_1!n_2!\cdots n_8!},\\
g(n_1,\ldots, n_8) &:= e^{-2\beta\sum_{1\le i<j\le 8} n_i n_j} = e^{-\beta n^2 + \beta \sum_{i=1}^8 n_i^2}, \\
h(n_1,\ldots, n_8) &:= \prod_{i=1}^8 Z(n_i,2\beta).
}
Take any $k_1,\ldots,k_8\in \zz$ such that $k_1+\cdots+k_8=0$ and $0\le m_i+k_i\le n$ for each~$i$. Then by Corollary \ref{zncor},
\eq{
&\frac{h(m_1+k_1,\ldots, m_8+k_8)}{h(m_1,\ldots, m_8)} \\
&\le \prod_{i=1}^8 \exp\biggl(-\frac{14\beta m_ik_i}{3} -\frac{14\beta k_i(k_i-1)}{6} + 2C_2 \beta |k_i| (n+|k_i|)^{1/3}\biggr)\\
&\le \prod_{i=1}^8 \exp\biggl(-\frac{14\beta (nk_i/8 - |k_i|)}{3} -\frac{14\beta k_i(k_i-1)}{6} + 4C_2\beta |k_i|n^{1/3}\biggr)\\
&\le \exp\biggl(-\frac{14\beta}{6}\sum_{i=1}^8 k_i^2 + C_3\beta n^{1/3}\sum_{i=1}^8 |k_i|\biggr).
}
Next, note that
\eq{
\frac{g(m_1+k_1,\ldots, m_8+k_8)}{g(m_1,\ldots, m_8)} &= \exp\biggl(\beta\sum_{i=1}^8 (m_i+k_i)^2 -\beta\sum_{i=1}^8 m_i^2\biggr)\\
&= \exp\biggl(\beta\sum_{i=1}^8(2m_ik_i + k_i^2)\biggr)\\
&\le \exp\biggl(\beta\sum_{i=1}^8(2nk_i/8 +2|k_i|+ k_i^2)\biggr)\\
&= \exp\biggl(\beta\sum_{i=1}^8(2|k_i|+ k_i^2)\biggr).
}
Therefore,
\eq{
&\frac{\pp(N_1=m_1+k_1,\ldots, N_8=m_8+k_8)}{\pp(N_1=m_1,\ldots, N_8=m_8)}\\
&\le \frac{f(m_1+k_1,\ldots, m_8+k_8)}{f(m_1,\ldots, m_8)} \exp\biggl(-\frac{4\beta}{3}\sum_{i=1}^8 k_i^2 + C_4 \beta n^{1/3} \sum_{i=1}^8|k_i|\biggr).}
This shows that there are positive constants $C_5$ and $C_6$ such that if 
\[
\max_{1\le i\le 8} |k_i|\ge C_5n^{1/3},
\]
then 
\eeq{
&\frac{\pp(N_1=m_1+k_1,\ldots, N_8=m_8+k_8)}{\pp(N_1=m_1,\ldots, N_8=m_8)}\\
&\le \frac{f(m_1+k_1,\ldots, m_8+k_8)}{f(m_1,\ldots, m_8)} e^{-C_6\beta n^{2/3}}. \label{fineq}
}
Let $A$ denote the set of all $(n_1,\ldots, n_8)$ such that each $n_i$ is a nonnegative integer, $n_1+\cdots+n_8=n$, and 
\[
\max_{1\le i\le 8} |n_i-m_i|\ge C_5 n^{1/3}. 
\]
Then by \eqref{fineq},  for any $(n_1,\ldots, n_8)\in A$,
\eq{
\frac{\pp(N_1=n_1,\ldots, N_8=n_8)}{\pp(N_1=m_1,\ldots, N_8=m_8)}&\le \frac{f(n_1,\ldots, n_8)}{f(m_1,\ldots, m_8)} e^{-C_6\beta n^{2/3}}.
}
Now recall the multinomial formula 
\eq{
\sum_{\substack{0\le n_1,\ldots,n_8\le n\\n_1+\cdots +n_8=n}} f(n_1,\ldots, n_8) = 8^n.
}
A simple calculation using Stirling's formula shows that
\eq{
f(m_1,\ldots, m_8)8^{-n} \ge C_{7} n^{-4}.
}
Thus,
\eq{
\pp((N_1,\ldots, N_8)\in A) &\le \frac{\pp((N_1,\ldots, N_8)\in A)}{\pp(N_1=m_1,\ldots, N_8=m_8)}\\
&= \sum_{(n_1,\ldots, n_8)\in A} \frac{\pp(N_1=n_1,\ldots, N_8=n_8)}{\pp(N_1=m_1,\ldots, N_8=m_8)}\\
&\le e^{-C_6\beta n^{2/3}} \frac{8^n}{f(m_1,\ldots, m_8)}\le C_{8} n^4e^{-C_6\beta n^{2/3}}. 
}
Therefore for each $i$,
\eq{
\var(N_i) &\le \ee(N_i-m_i)^2\le C_5^2 n^{2/3} + n^2 \pp((N_1,\ldots,N_8)\in A)\\
&\le C_5^2 n^{2/3}+ C_{8}n^6 e^{-C_6\beta n^{2/3}}. 
}
The above inequality shows that
\[
\var(N_i)\le K(\beta) n^{2/3},
\]
where $K(\beta)$ is a decreasing function of $\beta$. 
\end{proof}
For any Borel set $A\subseteq [0,1)^3$, let 
\[
X(A) := \{X_j: X_j\in A\}.
\]
and let $N(A):=|X(A)|$.  
For each $k\ge 0$, let $\mf_k$ be the $\sigma$-algebra generated the random variables $\{N(D): D\in\dm_k\}$. Note that $\{\mf_k\}_{k\ge 0}$ is a filtration of $\sigma$-algebras. This filtration will play an important role in the subsequent discussion.
\begin{lmm}\label{condlmm}
Conditional on $\mf_k$, the random sets $\{X(D): D\in \dm_k\}$ are mutually independent. Moreover, for any $D\in \dm_k$, conditional on $\mf_k$, $X(D)$ has the same distribution as a scaled version of a point process from the measure $\mu_{N(D), 2^k\beta}$.
\end{lmm}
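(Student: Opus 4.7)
The plan is to exploit the hierarchical structure of $w$ by decomposing $H_n$ at level $k$ and observing that the joint density then factorizes over the cubes in $\dm_k$.

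I would first split the sum in $H_n(x_1,\ldots,x_n)$ according to whether each pair $(x_i,x_j)$ lies inside a common element of $\dm_k$ or in two distinct elements. For a pair in distinct cubes $D, D' \in \dm_k$, the value $w(x_i,x_j) = 2^{k(x_i,x_j)}$ depends only on the pair $(D,D')$: the separating level $k(x_i,x_j) \le k$ is simply the smallest $\ell$ at which the level-$\ell$ ancestors of $D$ and $D'$ differ, and it is independent of the positions of $x_i,x_j$ within $D,D'$. Hence the inter-cube contribution equals
\[ \sum_{\{D,D'\} \subset \dm_k,\, D\ne D'} w(D,D')\, N(D)\, N(D'), \]
which is manifestly $\mf_k$-measurable.

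For the intra-cube contribution, let $\phi_D : D \to [0,1)^3$ be the affine map that scales $D$ up by the factor $2^k$. For distinct $x,y \in D$, each dyadic sub-cube of $D$ at relative depth $j$ maps under $\phi_D$ to an element of $\dm_j$, so $k(\phi_D(x),\phi_D(y)) = k(x,y) - k$ and therefore $w(x,y) = 2^k\, w(\phi_D(x),\phi_D(y))$. Writing $I(D) = \{i : X_i \in D\}$, this yields
\[ \sum_{\substack{i<j\\ X_i,X_j\in D}} w(X_i,X_j) = 2^k\, H_{N(D)}\bigl((\phi_D(X_i))_{i\in I(D)}\bigr). \]
Plugging both pieces into \eqref{densdef}, the joint density of $(X_1,\ldots,X_n)$ conditional on the labelled partition $\{I(D) : D \in \dm_k\}$ (a refinement of $\mf_k$) factorizes as a product over $D \in \dm_k$, the $D$-th factor being proportional in the $(x_i)_{i\in I(D)}$ variables to $\exp(-2^k\beta\, H_{N(D)}((\phi_D(x_i))_{i\in I(D)}))$; after the change of variables $x \mapsto \phi_D(x)$ within each cube, this is exactly the density of $\mu_{N(D),2^k\beta}$.

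Thus, conditional on the labelled partition, the ordered tuples $((X_i)_{i\in I(D)})_{D\in\dm_k}$ are independent and each is obtained from a $\mu_{N(D),2^k\beta}$-sample by applying $\phi_D^{-1}$ coordinatewise. Passing to the unordered sets $X(D)$ and integrating out the labelled partition given $\mf_k$ (whose conditional law is uniform on partitions of $\{1,\ldots,n\}$ with prescribed block sizes $\{N(D)\}$, by exchangeability of $\mu_{n,\beta}$) preserves the product structure and gives the lemma. The whole argument is essentially bookkeeping anchored on the scaling identity $w(x,y) = 2^k w(\phi_D(x),\phi_D(y))$; the one point requiring a moment of care is the clean passage from conditioning on the labelled partition to conditioning on the coarser $\sigma$-algebra $\mf_k$.
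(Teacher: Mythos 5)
Your proposal is correct and follows essentially the same route as the paper: decompose $H_n$ at level $k$ into intra-cube and inter-cube contributions, observe that the inter-cube term is a function of the counts $\{N(D)\}$ alone, and conclude that the conditional density factorizes over the cubes. The paper states this tersely ("the claims follow easily"); you have simply written out the details it leaves implicit, in particular the scaling identity $w(x,y)=2^k w(\phi_D(x),\phi_D(y))$ that accounts for the inverse temperature $2^k\beta$, and the passage from the labelled partition to the coarser $\sigma$-algebra $\mf_k$.
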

\begin{proof}
Take any $k$. Note that the joint density of $(X_1,\ldots,X_n)$ at a point $(x_1,\ldots,x_n)$ may be written as 
\eq{
\frac{1}{Z(n,\beta)} \exp\biggl(-\beta\sum_{D\in \dm_k} H_D(x_1,\ldots,x_n) -\beta R_k(x_1,\ldots,x_n)\biggr),
}
where $H_D(x_1,\ldots,x_n)$ is the contribution due to the interactions between points in $D$, and $R_k(x_1,\ldots,x_n)$ is the contribution due to the interactions between points in different members of $\dm_k$. The crucial property of the potential $w$ is that $R_k(x_1,\ldots,x_n)$ is a function of $\{n_D: D\in \dm_k\}$, where $n_D = |\{j: x_j\in D\}|$. The claims  follow easily from this observation.
\end{proof}
Lemma \ref{condlmm} allows us to compute conditional means and variances.
\begin{lmm}\label{condcalc}
If $D\in \dm_k$ and $D'$ is a child of $D$, then  
\[
\ee(N(D')|\mf_k) = \frac{N(D)}{8}
\]
and
\eq{
\var(N(D')|\mf_k) &\le K(\beta) N(D)^{2/3},
}
where $K$ is the function from Lemma \ref{varlmm}.
\end{lmm}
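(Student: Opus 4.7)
The plan is to reduce to Lemma \ref{varlmm} using the conditional description of the process supplied by Lemma \ref{condlmm}. First I would invoke Lemma \ref{condlmm} with the given $k$: conditional on $\mf_k$, the configuration $X(D)$ inside the cube $D\in\dm_k$ is distributed as an affine rescaling of a sample from $\mu_{N(D),2^k\beta}$ on $[0,1)^3$. Because point counts are invariant under the rescaling map, the vector $(N(D_1'),\ldots,N(D_8'))$, where $D_1',\ldots,D_8'$ are the eight children of $D$, has the same conditional law as the vector $(N_1,\ldots,N_8)$ defined in Lemma \ref{varlmm} but for a system of $N(D)$ particles at inverse temperature $2^k\beta$ in place of $n$ particles at inverse temperature $\beta$.

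Given this identification, the mean statement is immediate: by Lemma \ref{varlmm} applied to the rescaled system, $\ee(N(D_i')\mid\mf_k)=N(D)/8$ for each child, which is the first claim. For the variance, Lemma \ref{varlmm} gives
\[
\var(N(D')\mid \mf_k)\le K(2^k\beta)\, N(D)^{2/3}.
\]
Since $k\ge 0$, we have $2^k\beta\ge \beta$, and $K$ is non-increasing in its argument (as noted in Lemma \ref{varlmm}), so $K(2^k\beta)\le K(\beta)$. This yields the stated bound $\var(N(D')\mid\mf_k)\le K(\beta)N(D)^{2/3}$.

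There is essentially no obstacle here beyond making the reduction precise: the content is entirely in Lemmas \ref{condlmm} and \ref{varlmm}, and the only mild point to check is that the notion of ``scaled version'' in Lemma \ref{condlmm} preserves the partition into children (so that the eight children of $D$ correspond to the eight elements $D_1,\ldots,D_8$ of $\dm_1$ after rescaling), which is immediate from the definition of dyadic subcubes. The cases $N(D)=0$ and $N(D)=1$ are handled by the convention that $\mu_{0,\beta}$ puts no points and $\mu_{1,\beta}$ places a single uniform point in the scaled cube, in which case both sides of the variance bound are trivially consistent.
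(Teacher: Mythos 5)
Your proposal is correct and follows exactly the paper's route: condition via Lemma \ref{condlmm} to identify the children's counts with the $(N_1,\ldots,N_8)$ of Lemma \ref{varlmm} for $N(D)$ particles at inverse temperature $2^k\beta$, then use the monotonicity $K(2^k\beta)\le K(\beta)$. The extra care you take with the rescaling preserving the dyadic partition and with the degenerate cases $N(D)\in\{0,1\}$ is harmless detail that the paper leaves implicit.
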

\begin{proof}
The formula for the conditional expectation follows from Lemma~\ref{condlmm} and symmetry, and the bound on the conditional variance follows from Lemma \ref{condlmm}, Lemma \ref{varlmm}, and the observation that $K(2^k\beta)\le K(\beta)$ since $K$ is a non-increasing function of $\beta$.
\end{proof}
The above lemma leads to the following conclusions about unconditional means and variances.
\begin{lmm}\label{expvarlmm}
For any $D\in \dm$, $\ee(N(D))=\leb(D)n$ and 
\[
\var(N(D)) \le 8K(\beta) \leb(D)^{2/3}n^{2/3},
\]
where $K$ is the function from Lemma \ref{varlmm}.
\end{lmm}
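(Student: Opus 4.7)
The plan is to exploit the filtration $\{\mf_k\}$ together with the conditional formulas of Lemma~\ref{condcalc} via a martingale decomposition. Fix $D\in\dm_k$ and let $D=D_k\subset D_{k-1}\subset\cdots\subset D_0=[0,1)^3$ be its chain of dyadic ancestors. Iterating the conditional-expectation identity $\ee(N(D_j)\mid \mf_{j-1})=N(D_{j-1})/8$ shows by downward induction that $\ee(N(D)\mid\mf_j)=N(D_j)/8^{k-j}$. Taking $j=0$ (where $N(D_0)=n$ is deterministic) immediately gives $\ee(N(D))=n/8^k=\leb(D)n$, disposing of the first claim.

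For the variance, I would write the Doob decomposition
\[
N(D)-\ee(N(D))=\sum_{j=1}^{k}M_j,\qquad M_j:=\ee(N(D)\mid\mf_j)-\ee(N(D)\mid\mf_{j-1}),
\]
so that by orthogonality of martingale differences, $\var(N(D))=\sum_{j=1}^{k}\ee[M_j^2]$. Using the explicit formulas above, $M_j=8^{-(k-j)}\bigl(N(D_j)-N(D_{j-1})/8\bigr)$, and hence by Lemma~\ref{condcalc} (applied to the child $D_j$ of $D_{j-1}$, and using that $K$ is non-increasing in $\beta$ to absorb the scaling factor $2^{j-1}\beta$),
\[
\var(M_j\mid\mf_{j-1})=\frac{\var(N(D_j)\mid\mf_{j-1})}{8^{2(k-j)}}\le\frac{K(\beta)\,N(D_{j-1})^{2/3}}{8^{2(k-j)}}.
\]
Taking expectations and using Jensen's inequality with the concave function $x\mapsto x^{2/3}$ together with $\ee N(D_{j-1})=n/8^{j-1}$ yields $\ee[M_j^2]\le K(\beta)n^{2/3}\,8^{-2(k-j)-2(j-1)/3}$.

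Summing over $j=1,\ldots,k$ gives a geometric series in $j$ with ratio $8^{4/3}=16$. Evaluating the sum and using $8^{-2k}\cdot 16^k=4^{-k}=8^{-2k/3}=\leb(D)^{2/3}$ collapses the exponents cleanly into
\[
\var(N(D))\le K(\beta)n^{2/3}\leb(D)^{2/3}\cdot 8^{2/3}\cdot\tfrac{16}{15}=\tfrac{64}{15}K(\beta)n^{2/3}\leb(D)^{2/3},
\]
which is bounded by $8K(\beta)n^{2/3}\leb(D)^{2/3}$ as required.

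The only step requiring care is the bookkeeping of the exponents of $8$: one must check that the growth factor $16^j$ coming from the conditional-variance bound is more than offset by the shrinkage factor $8^{-2(k-j)}$ coming from the martingale rescaling, and that the resulting geometric sum telescopes into exactly the power $\leb(D)^{2/3}$ predicted by scaling. Everything else is a direct application of the tower property and Lemma~\ref{condcalc}.
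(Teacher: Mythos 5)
Your proof is correct, and it is essentially the paper's argument in a different dress: the paper iterates the identity $\ee(N(D)^2)=\ee(\var(N(D)|\mf_{k-1}))+8^{-2}\ee(N(D')^2)$ down the ancestor chain, which is precisely the unrolled form of your Doob martingale-difference decomposition, and both proofs then invoke Lemma~\ref{condcalc}, Jensen's inequality for $x\mapsto x^{2/3}$, and the same geometric series (yielding the same constant $64/15\le 8$). No gaps.
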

\begin{proof}
Suppose that $D\in \dm_k$. The formula for the expectation follows easily by iterating the formula for the conditional expectation from Lemma \ref{condcalc}, and observing that $\leb(D)=8^{-k}$. Next, let $D'$ be the parent of $D$. Then by Lemma \ref{condcalc} and the formula for expected value,
\eq{
\ee(N(D)^2) &= \ee(N(D)^2 - (\ee(N(D)|\mf_{k-1}))^2) + \ee((\ee(N_{D}|\mf_{k-1}))^2)\\
&= \ee(\var(N(D)|\mf_{k-1})) + 8^{-2} \ee(N(D')^2)\\
&\le K(\beta) \ee(N(D')^{2/3}) + 8^{-2} \ee(N(D')^2)\\
&\le K(\beta) (\ee(N(D')))^{2/3} + 8^{-2} \ee(N(D')^2)\\
&= K(\beta) 4^{-k+1} n^{2/3} + 8^{-2}\ee(N(D')^2). 
}
Iterating this, we get
\eq{
\ee(N(D)^2) &\le K(\beta)n^{2/3}(4^{-k+1} + 8^{-2} 4^{-k+2} + 8^{-4} 4^{-k+3} + \cdots) + 8^{-2k}n^2\\
&\le 8K(\beta)4^{-k} n^{2/3} + 8^{-2k}n^2,
}
which completes the proof since $\ee(N(D)) = \leb(D)n = 8^{-k}n$.
\end{proof}
Now take any nonempty open set $U\subseteq [0,1)^3$ with regular boundary. Let $\uu$ be the set of all $D\in\dm$ such that $D\subseteq U$ but the parent cube of $D$ is not contained in $U$. 
\begin{lmm}\label{ulmm}
The set $U$ is the disjoint union of all elements of $\uu$.
\end{lmm}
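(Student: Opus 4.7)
The plan is to prove the two inclusions separately, using the tree structure of $\dm$ and the openness of $U$.

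First I would establish disjointness. Recall that for any two dyadic cubes $D_1, D_2 \in \dm$, either they are disjoint or one contains the other (this is immediate from the dyadic construction). Suppose $D_1, D_2 \in \uu$ are distinct and not disjoint; without loss of generality $D_1 \subsetneq D_2$. Then the parent $D_1'$ of $D_1$ satisfies $D_1' \subseteq D_2$, since $D_1'$ is the smallest ancestor of $D_1$ and $D_2$ is an ancestor of $D_1$ strictly larger than $D_1$. But $D_2 \subseteq U$, so $D_1' \subseteq U$, contradicting $D_1 \in \uu$. Hence distinct elements of $\uu$ are disjoint.

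Next I would show $U = \bigcup_{D\in\uu} D$. The inclusion $\bigcup_{D\in\uu} D \subseteq U$ is immediate from the definition of $\uu$. For the reverse inclusion, fix $x \in U$. Since $U$ is open in $[0,1)^3$, there exists $\delta > 0$ with $B(x,\delta) \cap [0,1)^3 \subseteq U$. For any $k$ with $2^{-k}\sqrt{3} < \delta$, the unique cube of $\dm_k$ containing $x$ has diameter less than $\delta$ and therefore lies inside $U$. Let $k(x)$ be the smallest nonnegative integer such that the element of $\dm_{k(x)}$ containing $x$ is contained in $U$, and let $D$ be that cube. If $k(x) = 0$, then $D = [0,1)^3$; in this case $D$ has no parent, and we interpret the condition defining $\uu$ as vacuously satisfied, so $D \in \uu$. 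If $k(x) \ge 1$, then by minimality of $k(x)$ the parent of $D$ (an element of $\dm_{k(x)-1}$ containing $x$) is not contained in $U$, so again $D \in \uu$. In either case $x \in D \in \uu$, which gives the desired inclusion.

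There is no real obstacle here; the only subtle point is the edge case $U = [0,1)^3$, where one must be careful that the root cube has no parent and so the condition "the parent of $D$ is not contained in $U$" must be read as vacuously true when $D = [0,1)^3$. With that convention, the two-step argument above is complete.
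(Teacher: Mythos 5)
Your proof is correct and follows essentially the same route as the paper's: openness gives a small dyadic cube around each point of $U$ contained in $U$, one passes to the highest such ancestor to land in $\uu$, and disjointness follows from the nesting property of dyadic cubes. You have simply spelled out the details (including the edge case $U=[0,1)^3$) that the paper leaves as "easy to see."
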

\begin{proof}
Since $U$ is open, each point in $U$ belongs to some dyadic cube that is contained in $U$. Some ancestor of this cube must belong to $\uu$. This shows that $U$ is the union of the members of $\uu$. It is easy to see that the elements of $\uu$ are disjoint.
\end{proof}
\begin{cor}\label{expcor}
$\ee(N(U))=\leb(U)n$.
\end{cor}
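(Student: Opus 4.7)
The plan is to combine Lemma~\ref{ulmm} (which partitions $U$ into the dyadic cubes in $\uu$) with the expectation formula from Lemma~\ref{expvarlmm} (which gives $\ee(N(D)) = \leb(D) n$ for each dyadic cube $D$).

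First, I would observe that since $\uu$ is a collection of pairwise disjoint sets whose union is $U$ (by Lemma~\ref{ulmm}), the counting random variable decomposes as
\[
N(U) = \sum_{D \in \uu} N(D),
\]
where the sum is over a countable (possibly infinite) collection of nonnegative random variables.

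Next, I would take expectations and swap the sum and the expectation. Since all terms $N(D)$ are nonnegative, Tonelli/monotone convergence legitimizes the interchange, even if $\uu$ is infinite. Then Lemma~\ref{expvarlmm} supplies $\ee(N(D)) = \leb(D) n$ for each $D \in \uu$, giving
\[
\ee(N(U)) = \sum_{D \in \uu} \ee(N(D)) = n \sum_{D \in \uu} \leb(D) = n \leb(U),
\]
where the last equality uses Lemma~\ref{ulmm} together with countable additivity of Lebesgue measure.

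There is no real obstacle here; the only point one must be slightly careful about is that $\uu$ may be infinite (which happens whenever $\partial U$ is not itself a finite union of dyadic faces), so the exchange of sum and expectation must be justified by nonnegativity rather than by finiteness. The regularity hypothesis on $\partial U$ is not actually needed for this particular corollary — it will enter later when controlling the variance.
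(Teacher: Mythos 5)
Your proposal is correct and matches the paper's proof essentially verbatim: both decompose $U$ into the countable disjoint collection $\uu$ via Lemma~\ref{ulmm}, interchange sum and expectation (justified by nonnegativity), and apply the expectation formula of Lemma~\ref{expvarlmm} together with countable additivity of Lebesgue measure. Your added remarks on Tonelli and on the regularity hypothesis being unnecessary here are accurate but not needed.
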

\begin{proof}
Just observe that by Lemma \ref{ulmm} and Lemma \ref{expvarlmm},
\[
\ee(N(U)) = \sum_{D\in \uu} \ee(N(D)) = \sum_{D\in \uu} \leb(D)n = \leb(U)n, 
\]
where we have implicitly used the fact that $\uu$ is a countable collection.
\end{proof}
For each $j$, let $\uu_j := \uu\cap \dm_j$. Let $\vv_j$ denote the set of all $D\in \dm_j$ that intersect both $U$ and $U^c$. Note that $\uu_j$ and $\vv_j$ do not overlap. For any dyadic cube $D$, let $p(D)$ denote the proportion of $D$ that belongs to $U$. Let $M_0 = \leb(U)n$ and for each $j\ge 1$, let
\eq{
M_j := \sum_{i=0}^j\sum_{D\in \uu_i} N(D) + \sum_{D\in \vv_j} p(D) N(D). 
}
\begin{lmm}\label{martingale}
The sequence $\{M_j\}_{j\ge 0}$ is a martingale with respect to the filtration $\{\mf_j\}_{j\ge 0}$.
\end{lmm}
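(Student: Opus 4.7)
The plan is to verify both the measurability of $M_j$ with respect to $\mf_j$ and the martingale identity $\ee(M_{j+1}\mid\mf_j)=M_j$. The measurability is essentially automatic: for any $D\in\dm_i$ with $i\le j$, the variable $N(D)$ is the sum of $N(D'')$ over the descendants $D''\in\dm_j$ of $D$, hence is $\mf_j$-measurable; $p(D)$ is a deterministic constant, so each summand defining $M_j$ is $\mf_j$-measurable.

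For the martingale identity the key is to rewrite $M_{j+1}$ using the children of cubes in $\vv_j$. I would first establish the combinatorial claim that every $D'\in\uu_{j+1}\cup\vv_{j+1}$ has its parent in $\vv_j$: if the parent lay entirely in $U$ it would force $D'\subseteq U$ with a parent also in $U$ (ruling out $D'\in\uu_{j+1}$ and $D'\in\vv_{j+1}$), while if the parent lay entirely in $U^c$ it could not contain a subcube that meets $U$. Consequently, grouping the new terms in $M_{j+1}$ by their parent $D\in\vv_j$ and classifying each of the eight children as entirely inside $U$ (contributing with weight $p(D')=1$, placed in $\uu_{j+1}$), entirely outside $U$ (weight $0$), or straddling $\partial U$ (weight $p(D')\in(0,1)$, placed in $\vv_{j+1}$), gives
\eq{
M_{j+1}-\sum_{i=0}^{j}\sum_{D\in\uu_i}N(D) = \sum_{D\in\vv_j}\sum_{D'\text{ child of }D} p(D')N(D').
}

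Now I would take conditional expectations. By Lemma~\ref{condcalc}, $\ee(N(D')\mid\mf_j)=N(D)/8$ for each child $D'$ of $D\in\vv_j$, so
\eq{
\ee\!\left(\sum_{D'\text{ child of }D} p(D')N(D')\,\Big|\,\mf_j\right) = \frac{N(D)}{8}\sum_{D'\text{ child of }D} p(D').
}
The identity $\sum_{D'\text{ child of }D} p(D') = 8\,p(D)$ follows at once from $\leb(D\cap U)=\sum_{D'}\leb(D'\cap U)$ together with $\leb(D')=\leb(D)/8$. Summing over $D\in\vv_j$ produces exactly $\sum_{D\in\vv_j} p(D)N(D)$, which is precisely the last term in the expression for $M_j$. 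Combined with the trivially $\mf_j$-measurable telescoping terms, this yields $\ee(M_{j+1}\mid\mf_j)=M_j$.

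The only mildly delicate point is the base case $j=0$: writing $M_0=\leb(U)n$ is consistent with the general formula provided we interpret the configuration at level $0$ as $\vv_0=\{[0,1)^3\}$ with $p([0,1)^3)=\leb(U)$ and $N([0,1)^3)=n$ (the case $U=[0,1)^3$ is trivial, since $N(U)=n$ a.s.). With this convention the calculation above also shows $\ee(M_1\mid\mf_0)=M_0$, completing the verification. The main obstacle is purely bookkeeping — tracking the partition of the children of boundary cubes into the three categories above — rather than any analytic difficulty.
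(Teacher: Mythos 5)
Your proof is correct and follows essentially the same route as the paper: condition on $\mf_j$, observe that every cube in $\uu_{j+1}\cup\vv_{j+1}$ is a child of a cube in $\vv_j$ (with children missing $U$ contributing weight $0$), apply the conditional expectation formula $\ee(N(D')\mid\mf_j)=N(D)/8$, and use the averaging identity $\sum_{D'}p(D')=8\,p(D)$. Your treatment of the three categories of children and of the base case $j=0$ is just a slightly more explicit bookkeeping of the same argument.
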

\begin{proof}
Take any $j\ge 1$. Then
\eq{
\ee(M_j|\mf_{j-1}) &= \sum_{i=0}^{j-1} \sum_{D\in \uu_i} N(D) + \sum_{D\in \uu_j} \ee(N(D)|\mf_{j-1}) \\
&\qquad + \sum_{D\in \vv_j} p(D) \ee(N(D)|\mf_{j-1})\\
&=  \sum_{i=0}^{j-1} \sum_{D\in \uu_i} N(D) + \sum_{D\in \uu_j\cup \vv_j} p(D) \ee(N(D)|\mf_{j-1})
}
Take any $D\in \vv_{j-1}$. Then each child of $D$ is either a member of $\uu_j$, or a member of $\vv_j$, or has no intersection with $U$. Conversely, every member of $\uu_j\cup \vv_j$ is the child of some member of $\vv_{j-1}$. Lastly, note that if $D_1,\ldots,D_8$ are the children of a dyadic cube $D$, then 
\[
p(D) = \frac{1}{8}\sum_{i=1}^8 p(D_i).
\]
Combining these observations and applying Lemma \ref{condcalc}, we get
\[
\sum_{D\in \uu_j\cup \vv_j} p(D) \ee(N(D)|\mf_{j-1}) = \sum_{D\in \vv_{j-1}} p(D) N(D),
\]
which completes the proof.
\end{proof}
For the remainder of this section, let $A(U)$ be a constant such that for all $0<\ep \le \diam(U)$,
\eeq{
\leb(\partial U_\ep)\le A(U)\ep. \label{audef}
}
By the regularity condition, we can choose $A(U)$ to be finite.
The martingale property  of $M_j$ and our previous calculations lead to the following conclusion.
\begin{lmm}\label{mjvar}
For any $j\ge 1$ such that $\sqrt{3}\cdot 2^{-j+1}\le \diam(U)$, 
\[
\var(M_j)\le C(\beta)A(U) n^{2/3} + \var(M_{j-1}),
\]
where $C(\beta)$ is a constant that depends only on $\beta$. 
\end{lmm}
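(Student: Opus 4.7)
The plan is to exploit the martingale orthogonality
\[
\var(M_j) - \var(M_{j-1}) = \ee\bigl[(M_j - M_{j-1})^2\bigr],
\]
which follows from Lemma \ref{martingale}, and to estimate the increment by a sum over the boundary cubes in $\vv_{j-1}$. The conditional independence of Lemma \ref{condlmm} together with the variance estimate of Lemma \ref{condcalc} will reduce everything to a bound on $|\vv_{j-1}|$, at which point the boundary regularity of $U$ enters.

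First I would regroup $M_j - M_{j-1}$ by parent cubes. As observed in the proof of Lemma \ref{martingale}, every element of $\uu_j \cup \vv_j$ is a child of some cube in $\vv_{j-1}$, while the remaining children of such a cube (those lying entirely outside $U$) contribute nothing since they are absent from $\uu_j$ and carry $p = 0$. Writing $D_1, \ldots, D_8$ for the children of $D' \in \vv_{j-1}$ and setting $p_i := p(D_i)$, $p' := p(D')$, this yields
\[
M_j - M_{j-1} \;=\; \sum_{D' \in \vv_{j-1}} \Delta_{D'}, \qquad \Delta_{D'} := \sum_{i=1}^8 (p_i - p')\, N(D_i).
\]
The identity $\sum_i p_i = 8 p'$ combined with Lemma \ref{condcalc} gives $\ee[\Delta_{D'} \mid \mf_{j-1}] = 0$, while Lemma \ref{condlmm} makes the $\Delta_{D'}$'s conditionally independent given $\mf_{j-1}$. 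Hence
\[
\ee\bigl[(M_j - M_{j-1})^2\bigr] \;=\; \sum_{D' \in \vv_{j-1}} \ee\bigl[\var(\Delta_{D'} \mid \mf_{j-1})\bigr].
\]

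Next I would bound each conditional variance. Using $\sum_i (p_i - p') = 0$, I would rewrite $\Delta_{D'} = \sum_i (p_i - p')(N(D_i) - N(D')/8)$ and apply Cauchy--Schwarz together with $|p_i - p'| \le 1$ to get $\Delta_{D'}^2 \le 8 \sum_i (N(D_i) - N(D')/8)^2$. Taking conditional expectation and invoking Lemma \ref{condcalc} yields $\var(\Delta_{D'} \mid \mf_{j-1}) \le 64\, K(\beta)\, N(D')^{2/3}$. Jensen's inequality and the expectation formula from Lemma \ref{expvarlmm} then give $\ee[N(D')^{2/3}] \le (\leb(D')\,n)^{2/3} = 4^{-(j-1)} n^{2/3}$ for any $D' \in \dm_{j-1}$.

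The final step is to count $|\vv_{j-1}|$ via the regularity hypothesis. Any $D' \in \vv_{j-1}$ is connected and meets both $U$ and $U^c$, so it intersects $\partial U$; since $\diam(D') = \sqrt{3}\cdot 2^{-(j-1)}$, it lies inside $\partial U_{\sqrt{3}\cdot 2^{-(j-1)}}$. The hypothesis $\sqrt{3}\cdot 2^{-j+1} \le \diam(U)$ permits an application of \eqref{audef}, yielding $8^{-(j-1)} |\vv_{j-1}| \le \sqrt{3}\, A(U)\, 2^{-(j-1)}$, i.e.\ $|\vv_{j-1}| \le \sqrt{3}\, A(U)\, 4^{j-1}$. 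Multiplying the three estimates cancels the $4^{\pm(j-1)}$ factors and produces the claimed bound with $C(\beta) = 64\sqrt{3}\, K(\beta)$. The main obstacle I anticipate is the initial combinatorial regrouping of $M_j - M_{j-1}$ into the $\Delta_{D'}$'s and checking that the parent--child relation lines up correctly with the functions $p(\cdot)$ and $N(\cdot)$; after this setup, the Cauchy--Schwarz, Jensen, and geometric counting steps are essentially bookkeeping.
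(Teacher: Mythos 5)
Your proof is correct and follows essentially the same route as the paper: martingale orthogonality reduces the claim to $\ee(\var(M_j\mid\mf_{j-1}))$, which is controlled via the conditional independence of Lemma \ref{condlmm}, the conditional variance bound of Lemma \ref{condcalc}, Jensen applied to $\ee(N(D')^{2/3})$, and the regularity of $\partial U$ at scale $2^{-j+1}$. The only (cosmetic) difference is bookkeeping: you group the increment by parent cubes and count $|\vv_{j-1}|$ directly, whereas the paper expands the conditional variance into covariances over sibling pairs in $\uu_j\cup\vv_j$ and bounds $\sum_D p(D)\,\leb(D)=\sum_D\leb(D\cap U)$ by the measure of the tube around $\partial U$.
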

\begin{proof}
By the martingale property,
\eeq{
\var(M_j) &= \ee(\var(M_j|\mf_{j-1})) + \var(\ee(M_j|\mf_{j-1}))\\
&= \ee(\var(M_j|\mf_{j-1})) + \var(M_{j-1}). \label{varvar}
}
Now,
\eeq{
\var(M_j|\mf_{j-1}) &= \var\biggl(\sum_{D\in \uu_j\cup \vv_j} p(D)N(D)\biggl|\mf_{j-1}\biggr)\\
&= \sum_{D,D'\in \uu_j\cup \vv_j} p(D)p(D') \cov(N(D), N(D')|\mf_{j-1}).\label{mjmj}
}
If $D$ and $D'$ have different parents, then $N(D)$ and $N(D')$ are conditionally independent by Lemma \ref{condlmm}, and hence the conditional covariance is zero. Otherwise, Lemma \ref{condcalc} and the Cauchy--Schwarz inequality imply that 
\[
|\cov(N(D), N(D')|\mf_{j-1})|\le K(\beta)N(D'')^{2/3},
\]
where $D''$ is the parent of $D$ and $D'$. 
Thus, by Lemma \ref{expvarlmm},
\eq{
|\ee(\cov(N(D), N(D')|\mf_{j-1})) | &\le K(\beta) (\leb(D'') n)^{2/3} \\
&= K(\beta) (8^{-j+1} n)^{2/3}.
}
On the other hand,  each $D\in \uu_j\cup \vv_j$ has at most $7$ sibling cubes that belong to $\uu_j\cup \vv_j$. Since $p(D)8^{-j} = p(D)\leb(D) = \leb(D\cap U)$, this shows that 
\eq{
\ee(\var(M_j|\mf_{j-1})) &\le K(\beta) (8^{-j+1}n)^{2/3}\sum_{D\in \uu_j\cup \vv_j} 7p(D)\\
&= 28K(\beta) n^{2/3}2^j\sum_{D\in \uu_j\cup \vv_j}\leb(D\cap U).
}
Note that each element of 
\[
\bigcup_{D\in \uu_j\cup \vv_j}( D\cap U)
\]
is within distance $\sqrt{3}\cdot 2^{-j+1}$ from $\partial U$. Since $\sqrt{3}\cdot 2^{-j+1}\le \diam(U)$, inequality~\eqref{audef} gives 
\[
\sum_{D\in \uu_j\cup \vv_j}\leb(D\cap U)\le  A(U) \sqrt{3}\cdot 2^{-j+1}.
\]
Consequently,
\[
\ee(\var(M_j|\mf_{j-1})) \le C(\beta) A( U) n^{2/3},
\]
where $C(\beta)$ depends only on $\beta$. 
The proof is completed by plugging this bound into \eqref{varvar}. 
\end{proof}
We now have all the ingredients for proving the following theorem, which implies Theorems \ref{macthm} and \ref{micthm} and special cases.
\begin{thm}[Hyperuniformity at all scales]\label{mainthm}
Let $U$ and $N(U)$ be as in Theorem~\ref{macthm}. Suppose that $\diam(U)\ge n^{-1/3}$. Let $A(U)$ be the constant defined in \eqref{audef}. Then 
\[
\ee(N(U)) = \leb(U)n
\]
and 
\eq{
\var(N(U))&\le C(\beta)A(U)n^{2/3}\log (4n^{1/3}\diam(U))+ C(\beta)\leb(U)^{2/3}n^{2/3},
}
where $C(\beta)$ is a constant that depends only on $\beta$. 
\end{thm}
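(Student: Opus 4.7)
The expectation identity is just Corollary~\ref{expcor}. For the variance, the strategy is to identify $\var(N(U))$ with $\lim_{j\to\infty}\var(M_j)$ for the martingale $M_j$ from Lemma~\ref{martingale}, and then to bound each conditional-variance increment $\ee[\var(M_i\mid\mf_{i-1})] = \var(M_i)-\var(M_{i-1})$ in three regimes. The almost sure convergence $M_j\to N(U)$ holds because $\pp(X_k\in\partial U)=0$ for each $k$ (the marginal density of $X_k$ is bounded with respect to Lebesgue measure, and $\partial U$ is null), so with probability one every $X_k$ eventually lands in some cube of $\uu$ and the $\vv_j$-contribution disappears; $L^2$ convergence will follow from a uniform variance bound.

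The central refinement of the computation carried out in Lemma~\ref{mjvar} is the pointwise inequality $N(D'')^{2/3}\le N(D'')$, which gives
\[
\ee\bigl[N(D'')^{2/3}\bigr]\le \min\bigl((\leb(D'')n)^{2/3},\,\leb(D'')n\bigr).
\]
Combined with the observation that every $D\in\uu_i\cup\vv_i$ satisfies $D\cap U\subseteq \partial U_{\sqrt{3}\cdot2^{-i+1}}$, so that
\[
\sum_{D\in\uu_i\cup\vv_i}\leb(D\cap U)\le \min\bigl(\leb(U),\,A(U)\sqrt{3}\cdot 2^{-i+1}\bigr),
\]
the same computation as in Lemma~\ref{mjvar} yields three distinct bounds on the increment, to be used in three ranges of $i$. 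Let $j_1$ be the smallest index with $\sqrt{3}\cdot 2^{-j_1+1}\le\diam(U)$, so that $2^{j_1}\asymp 1/\diam(U)$, and let $j_2$ be the smallest index with $\leb(D'')n=8^{-j_2+1}n\le 1$, so that $2^{j_2}\asymp n^{1/3}$.

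For $i<j_1$, I use the Jensen bound $(\leb(D'')n)^{2/3}$ together with the trivial $\sum\leb(D\cap U)\le\leb(U)$; the increment is at most $C(\beta)n^{2/3}2^i\leb(U)$, and the geometric sum up to $j_1$ gives $C(\beta)n^{2/3}\leb(U)\cdot 2^{j_1}\le C(\beta)n^{2/3}\leb(U)/\diam(U)$, which is at most $C(\beta)\leb(U)^{2/3}n^{2/3}$ by the isoperimetric inequality $\leb(U)\le\diam(U)^3$. For $j_1\le i\le j_2$, Lemma~\ref{mjvar} applies directly, giving a constant increment $C(\beta)A(U)n^{2/3}$ and a total of $C(\beta)A(U)n^{2/3}(j_2-j_1)\le C(\beta)A(U)n^{2/3}\log(4n^{1/3}\diam(U))$. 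For $i>j_2$, I switch to the sharper inequality $\ee[N(D'')^{2/3}]\le\leb(D'')n=8^{-i+1}n$ combined with the boundary bound $A(U)\sqrt{3}\cdot 2^{-i+1}$, which yields an increment $\lesssim K(\beta)A(U)n\cdot 2^{-i}$; summing geometrically from $j_2+1$ gives a tail contribution $\lesssim K(\beta)A(U)n\cdot 2^{-j_2}\lesssim K(\beta)A(U)n^{2/3}$.

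Adding the three contributions produces the stated bound. The one delicate ingredient is the third regime: if one naively iterated Lemma~\ref{mjvar} to infinity, the constant increment would accumulate without bound, so the crux is to exploit $N^{2/3}\le N$ at the microscopic scale where occupied cells are typically empty and Jensen is wasteful. This is the only place where the specific exponent $2/3$ in Lemma~\ref{expvarlmm} interacts with the scale $n^{1/3}$ in a non-trivial way, and it is also what forces the characteristic $n^{1/3}$ correlation length of the three-dimensional hierarchical Coulomb gas into the final answer.
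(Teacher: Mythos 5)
Your proof is correct, and although it rests on the same martingale $\{M_j\}$ and the same increment computation as Lemma \ref{mjvar}, it organizes the argument differently from the paper at both ends of the scale range. The paper does not sum increments over all levels: it stops the filtration at a single microscopic level $k$ with $2^{-k}\asymp n^{-1/3}$, uses the exact identity $\ee(N(U)\mid\mf_k)=M_k$ to write $\var(N(U))=\ee(\var(N(U)\mid\mf_k))+\var(M_k)$, and bounds the first term in one shot via conditional independence across $\dm_k$ and the second-moment bound $\var(N(D\cap U)\mid\mf_k)\le p(D)N(D)^2$. Your regime $i>j_2$ reaches the same $A(U)n^{2/3}$ by the pointwise inequality $N^{2/3}\le N$, which turns $\ee(N(D'')^{2/3})\le\leb(D'')n$ into a geometrically summable tail below the scale where cells are typically empty; this is a clean substitute that also makes transparent \emph{why} the increments stop accumulating there. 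At the coarse end, the paper bounds $\var(M_l)$ directly ($U$ meets at most $64$ cubes of $\dm_l$, each contributing at most $C(\beta)\leb(U)^{2/3}n^{2/3}$ by Lemma \ref{expvarlmm}), whereas you sum the increments for $i<j_1$ using $\sum\leb(D\cap U)\le\leb(U)$ together with the isodiametric bound $\leb(U)\le C\,\diam(U)^3$; both yield the $\leb(U)^{2/3}n^{2/3}$ term. Your route requires one extra (easy) verification that the paper avoids, namely $M_j\to N(U)$ almost surely and in $L^2$, which you correctly reduce to $\pp(X_i\in\partial U)=0$ plus boundedness of the martingale. Two small points to tidy up: under the bare hypothesis $\diam(U)\ge n^{-1/3}$ one can have $j_2<j_1$ (by at most two levels, since $2^{j_1-j_2}<2\sqrt{3}$), in which case the middle regime is empty and the straggling levels between $j_2$ and $j_1$ must be absorbed into the regime-one estimate rather than regime three, because the boundary bound \eqref{audef} is not available for $i<j_1$; and the extra $C(\beta)A(U)n^{2/3}$ from regime three is absorbed into the logarithmic term only because $4n^{1/3}\diam(U)\ge 4$, which is exactly the standing hypothesis.
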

\begin{proof}
Throughout this proof, $C(\beta)$ will denote any constant that depends only on $\beta$. The value of $C(\beta)$ may change from line to line or even within a line. 

The formula for the expectation follows from Corollary \ref{expcor}. It remains to prove the variance bound. Choose $k$ such that
\[
\frac{1}{2}n^{-1/3}\le \sqrt{3} \cdot 2^{-k}\le n^{-1/3}. 
\]
Note that by Lemma \ref{ulmm}, any point in $U$ either belongs to some $D\in \uu_j$ for some $j\le k$, or belongs to some $D\in\uu_j$ for some $j> k$. In the latter case, there is an ancestor of $D$ that belongs to $\vv_{k}$. Thus, 
\eq{
U = \biggl(\bigcup_{j=0}^{k}\uu_j\biggr)\cup \biggl(\bigcup_{D\in \vv_{k}} (D\cap U)\biggr), 
}
and so
\eq{
N(U) = \sum_{j=0}^{k} \sum_{D\in \uu_j} N(D) + \sum_{D\in \vv_{k}} N(D\cap U).
}
Consequently, by Lemma \ref{condlmm}, Lemma \ref{expvarlmm} and Corollary \ref{expcor},
\eq{
\ee(N(U)|\mf_{k}) &= \sum_{j=0}^{k} \sum_{D\in \uu_j} N(D) + \sum_{D\in \vv_{k}} \ee(N(D\cap U)|\mf_{k})\\
&=  \sum_{j=0}^{k} \sum_{D\in \uu_j} N(D) + \sum_{D\in \vv_k} p(D)N(D) = M_k.
}
Therefore,
\eeq{
\var(N(U)) &= \ee(\var(N(U)|\mf_k)) + \var(\ee(N(U)|\mf_k))\\
&=  \ee(\var(N(U)|\mf_k)) + \var(M_k).\label{numk}
}
Given $\mf_k$, the random variables $\{N(D\cap U):D\in \dm_k\}$ are independent by Lemma~\ref{condlmm}. Therefore, by Lemma \ref{expvarlmm} and Corollary \ref{expcor},
\eq{
\var(N(U)|\mf_k) &= \var\biggl(\sum_{D\in \vv_k} N(D\cap U)\biggl|\mf_k\biggr)= \sum_{D\in \vv_k} \var(N(D\cap U)|\mf_k)\\
&\le \sum_{D\in \vv_k} \ee(N(D\cap U)^2|\mf_k)\\
&\le \sum_{D\in \vv_k} \ee(N(D\cap U)|\mf_k) N(D) = \sum_{D\in \vv_k} p(D) N(D)^2. 
}
By Lemma \ref{expvarlmm} and our choice of $k$, 
\[
\ee(N(D)^2)= \var(N(D))+(\ee(N(D)))^2 \le C(\beta) 
\]
for all $D\in \vv_k$. Also, each element of 
\[
\bigcup_{D\in \vv_k} (D\cap U) 
\]
is within distance $\sqrt{3}\cdot 2^{-k}$ of $\partial U$, and $p(D) 8^{-k} = \leb(D\cap U)$. Since 
\[
\sqrt{3}\cdot 2^{-k}\le n^{-1/3}\le \diam(U)
\]
 by our choice of $k$ and the assumption that $\diam(U)\ge n^{-1/3}$, this gives
\eq{
\ee(\var(N(U)|\mf_k)) &\le C(\beta)8^k\sum_{D\in \vv_k} \leb(D\cap U) \\
&\le C(\beta)8^kA(U) 2^{-k}\\
&= C(\beta)A(U)4^k \le C(\beta)A(U)n^{2/3}.
}
Let $l$ be the smallest integer such that $\sqrt{3}\cdot 2^{-l} \le \diam(U)$. Note that $l\le k$. 
Together with \eqref{numk} and Lemma \ref{mjvar}, the above inequality shows that
\eq{
\var(N(U))\le C(\beta)A(U)n^{2/3}(k-l+1) + \var(M_l).
}
By the definition of $l$, $\uu_i$ if empty for all $i<l$. Therefore
\[
M_l = \sum_{D\in \uu_l\cup \vv_l} p(D)N(D). 
\]
Note that for any $D\in \uu_l\cup \vv_l$, Lemma \ref{expvarlmm} gives
\eq{
\var(p(D)N(D)) &= p(D)^2\var(N(D))\le C(\beta)p(D)^2 \leb(D)^{2/3}n^{2/3}\\
&\le C(\beta)(p(D)\leb(D))^{2/3}n^{2/3}\\
&= C(\beta)\leb(D\cap U)^{2/3} n^{2/3}\le C(\beta)\leb(U)^{2/3}n^{2/3}. 
}
Moreover, it is easy to see that $U$ intersects at most $64$ members of $\dm_l$, and therefore $|\uu_l \cup \vv_l|\le 64$. From these observations, we get
\eq{
\var(M_l)\le C(\beta) \leb(U)^{2/3}n^{2/3}. 
}
Finally, note that by the lower bound on $\sqrt{3}\cdot 2^{-k}$ and the upper bound on $\sqrt{3}\cdot 2^{-l}$, we get 
\[
2^{k-l} \le 2n^{1/3}\diam(U),
\]
and hence $k-l+1\le \log_2(4n^{1/3}\diam(U))$. 
This completes the proof of the theorem. 
\end{proof}

\begin{proof}[Proof of Theorem \ref{macthm}] 
This is a direct application of Theorem \ref{mainthm}. The condition $\diam(U)\ge n^{-1/3}$ is irrelevant because the variance bound can be enforced for small $n$ by adjusting the constant $C(U,\beta)$.
\end{proof}

\begin{proof}[Proof of Theorem \ref{micthm}]
Let $V := n^{-1/3}\lambda U + x$. Note that $N_x(\lambda U)=N(V)$.  Also, note that 
\eq{
\leb(V) &= \lambda^3n^{-1}\leb(U),\\
A(V)&=\lambda^2n^{-2/3}A(U),\\
\diam(V) &= \lambda n^{-1/3} \diam(U).
}
In particular, the condition $\diam(V)\ge n^{-1/3}$ is equivalent to $\diam(\lambda U)\ge 1$. The proof is now just an  application of Theorem \ref{mainthm}, and the observation that since $x\in (0,1)^3$, $V$ is eventually contained in $(0,1)^3$ as $n$ gets large.
\end{proof}

Finally, let us prove Theorem \ref{linearthm}.

\begin{proof}[Proof of Theorem \ref{linearthm}]
Here $C(\beta)$ denotes any constant that depends only on $\beta$. Let $f(D)$ be the average value of $f$ in a dyadic square $D\in\dm$. For each $k$, let $f_k$ be the function that is identically equal to  $f(D)$ within each $D\in \dm_k$. Let
\eq{
W_k := X(f_k).
}
By Lemma \ref{condlmm} and Lemma \ref{condcalc}, it is easy to see that $\{W_k\}_{k\ge 0}$ is martingale with respect to the filtration $\{\mf_k\}_{k\ge 0}$. Moreover, for any $k$,  
\eeq{
\ee(X(f)|\mf_k) = X(f_k). \label{xfk}
}
Now choose $k$ such that
\[
n^{-1/3}\le 2^{-k}\le 2n^{-1/3}. 
\]
Then by \eqref{xfk} and the martingale property of $\{W_j\}_{j\ge 0}$,
\eeq{
\var(X(f)) &= \ee(\var(X(f)|\mf_k)) + \sum_{j=1}^{k}\ee(\var(X(f_j)|\mf_{j-1})). \label{varxf}
}
Take any $j$. For each $D\in \dm_{j-1}$, let $c(D)$ denote the set of $8$ children of $D$. By Lemma \ref{condlmm} and Lemma \ref{condcalc},
\eq{
&\var(X(f_j)|\mf_{j-1}) = \var\biggl(\sum_{D\in \dm_j} f(D)N(D) \biggl|\mf_{j-1}\biggr)\\
&= \sum_{D\in \dm_{j-1}} \var\biggl(\sum_{D'\in c(D)} f(D')N(D')\biggl|\mf_{j-1}\biggr)\\
&= \sum_{D\in \dm_{j-1}} \ee\biggl(\biggl(\sum_{D'\in c(D)} f(D')N(D') - f(D)N(D)\biggr)^2\biggl|\mf_{j-1}\biggr).
} 
Now notice that for any $D\in \dm_{j-1}$, 
\eq{
&\sum_{D'\in c(D)} f(D')N(D') - f(D)N(D) \\
&= \sum_{D'\in c(D)} (f(D')- f(D))\biggl(N(D')-\frac{N(D)}{8}\biggr).
}
Recall that $L$ is the Lipschitz constant of $f$. For any $D'\in c(D)$,
\[
|f(D')-f(D)|\le \sqrt{3} L 2^{-j+1}.
\]
Thus, 
\eq{
&\biggl(\sum_{D'\in c(D)} (f(D')- f(D))\biggl(N(D')-\frac{N(D)}{8}\biggr)\biggr)^2 \\
&\le 4^{-j+2} L^2 \biggl(\sum_{D'\in c(D)} \biggl|N(D')-\frac{N(D)}{8}\biggr|\biggr)^2\\
&\le 4^{-j+4}L^2\sum_{D'\in c(D)} \biggl(N(D')-\frac{N(D)}{8}\biggr)^2.
}
Therefore, by Lemma \ref{condcalc}, 
\eq{
&\ee\biggl(\biggl(\sum_{D'\in c(D)} f(D')N(D') - f(D)N(D)\biggr)^2\biggl|\mf_{j-1}\biggr)\\
&\le 4^{-j+4} L^2 \sum_{D'\in c(D)}\var(N(D')|\mf_{j-1})\le 4^{-j+6} L^2 K(\beta)N(D)^{2/3}.
}
Consequently, by Lemma \ref{expvarlmm},
\eq{
&\ee(\var(X(f_j)|\mf_{j-1})) \le C(\beta)L^2 4^{-j}\sum_{D\in \dm_{j-1}}\ee(N(D)^{2/3})\\
&\le C(\beta) L^24^{-j}\sum_{D\in \dm_{j-1}}(\ee(N(D)))^{2/3}\le C(\beta) L^22^{-j}n^{2/3}.
}
Next, for $D\in \dm_k$, let
\[
s(D) := \sum_{j\, : \, X_j\in D} f(X_j),
\]
so that
\[
X(f) = \sum_{D\in \dm_k} s(D). 
\]
Then by Lemma \ref{condlmm},
\eq{
\var(X(f)|\mf_k) &= \sum_{D\in \dm_k} \var(s(D)|\mf_k)\\
&\le \sum_{D\in \dm_k}\ee((s(D)-f(D)N(D))^2|\mf_k). 
}
By the Lipschitz condition,
\eq{
|s(D)-f(D)N(D)|\le \sqrt{3}L 2^{-k}N(D)
}
for each $D\in \dm_k$. Thus, by Lemma \ref{expvarlmm} and our choice of $k$,
\eq{
\ee((s(D)-f(D)N(D))^2) &\le 4^{-k+1}L^2 \ee(N(D)^2)\le C(\beta)L^2 4^{-k}.
}
Consequently,
\eq{
\ee(\var(X(f)|\mf_k)) &\le C(\beta)L^24^{-k}|\dm_k|\le C(\beta) L^22^k\le C(\beta)L^2 n^{1/3}.
}
The proof is now easily completed by combining the steps. 
\end{proof}

\subsection{Proofs of the lower bounds}
Let us now prove Theorem \ref{lowthm}. We will continue using the notations introduced in the previous sections. We  need to prove some simple geometric facts. Let 
\[
\mt:= \{z+[0,1)^3: z\in \zz^3\}. 
\] 
Our first geometric lemma is very simple.
\begin{lmm}\label{geomlmm0}
Let $\mt$ be as above. Take any $D\in \mt$ and any $x\in D$. Let $\delta$ be the distance of $x$ from the boundary of $D$. Then any plane through $x$ bifurcates $D$ into two parts, each of which has volume at least $2\pi \delta^3/3$. 
\end{lmm}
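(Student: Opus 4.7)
The plan is essentially a one-line geometric observation. Since $\delta$ is the distance from $x$ to $\partial D$, the open Euclidean ball $B(x,\delta)$ of radius $\delta$ centered at $x$ is entirely contained in $D$. Any plane $\Pi$ passing through $x$ is a diametral plane of $B(x,\delta)$, so it divides $B(x,\delta)$ into two closed hemispheres of equal volume $\tfrac{1}{2}\cdot\tfrac{4\pi\delta^3}{3}=\tfrac{2\pi\delta^3}{3}$.

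Now $\Pi$ partitions $\rr^3$ into two closed half-spaces $H_+$ and $H_-$, and correspondingly bifurcates $D$ into $D\cap H_+$ and $D\cap H_-$. Each hemisphere of $B(x,\delta)$ lies in one of these half-spaces, and both hemispheres are contained in $D$. Therefore each of $D\cap H_+$ and $D\cap H_-$ contains an entire hemisphere of $B(x,\delta)$, and so has volume at least $\tfrac{2\pi\delta^3}{3}$.

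There is no real obstacle here: the unit cube structure in the definition of $\mt$ plays no essential role beyond guaranteeing that $D$ is a convex set with a well-defined interior, and the bound is the hemisphere volume, which is independent of the shape of $D$ beyond containing $B(x,\delta)$. The only thing to be careful about is the measure-zero issue of points actually lying on $\Pi$, but since this set has Lebesgue measure zero it does not affect the volumes of the two parts. The lemma follows immediately.
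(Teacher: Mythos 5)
Your proof is correct and is essentially identical to the paper's: both arguments note that $B(x,\delta)\subseteq D$, that any plane through $x$ splits this ball into two hemispheres of volume $2\pi\delta^3/3$ each, and that each hemisphere lies in one of the two pieces of $D$. Nothing is missing.
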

\begin{proof}
The open ball of radius $\delta$ around $x$ is contained in $D$. Any plane $P$ through $x$ bifurcates this ball into two parts of volume $2\pi \delta^3/3$ each. The proof is completed by observing that these  two hemispheres are contained in the two parts of $D$ obtained by bifurcating using $P$. 
\end{proof}
The second lemma is an easy fact about intervals.
\begin{lmm}\label{intlmm}
Let $I$ be a closed interval of the real line of length at least $\delta\in [0,1]$. Then $I$ has a closed subinterval $J$ of length $\delta/4$ such that any integer is at a distance at least $\delta/4$ from $J$.
\end{lmm}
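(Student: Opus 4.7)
The plan is to describe the "forbidden" and "allowed" regions explicitly and show that $I$ must meet a single allowed region in an interval of length at least $\delta/4$, from which any length-$\delta/4$ subinterval serves as $J$.

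First I introduce notation. Set $G_n := [n+\delta/4, n+1-\delta/4]$ and $B_n := [n-\delta/4, n+\delta/4]$ for $n\in\zz$. These tile $\rr$ (up to endpoint overlaps), with each $G_n$ of length $1-\delta/2$ and each $B_n$ of length $\delta/2$. Since $\delta\le 1$, we have $1-\delta/2\ge 1/2\ge \delta/4$, so each $G_n$ is long enough to contain a candidate subinterval. Any closed subinterval $J$ of any $G_n$ is automatically at distance $\ge\delta/4$ from every integer, so the goal reduces to producing an $n$ such that $|I\cap G_n|\ge \delta/4$: once we have this, $I\cap G_n$ is a closed subinterval of $I$, and any closed sub-subinterval of length $\delta/4$ serves as $J$.

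To produce such $n$, I argue by contradiction and suppose $|I\cap G_n|<\delta/4$ for every $n$. I then split into cases according to how many bad intervals $B_n$ the interval $I$ meets. If $I$ meets no $B_n$, then $I\subseteq G_m$ for a single $m$, giving $|I\cap G_m|=|I|\ge\delta\ge\delta/4$, a contradiction. If $I$ meets exactly one $B_n$, then $I\cap G$ lies in $G_{n-1}\cup G_n$, so
\[
|I|=|I\cap B_n|+|I\cap G_{n-1}|+|I\cap G_n|<\tfrac{\delta}{2}+2\cdot\tfrac{\delta}{4}=\delta,
\]
again contradicting $|I|\ge\delta$. Finally, if $I$ meets two bad intervals $B_{n_1}$ and $B_{n_2}$ with $n_1<n_2$, then $I\supseteq[n_1+\delta/4,n_2-\delta/4]\supseteq G_{n_1}$, so $|I\cap G_{n_1}|=1-\delta/2\ge\delta/4$, a contradiction. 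All cases fail, so the desired $n$ exists and the lemma follows.

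There is no serious obstacle here; the only thing to be careful about is the case analysis, in particular noting that when $I$ straddles two bad intervals it must completely contain the good interval in between, which automatically provides an intersection of length at least $\delta/4$. The hypothesis $\delta\in[0,1]$ is used precisely to ensure $1-\delta/2\ge\delta/4$.
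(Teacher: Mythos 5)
Your proof is correct. It rests on the same elementary observation as the paper's argument --- the set of points at distance at least $\delta/4$ from $\zz$ is a union of ``good'' intervals $G_n$ of length $1-\delta/2\ge \delta/4$, and one only needs $I$ to overlap one of them in length $\ge\delta/4$ --- but you execute it by contradiction with a measure-counting case analysis on how many bad intervals $B_n$ the interval $I$ meets, whereas the paper argues directly: if $I$ contains no integer one shrinks $I$ by $\delta/4$ at each end, and if $I$ contains an integer $n$ one takes $J=[n+\delta/4,n+\delta/2]$ or $J=[n-\delta/2,n-\delta/4]$ explicitly. Your version is slightly longer but makes the role of the hypothesis $\delta\le 1$ (namely $1-\delta/2\ge\delta/4$) more transparent; the only step worth double-checking, which you handle correctly, is that in the one-bad-interval case $I$ is genuinely contained in $G_{n-1}\cup B_n\cup G_n$, so that the three lengths really do add up to $|I|$.
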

\begin{proof}
If $I$ contains no integers, then we can take $J$ to be an interval of length $\delta/4$ that is at distance at least $\delta/4$ from each endpoint of $I$. If $I$ contains an integer $n$, then at least one of the two intervals $[n, n+\delta/2]$ and $[n-\delta/2, n]$ must be contained in $I$. In the first case  take $J= [n+\delta/4, n+\delta/2]$ and in the second case take $J=[n-\delta/2, n-\delta/4]$. Since $\delta\le 1$, there is no integer within distance $\delta/4$ from $J$. 
\end{proof}
The next lemma is intuitively obvious but a little tedious to prove. The constants are probably not optimal, but that does not matter for us.
\begin{lmm}\label{geomlmm}
Take any $x\in \rr^3$ and a unit vector $u = (u_1,u_2,u_3)\in S^2$. Let $P$ be the plane that contains $x$ and is perpendicular to $u$. Suppose that
\eeq{
\min\{|u_1|, |u_2|, |u_3|\} \ge 0.1.\label{u1u2u3}
}
Then there is an element $D\in \mathcal{T}$, within Euclidean distance $\sqrt{402}$ from $x$, which  is bifurcated by the plane $P$ in such a way that each part has volume at least $6\times 10^{-8}$.
\end{lmm}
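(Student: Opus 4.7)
The plan is to reduce to Lemma \ref{geomlmm0}. Namely, it suffices to find a point $y \in P$ within Euclidean distance $\sqrt{402}$ of $x$ whose three coordinates are all at distance at least $\delta := 1/160$ from $\zz$. If such a $y$ exists, letting $D \in \mathcal{T}$ be the unique unit cube containing it, the point $y$ lies at distance $\ge \delta$ from $\partial D$, so Lemma \ref{geomlmm0} bifurcates $D$ by $P$ into two pieces each of volume at least
\[
\frac{2\pi \delta^3}{3} = \frac{2\pi}{3 \cdot 160^3} > 5 \times 10^{-7},
\]
which is comfortably above $6 \times 10^{-8}$.

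Since $|u_3| \ge 0.1 > 0$, I would parametrize $P$ as the graph $(s, t, f(s, t))$ over the $(s, t)$-plane, where $f(s, t) := x_3 + a(s - x_1) + b(t - x_2)$ with $a := -u_1/u_3$ and $b := -u_2/u_3$. The hypothesis that each $|u_i|$ lies in $[0.1, 1]$ forces $|a|, |b| \in [0.1, 10]$. The task reduces to selecting $(s, t) \in [x_1, x_1 + 1] \times [x_2, x_2 + 1]$ so that $s$, $t$, and $f(s, t)$ are simultaneously bounded away from $\zz$.

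First, apply Lemma \ref{intlmm} with $\delta = 1$ to both $[x_1, x_1 + 1]$ and $[x_2, x_2 + 1]$ to obtain subintervals $J_s$ and $J_t$ of length $1/4$, each at distance $\ge 1/4$ from $\zz$. Fix any $t_0 \in J_t$; then $s \mapsto f(s, t_0)$ is affine with nonzero slope $a$, so its image on $J_s$ is a closed interval of length $|a|/4 \ge 1/40$. Apply Lemma \ref{intlmm} a second time to this image with parameter $\min(|a|/4, 1)$: the resulting sub-subinterval of $f$-values sits at distance at least $\min(|a|/16, 1/4) \ge 1/160$ from $\zz$. Pulling this back through the affine map $f(\cdot, t_0)$ gives a nonempty $J_s' \subseteq J_s$ on which $f(s, t_0)$ avoids a $1/160$-neighborhood of $\zz$, while $s$ itself retains the $1/4$ gap inherited from $J_s$. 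Choose $s_0 \in J_s'$ and set $y := (s_0, t_0, f(s_0, t_0))$; by construction every coordinate of $y$ is at distance $\ge 1/160$ from $\zz$.

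Finally, I would verify the distance bound: $|s_0 - x_1| \le 1$, $|t_0 - x_2| \le 1$, and $|f(s_0, t_0) - x_3| \le |a| + |b| \le 20$, so
\[
|y - x| \le \sqrt{1 + 1 + 400} = \sqrt{402},
\]
and the cube $D \ni y$ lies within $\sqrt{402}$ of $x$. The only subtle point in this outline is composing the two applications of Lemma \ref{intlmm}: I need to know that shrinking $J_s$ to $J_s'$ does not ruin the $1/4$ gap of the $s$-coordinate, which is automatic from $J_s' \subseteq J_s$. Everything else is bookkeeping, and the constants $\sqrt{402}$ and $6 \times 10^{-8}$ are clearly suboptimal but sufficient.
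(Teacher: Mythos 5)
Your proof is correct and follows essentially the same strategy as the paper's: both reduce to Lemma \ref{geomlmm0} by locating a point of $P$ near $x$ whose three coordinates all lie at a fixed distance from $\zz$, via repeated application of Lemma \ref{intlmm}. The only difference is that the paper searches along a single line $x+\alpha y$ contained in $P$ (with $|y_1|=|y_2|=1$ and $|y_3|\ge 0.2$), whereas you parametrize $P$ as a graph over the $(s,t)$-plane and freeze $t$ before the third application of Lemma \ref{intlmm}; the bookkeeping lands on the same constant $\sqrt{402}$ and a volume bound comfortably above $6\times 10^{-8}$ either way.
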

\begin{proof}
Take any $x = (x_1,x_2,x_3)\in \rr^3$ and $u=(u_1,u_2, u_3)\in S^2$ as in the statement of the lemma.  Let $P_0$ be the plane with normal vector $u$ that contains the origin. Define 
\eq{
y_1= \textup{sign}(u_1),\ \ y_2 = \textup{sign}(u_2), \ \ y_3 = -\frac{|u_1|+|u_2|}{u_3}. 
}
Then $y= (y_1,y_2,y_3)\in P_0$. Also, we have $|y_1|=1$, $|y_2|=1$, and by condition~\eqref{u1u2u3} and the fact that $|u_3|\le 1$,
\[
|y_3| = \frac{|u_1|+|u_2|}{|u_3|} \ge |u_1|+|u_2|\ge0.2.
\]
Now consider the set
\[
I_1 = \{x_1 + \alpha y_1: 0\le \alpha\le 1\}.
\]
Since $|y_1|=1$, $I_1$ is an interval of length $1$. By Lemma \ref{intlmm}, $I_1$ has a subinterval of $I_2$ of length $0.25$ such that any integer is at least at a distance $0.25$ from $I_2$. Moreover, since $|y_1|= 1$, $I_2$ is of the form
\[
\{x_1+\alpha y_1: a\le \alpha\le b\},
\]
where $b-a= 0.25$. Let
\[
I_3 := \{x_2+\alpha y_2: a\le \alpha\le b\}. 
\]
Since $|y_2|= 1$, $I_3$ has length $0.25$. Thus by Lemma \ref{intlmm}, $I_3$ contains a subinterval $I_4$ of length $0.0625$ such that any integer is at a distance at least $0.0625$ from $I_4$. Again, since $|y_2|= 1$, this implies that $I_4$ is of the form
\[
\{x_2+\alpha y_2: c\le \alpha\le d\},
\] 
where $a\le c\le d\le b$ and $d-c = 0.0625$. Let 
\[
I_5 := \{x_3+\alpha y_3: c\le \alpha\le d\}.
\]
Since $|y_3|\ge 0.2$, $I_5$ has length at least $0.0125$. Consequently by Lemma~\ref{intlmm}, $I_5$ has a subinterval $I_6$ of length $0.003125$ such that any integer is at a distance at least $0.003125$ from $I_6$.

In particular, there is some $\alpha\in [0,1]$ such that $x_1+\alpha y_1\in I_2$, $x_2+\alpha y_2\in I_4$ and $x_3+\alpha y_3\in I_6$. The distance of $x_i+\alpha y_i$ from the nearest integer is at least $0.003125$ for each $i$. Thus, the distance of the point $x+\alpha y$ from the boundary of the cube $D\in \mt$ that contains $x+\alpha y$ is at least $0.003125$. By Lemma \ref{geomlmm0} and the fact that $x+\alpha y\in P$, this proves $P$ bifurcates $D$ into two parts, each of which has volume at least $6\times 10^{-8}$. Lastly, note that 
\eq{
|(x+\alpha y)-x|&\le |y| = \sqrt{y_1^2+y_2^2+y_3^2}\\
&\le \sqrt{1+1 + \frac{(1+1)^2}{0.1^2}}\le \sqrt{402},
}
since $|u_1|\le 1$, $|u_2|\le 1$ and $|u_3|\ge 0.1$. 
This completes the proof of the lemma. 
\end{proof}
Now recall that the boundary of the set $U$ in the statement of Theorem~\ref{lowthm} is a smooth, closed, orientable surface. In particular, we can choose a unit normal vector $u(x)$ at each $x\in \partial U$ such that the map $x\mapsto u(x)$ is smooth.
\begin{lmm}\label{dcapu}
Take any $x\in \partial U$ such that the normal vector $u(x)$ satisfies~\eqref{u1u2u3}. Then there is some $j_0$ depending only on $U$ (but not on $x$), such that for all $j\ge j_0$, there is some $D\in \dm_j$ at distance at most $\sqrt{402} \cdot 2^{-j}$ from $x$, which satisfies
\eeq{
10^{-8}\le \frac{\leb(D\cap U)}{\leb(D)}\le 1- 10^{-8}. \label{djbd}
}
\end{lmm}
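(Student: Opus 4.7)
The plan is to rescale by a factor of $2^j$ so that dyadic cubes of side $2^{-j}$ become unit cubes in $\mt$, apply Lemma \ref{geomlmm} in the rescaled picture, and then use the smoothness of $\partial U$ to argue that the tangent plane at $x$ approximates $\partial U$ well enough inside the selected cube. Let $P$ denote the plane through $x$ perpendicular to $u(x)$. Since $u(x)$ satisfies \eqref{u1u2u3}, Lemma \ref{geomlmm} applied at the rescaled point $2^j x$ with normal $u(x)$ produces an element of $\mt$ within Euclidean distance $\sqrt{402}$ of $2^j x$ that is bifurcated by the plane through $2^j x$ perpendicular to $u(x)$ into two pieces, each of volume at least $6\times 10^{-8}$. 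Undoing the dilation, this yields a cube $D$ of side $2^{-j}$ with corners at integer multiples of $2^{-j}$, within distance $\sqrt{402}\cdot 2^{-j}$ of $x$, such that $P$ bifurcates $D$ into two parts each of volume at least $6\times 10^{-8}\cdot 8^{-j}$. Because $\partial U$ is compact and smooth, it sits at positive distance $\eta > 0$ from $\partial([0,1)^3)$; once $j$ is large enough that $(\sqrt{402}+\sqrt{3})\cdot 2^{-j} < \eta$, the cube $D$ is contained in $[0,1)^3$ and hence belongs to $\dm_j$.

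Next I would use $C^2$-smoothness of $\partial U$ together with its compactness to produce a uniform constant $C_1 = C_1(U)$ such that $|\langle y - x, u(x)\rangle| \le C_1 |y-x|^2$ whenever $x, y \in \partial U$ and $|y-x|$ is below a $U$-dependent threshold; this is the standard quadratic approximation of a $C^2$ hypersurface by its tangent plane, and the uniformity in $x$ comes from the fact that $\partial U$ is a compact $C^2$ surface. For $j$ large enough, every $y \in \partial U \cap D$ is within distance $(\sqrt{402}+\sqrt{3})\cdot 2^{-j}$ of $x$, so $\partial U \cap D$ is contained in a slab $S$ of thickness at most $C_2 \cdot 4^{-j}$ around $P$, with $C_2 = C_2(U)$. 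Since the projection of $D$ onto $P$ has area at most $\sqrt{3} \cdot 4^{-j}$, this gives $\leb(S \cap D) \le C_3 \cdot 16^{-j}$ for some $C_3 = C_3(U)$.

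Finally, take $u(x)$ to be the outward unit normal and set $H := \{y : \langle y - x, u(x)\rangle < 0\}$. Outside $S$, no point of $\partial U$ separates $D\cap H$ from $D\cap U$, so these two sets agree modulo $S\cap D$. Therefore
\[
\leb(D\cap U) \ge \leb(D\cap H) - \leb(S\cap D) \ge 6\times 10^{-8}\cdot 8^{-j} - C_3\cdot 16^{-j},
\]
and the symmetric bound holds for $\leb(D\setminus U)$. Picking $j_0$ large enough that simultaneously $(\sqrt{402}+\sqrt{3})\cdot 2^{-j_0} < \eta$ and $C_3\cdot 2^{-j_0} \le 5\times 10^{-8}$, both $\leb(D\cap U)$ and $\leb(D\setminus U)$ exceed $10^{-8}\cdot 8^{-j} = 10^{-8}\leb(D)$ for every $j \ge j_0$, which is \eqref{djbd}. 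The main obstacle is verifying that the constants $C_1, C_3, \eta$ can be taken uniformly in $x \in \partial U$; this relies crucially on compactness of $\partial U$ together with smoothness of the Gauss map $x \mapsto u(x)$.
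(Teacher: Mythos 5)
Your proof is correct and follows essentially the same route as the paper's: rescale so that Lemma \ref{geomlmm} produces a dyadic cube near $x$ bifurcated by the tangent plane into two pieces of volume $\ge 6\times 10^{-8}\leb(D)$, then use the uniform curvature bound to trap $\partial U\cap D$ in a slab of thickness $O(4^{-j})$ around the tangent plane, whose volume $O(16^{-j})$ is negligible against $8^{-j}$. The only point stated more confidently than it deserves is that $\partial U$ lies at positive distance from $\partial([0,1)^3)$ (a smooth $\partial U$ could be internally tangent to a face of the cube), but the paper's own one-line proof glosses over the same issue, and it does not affect the argument where the lemma is actually used.
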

\begin{proof}
From the given properties of $\partial U$, it is clear that $\partial U$ has uniformly bounded curvature. Consequently, there is a constant $C$ depending only on $U$, such that for any $x\in \partial U$  and any $\ep\in (0,1)$, $B(x,\ep)\cap \partial U$ lies inside a slab of width $C\ep^2$ around $T_x$, where $B(x,\ep)$ is the Euclidean ball of radius $\ep$ around $x$, and $T_x$ is the tangent plane at $x$. The rest of the proof is an easy application of Lemma~\ref{geomlmm} and scaling. 
\end{proof}
The above lemma leads to the following result, which is a key component of the proof of Theorem \ref{lowthm}.
\begin{lmm}\label{k1lmm}
There is some $K_1>0$ and some $j_1\ge 1$ depending only on $U$ such that for any $j\ge j_1$, there is a set of at least $K_1 4^j$ cubes $D\in \dm_j$ that satisfy~\eqref{djbd} and the union of these cubes has diameter at most $\diam(U)/3$.
\end{lmm}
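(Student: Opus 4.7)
The plan is to produce a small patch of $\partial U$ on which the outward normal stays in the ``generic'' regime \eqref{u1u2u3}, apply Lemma \ref{dcapu} to every point of this patch, and then count the resulting dyadic cubes by an area argument.

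First I would show that we can find a patch $\Gamma\subseteq \partial U$ on which the Gauss map $u$ stays uniformly in the set where \eqref{u1u2u3} holds. Since $\partial U$ is a closed smooth orientable surface bounding a bounded open set, the Gauss map $u:\partial U\to S^2$ is surjective: for any $v\in S^2$, the linear functional $x\mapsto \langle x,v\rangle$ attains its maximum on $\overline{U}$ at some point of $\partial U$, and at that point the (outward) unit normal equals $v$. Pick $x_0\in\partial U$ with $u(x_0)=(1/\sqrt3,1/\sqrt3,1/\sqrt3)$. By continuity of $u$, there is an open neighbourhood $\Gamma\subseteq\partial U$ of $x_0$ on which every component of $u$ has absolute value at least $0.2$. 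By shrinking $\Gamma$ if necessary, I may also assume that $\diam(\Gamma)\le \diam(U)/6$, and $\Gamma$ still has positive $2$-dimensional surface area, call it $A_0>0$.

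Next, fix $j\ge j_0$, where $j_0$ is the constant from Lemma \ref{dcapu}. For each $x\in \Gamma$, Lemma \ref{dcapu} provides a cube $D(x)\in \dm_j$ satisfying \eqref{djbd} and at Euclidean distance at most $\sqrt{402}\,2^{-j}$ from $x$. Let $\mathcal{G}_j$ be the collection of all cubes obtained this way (listed without repetition). Every cube $D\in\mathcal{G}_j$ is contained in the $(\sqrt{402}+\sqrt 3)\,2^{-j}$-neighbourhood of $\Gamma$, so the union $\bigcup_{D\in\mathcal{G}_j}D$ has diameter at most
\begin{equation*}
\diam(\Gamma)+2(\sqrt{402}+\sqrt 3)\,2^{-j} \;\le\; \frac{\diam(U)}{6}+2(\sqrt{402}+\sqrt 3)\,2^{-j},
\end{equation*}
which is at most $\diam(U)/3$ once $j$ is larger than some $j_1\ge j_0$ depending only on $U$.

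For the lower bound on $|\mathcal{G}_j|$ I would use a standard area-covering argument. Since $\partial U$ is a smooth compact surface, it has uniformly bounded second fundamental form, hence there is a constant $C_U$ such that for any ball $B(c,r)$ with $r\le 1$,
\begin{equation*}
\area\bigl(B(c,r)\cap \partial U\bigr)\;\le\; C_U\,r^{2}.
\end{equation*}
Every $x\in\Gamma$ that gets assigned to a given $D\in\mathcal{G}_j$ must lie within distance $r_j:=(\sqrt{402}+\sqrt 3)\,2^{-j}$ of the centre of $D$, so at most $C_U r_j^{2}$ of the area of $\Gamma$ is ``served'' by any single $D$. Covering all of $\Gamma$ therefore requires at least
\begin{equation*}
|\mathcal{G}_j|\;\ge\;\frac{A_0}{C_U r_j^{2}}\;=\;\frac{A_0}{C_U(\sqrt{402}+\sqrt 3)^{2}}\,4^{j},
\end{equation*}
which gives the claim with $K_1 := A_0/\bigl(C_U(\sqrt{402}+\sqrt 3)^{2}\bigr)$.

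The main (minor) obstacle is verifying the two geometric facts used above, namely the surjectivity of the Gauss map and the area bound $\area(B(c,r)\cap\partial U)\le C_U r^{2}$; both are consequences of smoothness, compactness, and the closed orientable structure of $\partial U$, so I would either cite them or dispatch them in a line. Everything else is a direct combination of Lemma \ref{dcapu} with the area-counting and diameter bookkeeping above.
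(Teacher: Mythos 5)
Your proof is correct and follows essentially the same route as the paper: both locate a boundary point whose normal is parallel to $(1,1,1)$ by extremizing in that direction, pass to a small patch of $\partial U$ where \eqref{u1u2u3} persists, and then invoke Lemma \ref{dcapu} to harvest $\asymp 4^j$ distinct cubes while keeping the diameter under control. The only cosmetic difference is in the counting step: the paper packs $\asymp \delta^{-2}$ points of the patch that are $50\delta$-separated so the associated cubes are automatically distinct, whereas you run the dual covering argument (each cube can serve only $O(4^{-j})$ of the patch's area); both are valid.
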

\begin{proof}
Let $P$ be the plane through the origin that is perpendicular to the vector $(1,1,1)$. Let $\alpha_0$ be the largest $\alpha$ such that the plane $P_\alpha := (\alpha,\alpha,\alpha)+P$ intersects the closure of $U$. Let $x$ be a point of intersection. Then $x\in \partial U$, and $P_{\alpha_0}=T_x$. Consequently, there is some $0<\ep<\diam(U)/7$  such that for every $y\in B(x,\ep)\cap \partial U$, $u(y)$ satisfies \eqref{u1u2u3}. Due to the boundedness of the curvature of $\partial U$, a small enough choice of $\ep$ guarantees that for any $\delta\in (0,1)$, there are at least $C\delta^{-2}$ points in $B(x,\ep)\cap \partial U$, where $C$ is a positive constant that depends only on $U$, such that any two points are at distance at least $50\delta$ from each other. 

Take  $\delta=2^{-j}$, and choose a collection of points as above. Then by Lemma~\ref{dcapu}, there is an element of $\dm_j$ within distance $21\delta$ from each point, that satisfies \eqref{djbd}. Since the points are separated by distance at least $50\delta$ from each other, these elements of $\dm_j$ are distinct. Since $\ep<\diam(U)/7$, a large enough choice of $j$ ensures that the union of these cubes has diameter less than $\diam(U)/3$. 
\end{proof}
Lastly, we need  a lemma about our point process. Recall that for any $D\in \dm$, $N(D)$ is the number of points landing in $D$.
\begin{lmm}\label{repulsion}
For any $n\ge 1$, $\beta>0$, $j\ge 0$ and $D\in \dm_j$,
\[
\pp(N(D)\ge 2)\le \exp\biggl(-2^{j+1}\beta +\frac{7\beta}{3}{n\choose 2}\biggr).
\]
\end{lmm}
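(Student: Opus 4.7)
The idea is to control the ratio that defines $\pp(N(D)\ge 2)$ by bounding the numerator and denominator separately, using the hierarchical structure of $w$ to lower-bound the energy on the event $\{N(D)\ge 2\}$.

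First, I would bound the partition function from below. By Jensen's inequality applied to the uniform measure on $\Sigma_n=([0,1)^3)^n$ (which has total mass one),
\[
Z(n,\beta) = \int_{\Sigma_n} e^{-\beta H_n(x_1,\ldots,x_n)}\,dx_1\cdots dx_n \ge \exp\biggl(-\beta \int_{\Sigma_n} H_n(x_1,\ldots,x_n)\,dx_1\cdots dx_n\biggr).
\]
By Lemma \ref{wlmm} and exchangeability, $\int H_n\,dx = \binom{n}{2}\cdot \tfrac{7}{3}$, which yields $Z(n,\beta) \ge \exp(-\tfrac{7\beta}{3}\binom{n}{2})$.

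Next I would bound the numerator. If $D\in\dm_j$ and $x,y\in D$ are distinct, then $x$ and $y$ lie in the same member of $\dm_j$, so by the definition of $k(x,y)$ we have $k(x,y)\ge j+1$, hence $w(x,y)\ge 2^{j+1}$. On the event $\{N(D)\ge 2\}$, at least one pair of points from the configuration sits in $D$, contributing at least $2^{j+1}$ to the energy; the remaining pairs contribute a nonnegative amount since $w\ge 0$. Therefore $H_n\ge 2^{j+1}$ pointwise on $\{N(D)\ge 2\}$, and since the Lebesgue measure of $\Sigma_n$ is one,
\[
\int_{\{N(D)\ge 2\}} e^{-\beta H_n}\,dx_1\cdots dx_n \le e^{-2^{j+1}\beta}.
\]

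Combining the two estimates gives
\[
\pp(N(D)\ge 2) = \frac{1}{Z(n,\beta)}\int_{\{N(D)\ge 2\}} e^{-\beta H_n}\,dx_1\cdots dx_n \le \exp\biggl(-2^{j+1}\beta + \frac{7\beta}{3}\binom{n}{2}\biggr),
\]
which is the claim. There is no real obstacle here; the only subtle point is noting that the "repulsion" at scale $2^{-j}$ automatically produces an energy cost of $2^{j+1}$ as soon as two particles share a dyadic cube of side $2^{-j}$, while the crude Jensen lower bound on $Z(n,\beta)$ is enough because the excess factor $\exp(\tfrac{7\beta}{3}\binom{n}{2})$ is already built into the statement.
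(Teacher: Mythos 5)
Your proof is correct and is essentially identical to the paper's: both use Jensen's inequality with Lemma \ref{wlmm} to get $Z(n,\beta)\ge \exp(-\tfrac{7\beta}{3}\binom{n}{2})$, and both observe that two points sharing a cube of $\dm_j$ force $H_n\ge 2^{j+1}$, so the restricted integral is at most $e^{-2^{j+1}\beta}$. No differences worth noting.
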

\begin{proof}
The $n=1$ case is trivial, so let us take $n\ge 2$. By Jensen's inequality and Lemma \ref{wlmm},
\[
Z(n,\beta) \ge \exp\biggl(-\frac{7\beta }{3}{n\choose 2}\biggr). 
\]
On the other hand, if a configuration $x_1,\ldots, x_n$ has two or more points in $D$, then 
\[
H_n(x_1,\ldots,x_n)\ge 2^{j+1}.
\]
Thus, if $A$ is the set of all such configurations, then
\[
\int_A e^{-\beta H_n(x_1,\ldots,x_n)}\, dx_1\cdots dx_n \le e^{- 2^{j+1}\beta }\leb(A)\le e^{-2^{j+1}\beta}. 
\]
Combining, we get
\eq{
\pp(N(D)\ge 2) = \mu_{n,\beta}(A) \le \exp\biggl(-2^{j+1}\beta +\frac{7\beta}{3}{n\choose 2}\biggr),
}
which completes the proof.
\end{proof}
Finally, we are ready to prove Theorem \ref{lowthm}. Recall the filtration $\{\mf_k\}_{k\ge 0}$ defined earlier.
\begin{proof}[Proof of Theorem \ref{lowthm}]
In this proof, the phrase `$n$ sufficiently large' will mean `$n\ge n_0$, where $n_0$ depends only on $U$ and $\beta$'. Also, $C$ will denote any positive universal constant, $C(\beta)$ will denote any positive constant that depends only on $\beta$, and $C(U,\beta)$ will denote any positive constant that depends only on $U$ and~$\beta$.

Choose $k$ such that 
\eeq{
n^{-1/3}\le 2^{-k} \le 2n^{-1/3}. \label{kdefeq}
}
Then for any $D\in \dm_k$, Lemma \ref{expvarlmm} gives
\eeq{
\ee(N(D)^2) \le K_2(\beta),\label{nd2}
}
where $K_2(\beta)$ is a positive integer that depends only on $\beta$. Let 
\[
m := 1000 K_2(\beta).
\]
Let $j>k$ be the smallest number such that 
\[
2^{j-k+1} \ge \frac{7}{3}{m\choose 2}+1. 
\]
Note that $0\le j-k\le C(\beta)$.

Take any $D\in \dm_k$. Let $\dm_j(D)$ denote the set of elements of $\dm_j$ that are descendants of $D$. Take any $D'\in \dm_j(D)$. If $N(D)\le m$, then by Lemma~\ref{repulsion} and Lemma~\ref{condlmm},
\[
\pp(N(D')\ge 2|\mf_k) \le e^{-2^k\beta} \le e^{-\beta n^{1/3}}. 
\]
Consequently,
\eq{
\pp(N(D)\le m, \, N(D')\ge 2) &= \ee(\pp(N(D')\ge 2|\mf_k); N(D)\le m)\\
&\le e^{-\beta n^{1/3}} \pp(N(D)\le m)\le e^{-\beta n^{1/3}}. 
}
In particular, if $E$ is the event
\eq{
&\{N(D)\le m \text{ and } N(D')\ge 2 \text{ for some } D\in \dm_k  \text{ and some } D'\in \dm_j(D)\},
}
then a union bound gives
\eeq{
\pp(E)&\le\sum_{D\in \dm_k}\sum_{D'\in \dm_j(D)}\pp(N(D)\le m, \, N(D')\ge 2)\\
&\le |\dm_j| e^{-\beta n^{1/3}} \le  C(\beta) ne^{-\beta n^{1/3}}.\label{ndnd}
}
We will need this inequality later.

Now, if $n$ is sufficiently large, then there is a set $\mc'\subseteq \dm_j$ that satisfies the conclusions of Lemma \ref{k1lmm}. In particular, $|\mc'|\ge C(U,\beta) 4^j$. Moreover, since each element of $\mc'$ satisfies \eqref{djbd}, these cubes must lie entirely within distance $\sqrt{3}\cdot 2^{-j}$ from $\partial U$. If $n$ is large enough, then $\sqrt{3}\cdot 2^{-j}\le \diam(U)$. Therefore by the regularity of $\partial U$, we have $|\mc'|\le C(U,\beta) 4^j$.

Let $\mc$ denote the set of all members of $\dm_k$ who are ancestors of elements of $\mc'$. By dropping some elements from $\mc'$ if necessary, we can ensure that each member of $\mc$ has exactly one descendant in $\mc'$. Since $0\le j-k\le C(\beta)$, this gives the inequalities 
\eeq{
C_1(U,\beta) 4^k\le |\mc|=|\mc'|\le C_2(U,\beta)4^k,\label{mcsize}
}
where $C_1(U,\beta)$ and $C_2(U,\beta)$ are positive constants that depend only on $U$ and $\beta$. 
Let $Q$ be the union of the elements of $\mc$. Recall that by Lemma \ref{k1lmm} and the relation between $\mc$ and $\mc'$, 
\[
\diam(Q)\le \frac{\diam(U)}{3}+2\sqrt{3}\cdot 2^{-k},
\] 
which is less than $\diam(U)/2$ if $n$ is sufficiently large. 
Thus, if $n$ is large enough and  $\sqrt{3}\cdot 2^{-k} \le \ep\le \diam(Q)$, then 
\[
\ep+\sqrt{3}\cdot 2^{-k}\le 2\ep\le 2\,\diam(Q)\le \diam(U).
\]
Moreover, each point in $Q$ is at distance at most $\sqrt{3}\cdot 2^{-k}$ from $U$. Therefore, 
\eq{
\leb(\partial Q_\ep) &\le \leb(\partial U_{\ep+\sqrt{3}\cdot 2^{-k}})\le A(U)(\ep+ \sqrt{3}\cdot 2^{-k} ) \le 2A(U)\ep.
}
On the other hand, if $0<\ep\le \sqrt{3}\cdot 2^{-k}$, then 
\eq{
\leb(\partial Q_\ep) &\le \sum_{D\in \mc} \leb(\partial D_\ep)\le \sum_{D\in \mc} A(D)\ep\le C\sum_{D\in \mc} 4^{-k}\ep= C |\mc|4^{-k}\ep.
}
Therefore, by \eqref{mcsize}, for $0<\ep\le \sqrt{3}\cdot 2^{-k}$,
\[
\leb(\partial Q_\ep)\le C(U,\beta)\ep.
\]
Combining the two cases, we get $A(Q)\le C(U,\beta)$.  Consequently, by Theorem \ref{mainthm}, 
\eeq{
\var(N(Q)) &\le C(U,\beta) n^{2/3}\log n, \label{varnq}
}
provided that $n$ is sufficiently large. 
Also, by Lemma \ref{expvarlmm} and our choice of~$k$,
\eq{
\ee(N(Q)) &= \leb(Q) n = |\mc| 8^{-k} n\ge |\mc|.
}
Thus, by \eqref{mcsize}, \eqref{varnq} and Chebychev's inequality,
\eeq{
\pp\biggl(\frac{N(Q)}{|\mc|}\ge \frac{1}{2}\biggr) &\ge 1-\frac{4\var(N(Q))}{|\mc|^2}\ge1-C(U,\beta) n^{-2/3}\log n.\label{a1ineq}
}
Now let 
\eq{
&a_1 :=\frac{1}{|\mc|}\sum_{D\in \mc} N(D) = \frac{N(Q)}{|\mc|}, \ \ \ a_2 := \frac{1}{|\mc|} \sum_{D\in \mc}N(D)^2,\\
&p_1 := \frac{|\{D\in \mc: N(D)>0\}|}{|\mc|},\ \  \ p_2 := \frac{|\{D\in \mc: N(D)> m\}|}{|\mc|},\\
&q := \frac{|\{D\in \mc: 0<N(D)\le m\}|}{|\mc|}.
}
By \eqref{nd2}, $\ee(a_2)\le K_2(\beta)$. Thus, 
\eeq{
\pp(a_2\ge 2K_2(\beta))\le \frac{1}{2}. \label{a2ineq}
}
By the Paley--Zygmund second moment inequality,
\eq{
p_1\ge \frac{a_1^2}{a_2}, 
}
and so by \eqref{a1ineq} and \eqref{a2ineq},
\eq{
\pp\biggl(p_1 \ge \frac{1}{8K_2(\beta)}\biggr) &\ge \pp\biggl(a_1\ge \frac{1}{2}, \, a_2 \le 2K_2(\beta)\biggr)\ge \frac{1}{2}-C(U,\beta) n^{-2/3}\log n. 
}
Choose $n$ so large that the above lower bound at least $1/3$. Next, note that by Lemma \ref{expvarlmm} and Markov's inequality,
\eq{ 
\ee(p_2) &\le \frac{1}{m|\mc|} \sum_{D\in \mc}\ee(N(D))\le \frac{8}{m},
}
and hence
\eq{
\pp\biggl(p_2\ge \frac{32}{m}\biggr)\le \frac{1}{4}.
}
Since $q=p_1-p_2$ and 
\[
\frac{1}{8K_2(\beta)} \ge \frac{64}{m},
\]
this gives  
\eeq{
\pp\biggl(q\ge \frac{32}{m}\biggr) &\ge \pp\biggl(p_1\ge \frac{64}{m},\, p_2\le \frac{32}{m}\biggr)\ge \frac{1}{3}-\frac{1}{4}= \frac{1}{12}.\label{qeq}
}
Let $\mc_0$ be the set of all $D\in \mc$ such that $0<N(D)\le m$. Let $\mc'_0$ be the set of all elements of $\mc'$ that are contained in elements of $\mc_0$. Let
\eeq{
r &:= \frac{1}{|\mc'_0|} \sum_{D\in \mc'_0}N(D) \label{rdefeq}
}
if $\mc'_0\ne \emptyset$ and let $r=0$ otherwise. 
By Lemma \ref{condcalc}, if $\mc'_0$ is nonempty, 
\eeq{
\ee(r|\mf_k) &= \frac{1}{8^{j-k}|\mc_0|} \sum_{D\in \mc_0}N(D) \ge C(\beta),\label{rexpeq}
}
and by Lemma~\ref{condcalc} and Lemma \ref{condlmm},
\eeq{
\var(r|\mf_k) &\le \frac{C(\beta)}{|\mc'_0|} = \frac{C(\beta)}{|\mc|q}\le \frac{C(\beta)}{n^{2/3}q}. \label{rvareq}
}
By the last two inequalities and Chebychev's inequality, we see that there is a positive constant $K_3(\beta)$ depending only on $\beta$ such that if $q\ge 32/m$ and $n$ is sufficiently large, then 
\[
\pp(r\ge K_3(\beta)|\mf_k) \ge 1-C(U,\beta)n^{-2/3}.
\]
Therefore by \eqref{qeq}, if $n$ is sufficiently large,
\eeq{
\pp\biggl(r\ge K_3(\beta), \, q\ge \frac{32}{m}\biggr)\ge \frac{1}{13}.\label{rqeq}
}
Thus, for sufficiently large $n$,
\eq{
\pp(|\mc_0'| \ge K_4(\beta) n^{2/3}) \ge \frac{1}{13},
}
where $K_4(\beta)$ is a positive constant that depends only on $\beta$.

Now recall the event $E$ defined earlier. Let $E^c$ denote the complement of~$E$. If $E^c$ happens, then $\mc_0'= \mc^*$, where 
\eeq{
\mc^* := \{D\in \mc_0': N(D)=1\}.\label{mcstar}
}
Combining this with \eqref{ndnd}, this shows that for sufficiently large $n$,
\eeq{
\pp(|\mc^*| \ge K_4(\beta)n^{2/3})&\ge \pp(\{|\mc_0'| \ge K_4(\beta) n^{2/3}\}\cap E^c)\\
&\ge\pp(|\mc_0'| \ge K_4(\beta) n^{2/3})-\pp(E)\ge \frac{1}{14}.\label{cstar}
}
By Lemma \ref{condlmm}, the random variables $\{N(D\cap U): D\in \dm_j\}$ are independent given $\mf_j$. If $N(D)=1$, then the conditional distribution of $N(D\cap U)$ given $\mf_j$ is Bernoulli$(p(D))$, where $p(D)=\leb(D\cap U)/\leb(D)$. Let
\[
M := \sum_{D\in \mc^*} N(D\cap U).
\]
Since $10^{-8} \le p(D)\le 1-10^{-8}$ for each $D\in \mc^*$, the Berry--Esseen theorem for sums of independent random variables shows that for any interval $I$,
\eeq{
\pp(M\in I|\mf_j)\le \frac{C(|I|+1)}{\sqrt{|\mc^*|}}, \label{berry}
}
where $|I|$ denotes the length of $I$. Since 
\[
N(U) = \sum_{D\in \dm_j} N(D\cap U) = \sum_{D\in \dm_j\setminus \mc^*}N(D\cap U) + M,
\]
and the two terms in the last expression are independent given $\mf_j$, the inequality \eqref{berry} implies that
\eq{ 
\pp(N(U)\in I|\mf_j)\le \frac{C(|I|+1)}{\sqrt{|\mc^*|}}.
}
Therefore by \eqref{cstar},
\eq{
\pp(N(U)\in I)\le C(\beta)(|I|+1)n^{-1/3} + \frac{13}{14}
}
if $n$ is sufficiently large. This completes the proof.
\end{proof}

Finally, let us prove Theorem \ref{linlowthm}. The ingredients are almost all drawn from the proof of Theorem \ref{lowthm}.

\begin{proof}[Proof of Theorem \ref{linlowthm}]
In this proof, $C(\beta)$ denotes any positive constant that depends only on $\beta$, $C(f)$ denotes any positive constant that depends only on $f$ and $C(f,\beta)$ denotes any positive constant that depends only on $f$ and $\beta$. Let $j$ and $k$ be defined as in \eqref{kdefeq}. Let $f:[0,1]^3\to\rr$ be a non-constant linear function. 

Let $\mc := \dm_k$, and let $a_1$, $a_2$, $p_1$, $p_2$ and $q$ be defined as in the proof of Theorem \ref{lowthm}, with this $\mc$. Then $|\mc|=8^k$, and $a_1 = 8^{-k}n\ge 1$. The inequality \eqref{a2ineq} is still valid, and hence we get
\eq{
\pp\biggl(p_1\ge \frac{1}{8K_2(\beta)} \biggr) \ge \frac{1}{2}. 
}
Proceeding then as in the proof of Theorem \ref{lowthm}, this gives
\eq{
\pp\biggl(q \ge \frac{32}{m}\biggr)\ge \frac{1}{4}. 
}
Let $\mc_0$ be the set of all $D\in \mc$ for which $0<N(D)\le m$. Construct a set $\mc_0'\subseteq \dm_j$ by choosing exactly one descendant of each element of $\mc_0$ by some arbitrary deterministic rule. Let $r$ be defined as in \eqref{rdefeq}. Then~\eqref{ndnd}, \eqref{rexpeq} and~\eqref{rvareq} continue to hold, and therefore so does~\eqref{rqeq} when $n$ is sufficiently large. Since $|\mc|\ge n$ in this proof, this shows that for sufficiently large $n$,
\eeq{
\pp(|\mc^*|\ge K_5(\beta)n) \ge \frac{1}{14},\label{cstar2}
} 
where $\mc^*$ is defined as in \eqref{mcstar} and $K_5(\beta)$ is a positive constant that depends only on $\beta$.

For each $D\in \dm_j$, let
\[
X(f,D) := \sum_{i\, : \, X_i\in D} f(X_i). 
\]
By Lemma \ref{condlmm}, the random variables $\{X(f,D): D\in \dm_j\}$ are conditionally independent given $\mf_j$.  Let
\[
M := n^{1/3}\sum_{D\in \mc^*} X(f, D).
\]
Now take any $D\in \mc^*$. Recall that $D$ contains exactly one point of our point process, and by Lemma \ref{condlmm}, the conditional distribution of this point given $\mf_j$ is uniform over the cube $D$. Since $f$ is a linear function, it is easy to see from this observation that for any $D\in \mc^*$, the conditional distribution of the random variable 
\[
n^{1/3}(X(f,D) - \ee(X(f,D)))
\]
given $\mf_j$ is actually non-random, and depends only on $f$. In particular, since $f$ is also non-constant, this shows that
\eq{
\var(n^{1/3}X(f,D)|\mf_j) = K_6(f)
}
and
\eq{
\ee\bigl(|n^{1/3}X(f,D) - \ee(n^{1/3}X(f,D))|^3\bigl|\mf_j\bigr)= K_7(f),
}
where $K_6(f)$ and $K_7(f)$ are strictly positive constants that depend only on~$f$.  Therefore by the Berry--Esseen theorem, for any interval $I$,
\eeq{
\pp(M\in I|\mf_j)\le \frac{C(f)(|I|+1)}{\sqrt{|\mc^*|}}, \label{berry2}
}
where $|I|$ denotes the length of $I$. Since 
\[
n^{1/3}X(f) = n^{1/3}\sum_{D\in \dm_j} X(f,D) = n^{1/3}\sum_{D\in \dm_j\setminus \mc^*}X(f,D) + M,
\]
and the two terms in the last expression are independent given $\mf_j$, the inequality \eqref{berry2} implies that
\eq{ 
\pp(n^{1/3}X(f)\in I|\mf_j)\le \frac{C(f)(|I|+1)}{\sqrt{|\mc^*|}}.
}
Therefore by \eqref{cstar2},
\eq{
\pp(n^{1/3}X(f)\in I)\le \frac{C(f,\beta)(|I|+1)}{\sqrt{n}} + \frac{13}{14}
}
if $n$ is sufficiently large. This completes the proof.
\end{proof}

\section{Proofs in 2D and 1D}
In this section, we will prove the results of Section \ref{results2d}. The proofs are similar to the proofs in the 3D case, but there are substantial differences, which is why we need a separate section. 
\subsection{Notation}
All notation will remain the same as in the 3D case. For example, $\dm_k$ will denote dyadic sub-squares of side-length $2^{-k}$ in 2D, and dyadic sub-intervals of length $2^{-k}$ in 1D. The main change is that $w$ is now different, namely, $w(x,y)=k(x,y)$, where $k(x,y)$ is the smallest $k$ such that $x$ and $y$ belong to distinct elements of $\dm_k$. The partition function $Z(n,\beta)$ and the measure $\mu_{n,\beta}$ are defined as before, with this new $w$ instead of the old one. We will denote the dimension by $d$, which may be $1$ or $2$.  

\subsection{Preliminary calculations}
First, let us carry out the calculations analogous to those done in Section \ref{prelimsec}.
\begin{lmm}\label{wlmm2d}
For each $x\in [0,1)^d$,
\[
\int w(x,y) \, dy = \frac{2^d}{2^d-1}.
\]
Consequently,
\[
\iint w(x,y) \, dx\, dy = \frac{2^d}{2^d-1}.
\]
\end{lmm}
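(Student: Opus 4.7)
The plan is to follow the same strategy as in Lemma \ref{wlmm} (the 3D version), adapting the geometric count of level sets and the geometric series to the new definition of $w$ (which is $k$ rather than $2^k$) and to the fact that a dyadic cube in $\rr^d$ has $2^d$ children.

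First, fix $x \in [0,1)^d$ and, for each $k \ge 0$, let $D_k$ denote the unique element of $\dm_k$ containing $x$, so $D_0 = [0,1)^d$ and $D_k \subset D_{k-1}$. By the definition of $w$, the set $\{y : w(x,y) = k\}$ is exactly the union of the $2^d - 1$ children of $D_{k-1}$ other than $D_k$ (modulo a measure-zero boundary set), and hence has Lebesgue measure $(2^d - 1)\cdot 2^{-dk}$.

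Next I would sum over $k$:
\[
\int w(x,y)\,dy = \sum_{k=1}^\infty k\,(2^d - 1)\,2^{-dk} = (2^d - 1)\sum_{k=1}^\infty k\,(2^{-d})^k.
\]
Using the standard identity $\sum_{k=1}^\infty k r^k = r/(1-r)^2$ with $r = 2^{-d}$, and noting that $1 - 2^{-d} = (2^d - 1)/2^d$, this becomes
\[
(2^d - 1)\cdot\frac{2^{-d}}{(1 - 2^{-d})^2} = (2^d - 1)\cdot\frac{2^{-d}\cdot 2^{2d}}{(2^d - 1)^2} = \frac{2^d}{2^d - 1},
\]
as desired. The second assertion follows immediately by integrating the first over $x \in [0,1)^d$.

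There is no real obstacle here; the only thing to be careful about is the measure-zero boundary of the dyadic cubes, which does not affect the Lebesgue integral, exactly as in the proof of Lemma \ref{wlmm}. The computation works uniformly for $d = 1$ (giving $2$) and $d = 2$ (giving $4/3$), matching the stated formula.
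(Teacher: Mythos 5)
Your proof is correct and follows exactly the same route as the paper's: identify the level set $\{y: w(x,y)=k\}$ as the $2^d-1$ siblings of $D_k$ inside $D_{k-1}$, compute its measure as $(2^d-1)2^{-dk}$, and sum the series $\sum_{k\ge 1} k\,2^{-dk}$. The only difference is that you write out the geometric-series identity explicitly, which the paper leaves to the reader.
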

\begin{proof}
Take any $x$. For each $k$, let $D_k$ be the element of $\dm_k$ that contains $x$. It is easy to see that the set of all $y$ with $w(x,y)=k$ is exactly the union of all members of $\dm_k$ that are contained in $D_{k-1}$, except the one that contains~$x$. The Lebesgue measure of this set is $2^{-dk} (2^d-1)$. Thus,
\[
\int w(x,y)\, dy = (2^d-1)\sum_{k=1}^\infty k2^{-dk} = \frac{2^d}{2^d-1}.
\]
The second assertion is obvious from the first.
\end{proof}
Let us now investigate energy-minimizing configurations of finite size. 
As before, $L_n$ will denote the minimum possible energy of a configuration of $n$ points. The following result gives upper and lower bounds for $L_n$ in dimensions one and two. 
\begin{thm}\label{lnlowthm2d}
There is a positive constant $C_1$ such that for each $n\ge 2$,
\[
{n\choose 2}\frac{2^d}{2^d-1} - C_1n\log n\le L_n \le {n\choose 2}\frac{2^d}{2^d-1}.
\]
\end{thm}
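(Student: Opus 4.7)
The upper bound follows exactly as in Theorem \ref{lnlowthm}. Taking $Y_1, \ldots, Y_n$ i.i.d.\ uniform on $[0,1)^d$, by symmetry and Lemma \ref{wlmm2d},
$$L_n \;\le\; \ee(H_n(Y_1,\ldots,Y_n)) \;=\; \binom{n}{2}\,\ee(w(Y_1,Y_2)) \;=\; \binom{n}{2}\cdot\frac{2^d}{2^d-1}.$$

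For the lower bound, the plan is to first replace the energy decomposition used in Theorem \ref{lnlowthm} by its analog for the new potential $w(x,y)=k(x,y)$, namely
$$H_n(x_1,\ldots,x_n) \;=\; \sum_{j=0}^{\infty}\sum_{D\in\dm_j}\binom{n_D}{2}, \qquad n_D := |\{i:x_i\in D\}|.$$
This identity follows by observing that, for distinct $x_i,x_\ell$, $k(x_i,x_\ell)$ is exactly the number of levels $j\ge 0$ at which $x_i$ and $x_\ell$ belong to a common element of $\dm_j$; swapping the order of summation then gives the formula. The contrast with the 3D case is that every dyadic level now receives weight $1$ rather than $2^j$.

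Next, I would choose $k$ with $n^{-1/d}\le 2^{-k}\le 2n^{-1/d}$ and truncate at level $k$. Applying Cauchy--Schwarz to each level exactly as in the 3D proof gives $\sum_{D\in\dm_j}n_D^2 \ge n^2/|\dm_j| = n^2 2^{-dj}$, so
$$H_n \;\ge\; \sum_{j=0}^{k}\biggl(\frac{n^2}{2\cdot 2^{dj}}-\frac{n}{2}\biggr) \;=\; \frac{n^2}{2}\cdot\frac{1-2^{-d(k+1)}}{1-2^{-d}} \;-\; \frac{n(k+1)}{2}.$$
The choice of $k$ gives $2^{-d(k+1)}\le 1/n$, which together with the identity $\binom{n}{2}=\tfrac{1}{2}(n^2-n)$ shows that the first term is at least $\binom{n}{2}\cdot 2^d/(2^d-1)$. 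The second term is of order $n\log n$ since $k\le C_d\log n$, giving the required lower bound.

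The only substantive change from Theorem \ref{lnlowthm} is that, because the hierarchical logarithmic-type potential weights each dyadic level equally, the per-level correction $-n/2$ accumulates additively over the $\Theta(\log n)$ active levels, producing the $n\log n$ error, in contrast to the $n^{4/3}$ correction in 3D where the weights $2^j$ forced the correction to be a geometric sum dominated by its top term. I anticipate no genuine obstacle beyond this bookkeeping; the proof is otherwise a direct adaptation of the 3D argument.
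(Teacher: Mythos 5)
Your proposal is correct and follows essentially the same route as the paper: the identical upper bound via i.i.d.\ uniform points, the level-by-level decomposition $H_n=\sum_j\sum_{D\in\dm_j}\binom{n_D}{2}$ (the paper merely writes the $j=0$ term as a separate $\binom{n}{2}$), truncation at the scale $2^{-k}\asymp n^{-1/d}$, and Cauchy--Schwarz at each level, with the $\Theta(\log n)$ active levels each contributing an additive $-n/2$ correction. Your closing algebra showing the geometric sum already dominates $\binom{n}{2}\cdot 2^d/(2^d-1)$ is a slightly tidier bookkeeping of the same estimate.
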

\begin{proof}
The proof of the upper bound is exactly the same as in Theorem \ref{lnlowthm}.  For the lower bound, let $k$ be an integer such that
\[
n^{-1/d}\le 2^{-k}\le 2n^{-1/d}. 
\]
Take any configuration of $n$ points. For each $D\in \dm$, let $n_D$ be the number of points in $D$. Summing up the contributions to the energy from each cube, we get
\eq{
H_n(x_1,\ldots, x_n) &= \sum_{j=1}^\infty \sum_{D\in \dm_j} {n_D\choose 2} + {n\choose 2}\\
&\ge \sum_{j=1}^k \sum_{D\in \dm_j} {n_D\choose 2} + {n\choose 2}= \frac{1}{2}\sum_{j=1}^k \sum_{D\in \dm_j} n_D^2 -  \frac{nk}{2} + {n\choose 2}.
}
By the Cauchy--Schwarz inequality, for each $j$,
\eq{
\sum_{D\in\dm_j} n_D^2 &\ge \frac{1}{|\dm_j|}\biggl(\sum_{D\in \dm_j} n_D\biggr)^2 = \frac{n^2}{2^{dj}}. 
}
Thus,
\eq{
H_n(x_1,\ldots, x_n) &\ge \frac{n^2}{2}\sum_{j=1}^k 2^{-dj} - \frac{nk}{2} + {n\choose 2}= \frac{n^2}{2}\frac{1-2^{-dk}}{2^d-1}- \frac{nk}{2} + {n\choose 2}.
}
By our choice of $k$, this completes the proof.
\end{proof}

\subsection{Estimates for the partition function}
Recall that for a measurable function $f: \Sigma_n\to\rr$, its expected value under $\mu_{n,\beta}$ is denoted by $\mu_{n,\beta}(f)$.  
\begin{lmm}\label{znlmm2d}
There is a constant $C_2$ such that for any $n\ge 0$ and $\beta>0$,
\[
\exp\biggl(-\frac{2^d\beta n}{2^d-1} \biggr)\le \frac{Z(n+1, \beta)}{Z(n, \beta)}\le \exp\biggl(-\frac{2^d\beta n}{2^d-1} + C_2 \log (n+1)\biggr). 
\]
\end{lmm}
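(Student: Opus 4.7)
The plan is to mirror the proof of Lemma \ref{znlmm} almost line by line, with Lemma \ref{wlmm2d} and Theorem \ref{lnlowthm2d} playing the roles of Lemma \ref{wlmm} and Theorem \ref{lnlowthm}. As before, introduce
\[
f_n(x_1,\ldots,x_{n+1}) := \sum_{i=1}^n w(x_i,x_{n+1}),
\]
so that $H_{n+1}(x_1,\ldots,x_{n+1}) = f_n(x_1,\ldots,x_{n+1}) + H_n(x_1,\ldots,x_n)$, and study the two directions separately. Throughout, assume $n \ge 2$; the cases $n=0$ and $n=1$ will be handled at the end.

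For the lower bound, write
\[
\frac{Z(n+1,\beta)}{Z(n,\beta)} = \iint e^{-\beta f_n(x_1,\ldots,x_{n+1})}\,dx_{n+1}\,d\mu_{n,\beta}(x_1,\ldots,x_n),
\]
apply Jensen's inequality to pull the exponential outside, and then use Fubini together with Lemma \ref{wlmm2d} to compute $\int w(x_i,x_{n+1})\,dx_{n+1} = 2^d/(2^d-1)$ for each $i$. This yields $\iint f_n\,dx_{n+1}\,d\mu_{n,\beta} = 2^d n/(2^d-1)$, and hence the claimed lower bound.

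For the upper bound, invert the ratio and apply Jensen's inequality in the other direction:
\[
\frac{Z(n,\beta)}{Z(n+1,\beta)} = \mu_{n+1,\beta}\bigl(e^{\beta f_n}\bigr) \ge \exp\bigl(\beta\,\mu_{n+1,\beta}(f_n)\bigr).
\]
By the symmetry of $\mu_{n+1,\beta}$ under permutations of coordinates, $\mu_{n+1,\beta}(f_n) = n\mu_{n+1,\beta}(w(x_1,x_{n+1})) = \frac{2}{n+1}\mu_{n+1,\beta}(H_{n+1})$. Since $H_{n+1} \ge L_{n+1}$ pointwise, Theorem \ref{lnlowthm2d} gives
\[
\mu_{n+1,\beta}(f_n) \ge \frac{2}{n+1}\left(\binom{n+1}{2}\frac{2^d}{2^d-1} - C_1(n+1)\log(n+1)\right) = \frac{2^d n}{2^d-1} - 2C_1\log(n+1).
\]
Taking reciprocals gives the upper bound with a correction of the form $C\beta\log(n+1)$ in the exponent; absorbing the $\beta$ into the constant (as is done implicitly with $C_2\beta$ in Lemma \ref{znlmm}) produces the stated form.

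The cases $n=0$ and $n=1$ are handled exactly as in Lemma \ref{znlmm}: $n=0$ is trivial from the convention $Z(0,\beta)=Z(1,\beta)=1$; for $n=1$ the lower bound is a direct application of Jensen plus Lemma \ref{wlmm2d} to $Z(2,\beta) = \iint e^{-\beta w(x,y)}\,dx\,dy$, and the upper bound for this single value of $n$ can be forced by enlarging $C_2$. I expect no real obstacle here—the only nontrivial input is the $n\log n$ correction from Theorem \ref{lnlowthm2d}, which is precisely what turns the $n^{1/3}$ correction in the 3D proof into a $\log(n+1)$ correction in dimensions one and two.
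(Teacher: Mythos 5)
Your proposal is correct and is essentially identical to the paper's proof, which simply states that the argument for Lemma \ref{znlmm} goes through verbatim with Theorem \ref{lnlowthm2d} in place of Theorem \ref{lnlowthm}; your computation $\mu_{n+1,\beta}(f_n)\ge \frac{2^d n}{2^d-1}-2C_1\log(n+1)$ is exactly the intended step. One caveat: your derivation gives a correction term of the form $C\beta\log(n+1)$, and since $\beta>0$ is arbitrary it cannot be ``absorbed'' into a universal constant to yield the $\beta$-free term $C_2\log(n+1)$ as literally stated; the absence of $\beta$ there appears to be a typo in the paper (Corollary \ref{zncor2d} uses the bound in the form $C_2\beta|k|\log(n+|k|+1)$, consistent with what you actually proved), so your version is the right one.
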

\begin{proof}
The proof of Lemma \ref{znlmm} goes through verbatim, the only change being that we need to use Theorem \ref{lnlowthm2d} instead of Theorem \ref{lnlowthm}.\end{proof}
\begin{cor}\label{zncor2d}
For any $n\ge 0$, $\beta>0$, and any $k\ge -n$,
\[
\frac{Z(n+k, \beta)}{Z(n,\beta)} \le \exp\biggl(-\frac{2^d\beta nk}{2^d-1} -\frac{2^d\beta k(k-1)}{2(2^d-1)} + C_2 \beta |k| \log(n+|k|+1)\biggr),
\]
where $C_5$ is the constant from Lemma \ref{znlmm2d}.
\end{cor}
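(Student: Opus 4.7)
The plan is to mirror the proof of Corollary \ref{zncor} verbatim, with Lemma \ref{znlmm2d} replacing Lemma \ref{znlmm}; the only arithmetic change is that the $n^{1/3}$ correction is replaced by a $\log(n+1)$ correction, which is benign because it is still monotone in $n$.

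First consider $k\ge 0$. Write the ratio as a telescoping product
\[
\frac{Z(n+k,\beta)}{Z(n,\beta)} = \prod_{i=0}^{k-1}\frac{Z(n+i+1,\beta)}{Z(n+i,\beta)},
\]
and apply the upper bound of Lemma \ref{znlmm2d} to each factor. The linear terms sum to $-\frac{2^d\beta}{2^d-1}\sum_{i=0}^{k-1}(n+i) = -\frac{2^d\beta nk}{2^d-1} - \frac{2^d\beta k(k-1)}{2(2^d-1)}$, which reproduces the first two terms on the right-hand side. For the logarithmic corrections, note that $\log(n+i+1)$ is increasing in $i$, so $\sum_{i=0}^{k-1}\log(n+i+1)\le k\log(n+k) \le |k|\log(n+|k|+1)$, giving the $C_2\beta|k|\log(n+|k|+1)$ term.

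Now suppose $k<0$ and set $l=|k|\ge 1$. Telescope in the other direction:
\[
\frac{Z(n+k,\beta)}{Z(n,\beta)} = \prod_{i=0}^{l-1}\frac{Z(n-i-1,\beta)}{Z(n-i,\beta)},
\]
and apply the lower bound of Lemma \ref{znlmm2d} (reciprocated) to each factor. The exponent becomes $\frac{2^d\beta}{2^d-1}\sum_{i=0}^{l-1}(n-i-1) = \frac{2^d\beta nl}{2^d-1} - \frac{2^d\beta l(l+1)}{2(2^d-1)}$. Substituting $l=-k$, so that $nl=-nk$ and $l(l+1)=k(k-1)$, yields
\[
\exp\biggl(-\frac{2^d\beta nk}{2^d-1}-\frac{2^d\beta k(k-1)}{2(2^d-1)}\biggr),
\]
which is already bounded above by the claimed expression, since the remaining term $C_2\beta|k|\log(n+|k|+1)$ is nonnegative. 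Combining the two cases $k\ge 0$ and $k<0$ completes the proof. There is no real obstacle here; the only place to be mildly careful is bookkeeping the logarithmic sum in the $k\ge 0$ case, but the trivial monotonicity estimate handles it.
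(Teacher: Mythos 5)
Your proof is correct and is essentially the paper's own argument: the paper simply states that the proof of Corollary \ref{zncor} goes through verbatim with Lemma \ref{znlmm2d} in place of Lemma \ref{znlmm}, and you have carried out exactly that telescoping computation, with the only new point being the (correct) monotonicity bound $\sum_{i=0}^{k-1}\log(n+i+1)\le k\log(n+k+1)$ for the logarithmic corrections.
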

\begin{proof}
Again, the proof of Corollary \ref{zncor} goes through verbatim, except that we need to use Lemma \ref{znlmm2d} instead of Lemma \ref{znlmm}.
\end{proof}
\subsection{Proofs of the upper bounds}\label{proof2d}
Let us now fix some $n\ge 0$ and $\beta>0$. In the following, $(X_1,\ldots, X_n)$ will denote a random configuration drawn from the measure $\mu_{n,\beta}$. We will assume that $(X_1,\ldots,X_n)$ is defined on some abstract probability space $(\Omega, \mf, \pp)$. Expectation, variance and covariance with respect to $\pp$ will be denoted by $\ee$, $\var$ and $\cov$ respectively.
\begin{lmm}\label{varlmm2d}
Let $D_1,\ldots,D_{2^d}$ denote the $2^d$ elements of $\dm_1$, and for each $1\le i\le 2^d$, let $N_i := |\{j: X_j\in D_i\}|$. Then for each $i$, $\ee(N_i)= n/2^d$ and 
\[
\var(N_i)\le K(\beta) (\log (n+1))^2,
\]
where $K(\beta)$ is a non-increasing function of $\beta$.
\end{lmm}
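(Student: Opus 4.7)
The strategy is to emulate the proof of Lemma \ref{varlmm}, replacing the 3D tail scale $n^{1/3}$ throughout by $\log(n+1)$, which is the analogous scale arising from the weaker partition-function estimate of Corollary \ref{zncor2d}.

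I would first derive the coarse-graining recursion for $Z(n,\beta)$. The self-similarity of $w$ yields $w(x,y) = 1 + w_k(x',y')$ when $x,y$ both lie in the same $D_k\in\dm_1$ (with $x',y'$ their images under the rescaling $D_k \to [0,1)^d$), and $w(x,y) = 1$ whenever $x,y$ lie in different elements of $\dm_1$. Summing over all pairs gives the key identity
\[
H_n(x_1,\ldots,x_n) = \binom{n}{2} + \sum_{k=1}^{2^d} H_{n_k}(\text{points in }D_k),
\]
where the sub-energies are computed with respect to a rescaled copy of $w$ at the \emph{same} $\beta$. A notable simplification over the 3D case is that the coarse-grained interaction factor $e^{-\beta\binom{n}{2}}$ depends only on $n$. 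Integrating positions within each sub-cube then shows that
\[
\pp(N_1=n_1,\ldots,N_{2^d}=n_{2^d}) \;\propto\; \frac{n!}{\prod_i n_i!}\prod_{i=1}^{2^d} Z(n_i,\beta),
\]
with constant of proportionality depending only on $n$ and $\beta$. Unlike in 3D there is no separate ``$g$'' factor of the form $e^{-\beta\sum_{i<j}n_i n_j}$ to track.

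Next I would fix balanced integers $m_1,\ldots,m_{2^d}$ with $|m_i-n/2^d|\le 1$ and $\sum m_i = n$, and for $(k_i)$ with $\sum k_i = 0$ and $0\le m_i+k_i\le n$ analyze
\[
\frac{\pp(N=m+k)}{\pp(N=m)} = \frac{f(m+k)}{f(m)}\prod_i \frac{Z(m_i+k_i,\beta)}{Z(m_i,\beta)},
\]
where $f$ denotes the multinomial coefficient. Applying Corollary \ref{zncor2d} factorwise, using $\sum k_i=0$ to reduce $\sum m_i k_i$ to $O(\sum|k_i|)$, and bounding $\log(n+|k_i|+1) \le \log(2n+2)$, gives
\[
\prod_i \frac{Z(m_i+k_i,\beta)}{Z(m_i,\beta)} \le \exp\!\Bigl(-c_1\beta\sum k_i^2 + c_2\beta\log(n+2)\sum|k_i|\Bigr)
\]
for positive constants $c_1,c_2$ depending only on $d$. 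Since $\sum k_i=0$ forces $\sum|k_i|\le 2^d\max_i|k_i|$, one may choose $C_5$ large enough so that $\max_i|k_i|\ge C_5\log(n+2)$ makes the quadratic absorb the linear term and leaves a bound of the form $(f(m+k)/f(m))\,e^{-c_3\beta(\log(n+2))^2}$. Let $A$ denote the set of admissible tuples with $\max_i|n_i-m_i|\ge C_5\log(n+2)$. Summing over $A$ with the multinomial identity $\sum f = (2^d)^n$ and the Stirling bound $f(m)\ge c\,n^{-(2^d-1)/2}(2^d)^n$ yields
\[
\pp((N_1,\ldots,N_{2^d})\in A) \le C\,n^{(2^d-1)/2}\,e^{-c_3\beta(\log(n+2))^2},
\]
which decays faster than any power of $n$. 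The claimed variance bound then follows from $\var(N_i) \le (C_5\log(n+2))^2 + n^2\,\pp(N\in A) \le K(\beta)(\log(n+1))^2$, with $K(\beta)$ non-increasing in $\beta$ (the tail bound decreases in $\beta$) and inflated to absorb the finitely many small-$n$ cases.

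The main obstacle compared to the 3D proof is entirely quantitative: the $|k|\log n$ correction in Corollary \ref{zncor2d} is much weaker than the $|k|n^{1/3}$ correction in its 3D counterpart, so the effective tail scale of $N_i$ is only $\log n$, producing $(\log n)^2$ variance rather than $n^{2/3}$. Structurally the 2D/1D calculation is in fact \emph{cleaner} than the 3D one because the cross-interaction factor at level $1$ reduces to a function of $n$ alone, so there is no interplay between the $g$ and $h$ factors to manage.
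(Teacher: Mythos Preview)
Your proposal is correct and follows essentially the same approach as the paper's own proof. You correctly identify the key structural simplification in the 2D/1D case---that the level-$1$ cross-interaction collapses to the constant factor $e^{-\beta\binom{n}{2}}$, so the probability ratio involves only the multinomial $f$ and the product $h$ of partition functions---and you replace the $n^{1/3}$ tail scale from the 3D argument by $\log n$ via Corollary \ref{zncor2d}, exactly as the paper does.
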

\begin{proof}
We have already defined universal constants $C_1$ and $C_2$ in the previous subsections. In this proof, we will denote further universal constants by $C_3, C_4,\ldots$ without explicitly mentioning that they denote universal constants on each occasion.

The identity $\ee(N_i)=n/2^d$ follows by symmetry. We will now prove the claimed bound on the variance. The cases $n=0$ and $n=1$ are trivial, so assume that $n\ge 2$. As in the proof of Lemma \ref{varlmm}, we have a recursion for the partition function, although the recursion is slightly different due to the different nature of the potential:
\eq{
&Z(n,\beta) \\
&= \sum_{\substack{0\le n_1,\ldots, n_{2^d}\le n\\ n_1+\cdots +n_{2^d}=n}} \frac{n!}{n_1!n_2!\cdots n_{2^d}!} e^{-\beta\sum_{1\le i<j\le 2^d} n_i n_j}\prod_{i=1}^{2^d} (2^{-dn_i}Z(n_i,\beta)e^{-\beta {n_i\choose 2}})\\
&= \sum_{\substack{0\le n_1,\ldots, n_{2^d}\le n\\ n_1+\cdots +n_{2^d}=n}} \frac{2^{-dn}e^{-\beta {n\choose 2}}n!}{n_1!n_2!\cdots n_{2^d}!} \prod_{i=1}^{2^d} Z(n_i,\beta).
}
Moreover, for any $(n_1,\ldots, n_{2^d})$ occurring in the above sum,
\eq{
\pp(N_1=n_1,\ldots, N_{2^d}=n_{2^d}) &= \frac{2^{-dn}e^{-\beta {n\choose 2}}n!}{n_1!n_2!\cdots n_{2^d}!} \frac{\prod_{i=1}^{2^d} Z(n_i,\beta) }{Z(n,\beta)}.
}
Choose nonnegative integers $m_1,\ldots, m_{2^d}$ such that $m_1+\cdots+m_{2^d}=n$ and $|m_i-n/2^d|\le 1$ for each $i$. For convenience, let
\eq{
f(n_1,\ldots, n_{2^d}) := \frac{n!}{n_1!n_2!\cdots n_{2^d}!}, \ \ \ h(n_1,\ldots, n_{2^d}) := \prod_{i=1}^{2^d} Z(n_i,\beta).
}
Take any $k_1,\ldots,k_{2^d}\in \zz$ such that $k_1+\cdots+k_{2^d}=0$ and $0\le m_i+k_i\le n$ for each~$i$. Then by Corollary \ref{zncor2d},
\eq{
&\frac{h(m_1+k_1,\ldots, m_{2^d}+k_{2^d})}{h(m_1,\ldots, m_{2^d})} \\
&\le \prod_{i=1}^{2^d} \exp\biggl(-\frac{2^d\beta m_ik_i}{2^d-1} -\frac{2^d\beta k_i(k_i-1)}{2(2^d-1)} + C_2 \beta |k_i| \log(n+|k_i|+1)\biggr)\\
&\le \prod_{i=1}^{2^d} \exp\biggl(-\frac{2^d\beta (nk_i/2^d - |k_i|)}{2^d-1} -\frac{2^d\beta k_i(k_i-1)}{2(2^d-1)} + 2C_2\beta |k_i|\log n\biggr)\\
&\le \exp\biggl(-\frac{2^d\beta}{2(2^d-1)}\sum_{i=1}^{2^d} k_i^2 + C_3\beta \log n\sum_{i=1}^{2^d} |k_i|\biggr).
}
Therefore,
\eq{
&\frac{\pp(N_1=m_1+k_1,\ldots, N_{2^d}=m_{2^d}+k_{2^d})}{\pp(N_1=m_1,\ldots, N_{2^d}=m_{2^d})}\\
&\le \frac{f(m_1+k_1,\ldots, m_{2^d}+k_{2^d})}{f(m_1,\ldots, m_{2^d})} \exp\biggl(-\frac{2\beta}{3}\sum_{i=1}^{2^d} k_i^2 + C_3 \beta \log n \sum_{i=1}^{2^d}|k_i|\biggr).}
This shows that there are positive constants $C_4$ and $C_5$ such that if 
\[
\max_{1\le i\le 2^d} |k_i|\ge C_4\log n,
\]
then 
\eq{
&\frac{\pp(N_1=m_1+k_1,\ldots, N_{2^d}=m_{2^d}+k_{2^d})}{\pp(N_1=m_1,\ldots, N_{2^d}=m_{2^d})}\\
&\le \frac{f(m_1+k_1,\ldots, m_{2^d}+k_{2^d})}{f(m_1,\ldots, m_{2^d})} e^{-C_5\beta (\log n)^2}. 
}
It is now easy to complete the proof by imitating the last part of the proof of Lemma \ref{varlmm}.
\end{proof}
For a Borel set $A\subseteq [0,1)^d$, let $X(A)$ and $N(A)$ be defined as before. Also, define $\{\mf_k\}_{k\ge 0}$ as before.
\begin{lmm}\label{condlmm2d}
Conditional on $\mf_k$, the random sets $\{X(D): D\in \dm_k\}$ are mutually independent. Moreover, for any $D\in \dm_k$, conditional on $\mf_k$, $X(D)$ has the same distribution as a scaled version of a point process from the measure $\mu_{N(D), \beta}$.
\end{lmm}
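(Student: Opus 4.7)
The plan is to mimic the proof of Lemma~\ref{condlmm}, the key change being that in dimensions one and two the potential depends \emph{additively} on the level $k(x,y)$ rather than multiplicatively on $2^{k(x,y)}$. As a consequence, after rescaling a dyadic cube of level $k$ back to $[0,1)^d$, the inverse temperature does not get multiplied by $2^k$; it stays equal to $\beta$. Recognising this shift from multiplicative to additive is the conceptual heart of the proof.

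First I would write the joint density of $(X_1,\ldots,X_n)$ at a point $(x_1,\ldots,x_n)$ as
\[
\frac{1}{Z(n,\beta)} \exp\!\biggl(-\beta\sum_{D\in \dm_k} H_D(x_1,\ldots,x_n) - \beta R_k(x_1,\ldots,x_n)\biggr),
\]
where $H_D$ collects the interactions between pairs of points both lying in $D$, and $R_k$ collects interactions between pairs lying in distinct elements of $\dm_k$. As in the 3D proof, the critical observation is that for any pair with $x_i\in D$ and $x_j\in D'$ with $D\ne D'$ in $\dm_k$, the separation level $k(x_i,x_j)\le k$ is a function of $D$ and $D'$ alone, so $R_k$ depends only on the counts $\{n_D:D\in\dm_k\}$ and is therefore $\mf_k$-measurable.

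Next I would handle $H_D$. For any two distinct points $x,y\in D\in\dm_k$, the level $k(x,y)$ is at least $k+1$; moreover, if $\phi_D:D\to[0,1)^d$ is the affine scaling map and $\tilde x = \phi_D(x)$, $\tilde y=\phi_D(y)$, then the separation level in the rescaled cube satisfies $\tilde k(\tilde x,\tilde y) = k(x,y)-k$, so $w(x,y) = k + \tilde w(\tilde x,\tilde y)$. Summing over all pairs inside $D$ gives
\[
H_D = k\binom{n_D}{2} + \tilde H_{n_D}\bigl(\{\tilde x_i : x_i\in D\}\bigr).
\]
The term $k\binom{n_D}{2}$ depends only on $n_D$, so it can be absorbed into the $\mf_k$-measurable factor together with $R_k$.

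Finally, plugging this decomposition into the density and performing the change of variables $\tilde x_i = \phi_D(x_i)$ for each $D\in\dm_k$ separately (which contributes a Jacobian $2^{-dkn_D}$ per cube) one obtains a density of the form
\[
\Psi(\{n_D\}) \cdot \prod_{D\in\dm_k}\Bigl[\tfrac{1}{Z(n_D,\beta)}\exp\bigl(-\beta \tilde H_{n_D}(\{\tilde x_i\}_{x_i\in D})\bigr)\Bigr],
\]
where $\Psi$ is some function of the counts alone (the residual combinatorial factor is cancelled by bringing in the appropriate $Z(n_D,\beta)$'s). This product representation makes the two conclusions immediate: given $\mf_k$, the rescaled families $\phi_D(X(D))$ are independent, and each is distributed according to $\mu_{N(D),\beta}$, i.e.\ $X(D)$ is a scaled copy of a $\mu_{N(D),\beta}$ configuration. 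The only real subtlety is verifying that the bookkeeping of constants and Jacobians leaves $\beta$ unchanged (in contrast to the $2^k\beta$ appearing in Lemma~\ref{condlmm}), which is precisely the manifestation of the logarithmic/additive nature of the 2D and 1D potentials.
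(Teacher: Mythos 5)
Your proof is correct and follows the paper's argument exactly: the paper proves this lemma by citing the 3D proof (decomposing the energy into within-cube terms $H_D$ and a cross term $R_k$ that depends only on the counts) and noting that, because the potential is now additive in the level ($w(x,y)=k+\tilde w(\tilde x,\tilde y)$ under rescaling, with the $k\binom{n_D}{2}$ term absorbed into the count-dependent factor), the inverse temperature stays $\beta$ instead of becoming $2^k\beta$. You have simply written out in full the details the paper leaves implicit, and they all check out.
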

\begin{proof}
The proof is the same as the proof of Lemma \ref{condlmm}, except that $\beta$ need not be replaced by $2^k \beta$ due to the different nature of the potential.
\end{proof}
\begin{lmm}\label{condcalc2d}
If $D\in \dm_k$ and $D'$ is a child of $D$, then  
\[
\ee(N(D')|\mf_k) = \frac{N(D)}{2^d}
\]
and
\eq{
\var(N(D')|\mf_k) &\le K(\beta) (\log (N(D)+1))^{2},
}
where $K$ is the function from Lemma \ref{varlmm2d}.
\end{lmm}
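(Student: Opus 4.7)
The plan is to mirror the 3D argument from Lemma~\ref{condcalc} almost verbatim, relying on Lemma~\ref{condlmm2d} and Lemma~\ref{varlmm2d} as the two black boxes. First I would invoke Lemma~\ref{condlmm2d}: conditional on $\mf_k$, the point configuration $X(D)$ is (after rescaling $D$ to the unit cube) distributed according to $\mu_{N(D),\beta}$. The one substantive feature to keep in mind — and it is already baked into Lemma~\ref{condlmm2d} — is that in dimensions $1$ and $2$ the inverse temperature remains $\beta$ under the rescaling, not $2^k\beta$ as in the 3D case, because here $w(x,y)=k(x,y)$ transforms under rescaling by a cube in $\dm_k$ as $w(x,y) = k + w(x',y')$, and the additive constant $\binom{n}{2}k$ drops out of the Gibbs density.

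For the conditional expectation I would simply use symmetry: the measure $\mu_{N(D),\beta}$ is invariant under permutations of the $2^d$ first-level dyadic sub-cubes, so writing $\sum_{D' \text{ child of } D} \ee(N(D')\mid \mf_k) = N(D)$ and equidistributing across the $2^d$ children gives $\ee(N(D')\mid\mf_k) = N(D)/2^d$. For the conditional variance I would apply Lemma~\ref{varlmm2d} directly to the conditional law $\mu_{N(D),\beta}$: the variance of the count of points landing in a specified first-level sub-cube under $\mu_{m,\beta}$ is at most $K(\beta)(\log(m+1))^2$, so plugging in $m=N(D)$ yields the required bound.

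There is essentially no obstacle here — this lemma is a direct conditional translation of the marginal estimate Lemma~\ref{varlmm2d}. The only thing worth flagging is that, in contrast to the 3D situation, we do not need to invoke the monotonicity of $K$ in $\beta$ to absorb a factor $2^k$: the constant $K(\beta)$ produced by Lemma~\ref{varlmm2d} is already the correct one, precisely because the rescaling preserves the inverse temperature in the hierarchical logarithmic potential.
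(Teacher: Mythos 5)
Your proposal is correct and follows the paper's proof exactly: the conditional expectation comes from Lemma~\ref{condlmm2d} plus symmetry, and the conditional variance bound comes from applying Lemma~\ref{varlmm2d} to the conditional law $\mu_{N(D),\beta}$. Your remark that the monotonicity of $K(\beta)$ is not needed here (unlike in the 3D Lemma~\ref{condcalc}, where one must note $K(2^k\beta)\le K(\beta)$) is exactly the right observation about the only substantive difference from the 3D case.
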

\begin{proof}
The formula for the conditional expectation follows from Lemma~\ref{condlmm} and symmetry, and the bound on the conditional variance follows from Lemma \ref{condlmm2d} and Lemma \ref{varlmm2d}.
\end{proof}
\begin{lmm}\label{expvarlmm2d}
For any $D\in \dm$, $\ee(N(D))=\leb(D)n$ and 
\[
\var(N(D)) \le C(\beta) (\log(2^d\leb(D)n+3))^2,
\]
where $C(\beta)$ depends only on $\beta$.
\end{lmm}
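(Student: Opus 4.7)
The expectation formula $\ee(N(D))=\leb(D)n$ follows by iterating the conditional expectation identity in Lemma \ref{condcalc2d}: if $D\in\dm_k$ has parent $D'\in\dm_{k-1}$, then $\ee(N(D))=\ee[\ee(N(D)|\mf_{k-1})]=2^{-d}\ee(N(D'))$, so induction on $k$ starting from the deterministic value $N(D_0)=n$ gives the claim. For the variance, I plan to mirror the structure of the 3D proof (Lemma \ref{expvarlmm}) via the recursive identity
\[
\ee(N(D)^2)=\ee[\var(N(D)|\mf_{k-1})]+2^{-2d}\ee(N(D')^2),
\]
together with the conditional variance bound $\var(N(D)|\mf_{k-1})\le K(\beta)(\log(N(D')+1))^2$ from Lemma \ref{condcalc2d}. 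Iterating down the chain of dyadic ancestors $D=D_k,D_{k-1},\ldots,D_0=[0,1)^d$ and subtracting $(\leb(D)n)^2$ yields
\[
\var(N(D))\le K(\beta)\sum_{j=1}^k 2^{-2d(k-j)}\,\ee[(\log(N(D_{j-1})+1))^2].
\]

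The crux is to bound each $\ee[(\log(N+1))^2]$ by a multiple of $(\log(\ee N+3))^2$, playing the role of the Jensen step $\ee(N^{2/3})\le(\ee N)^{2/3}$ in the 3D proof. The main obstacle is that $(\log(x+1))^2$ is \emph{not} globally concave (it is convex on $[0,e-1]$), so Jensen cannot be applied to it directly. I plan to circumvent this via the elementary inequality $\log(x+1)\le(x+1)^\eta/\eta$, valid for all $x\ge 0$ and $\eta>0$, which yields $(\log(x+1))^2\le (x+1)^{2\eta}/\eta^2$. Since $x\mapsto(x+1)^{2\eta}$ \emph{is} concave whenever $\eta\in(0,1/2)$, Jensen's inequality now applies to give $\ee[(N+1)^{2\eta}]\le(\ee N+1)^{2\eta}$. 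Choosing $\eta=1/\log(\ee N+1)$ when $\ee N+1\ge e^2$ and $\eta=1/2$ otherwise, optimization produces the bound
\[
\ee[(\log(N+1))^2]\le C(\log(\ee N+3))^2
\]
for a universal constant $C$.

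To assemble the final estimate, note that the ancestor $D_{j-1}$ satisfies $\ee N(D_{j-1})=2^{d(k-j+1)}\leb(D)n$, and hence $\log(\ee N(D_{j-1})+3)\le d(k-j)\log 2+\log(2^d\leb(D)n+3)$. Squaring this bound and summing against the geometric weights $2^{-2d(k-j)}$, the convergence of $\sum_{m\ge 0}2^{-2dm}$ and $\sum_{m\ge 0}m^2 2^{-2dm}$ shows that the total is bounded by a $\beta$-dependent constant times $(\log(2^d\leb(D)n+3))^2$. The additive correction coming from the $m^2$ piece is absorbed by the uniform lower bound $(\log(2^d\leb(D)n+3))^2\ge(\log 3)^2>0$, so the estimate closes cleanly and produces the claimed variance bound.
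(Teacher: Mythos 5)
Your proof is correct, and its skeleton — iterate the conditional variance recursion down the chain of dyadic ancestors, bound each term $\ee[(\log(N(D_{j-1})+1))^2]$ by a multiple of $(\log(\ee N(D_{j-1})+3))^2$, then sum the logarithms squared against the geometric weights $2^{-2d(k-j)}$ and absorb the leftover constant using $(\log(2^d\leb(D)n+3))^2\ge(\log 3)^2$ — is exactly the paper's. The one place you diverge is the Jensen step. You correctly observe that $(\log(x+1))^2$ fails to be concave near the origin and work around it with $\log(1+x)\le(1+x)^\eta/\eta$, concavity of $(1+x)^{2\eta}$ for $\eta\le 1/2$, and an optimization over $\eta$; this is valid and yields the required bound with a universal constant. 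The paper's fix is shorter: it first passes from $\ee((\log(N+1))^2)$ to $\ee((\log(N+3))^2)$ and then applies Jensen directly, using that $x\mapsto(\log(x+3))^2$ \emph{is} concave on all of $[0,\infty)$ — its second derivative is $2(1-\log(x+3))/(x+3)^2\le 0$ precisely because $\log 3>1$. This is, of course, why the statement is phrased with the shift $+3$ rather than $+1$. Your $\eta$-trick buys nothing here beyond what the shift already provides, but it is more robust (it would survive if the additive constant in the statement were too small for direct concavity), and it closes the argument correctly.
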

\begin{proof}
Suppose that $D\in \dm_k$. The formula for the expectation follows easily by iterating the formula for the conditional expectation from Lemma \ref{condcalc2d}, and observing that $\leb(D)=2^{-dk}$. Next, let $D'$ be the parent of $D$. Then by Lemma \ref{condcalc2d}, the formula for expected value, and the concavity of the map $x\mapsto (\log (x+3))^2$ on the nonnegative axis,
\eq{
\ee(N(D)^2) &= \ee(N(D)^2 - (\ee(N(D)|\mf_{k-1}))^2) + \ee((\ee(N_{D}|\mf_{k-1}))^2)\\
&= \ee(\var(N(D)|\mf_{k-1})) + 2^{-2d} \ee(N(D')^2)\\
&\le K(\beta) \ee((\log (N(D')+1))^{2}) + 2^{-2d} \ee(N(D')^2)\\
&\le K(\beta) \ee((\log (N(D')+3))^{2}) + 2^{-2d} \ee(N(D')^2)\\
&\le K(\beta) (\log\ee(N(D')+3))^{2} + 2^{-2d} \ee(N(D')^2)\\
&= K(\beta) (\log(2^{-d(k-1)}n+3))^{2} + 2^{-2d}\ee(N(D')^2). 
}
Iterating this, we get
\eq{
\ee(N(D)^2) &\le  K(\beta)\sum_{r=0}^{k-1}(\log(2^{d+rd}\leb(D)n+3))^22^{-2rd} + 2^{-2dk}n^2.
}
Now note that for any $r\ge0$,
\eq{
\frac{\log(2^{d+rd}\leb(D)n+3)}{\log(2^{d}\leb(D)n+3)}&\le \frac{\log(2^{d}\leb(D)n+3) + \log 2^{rd}}{\log(2^{d}\leb(D)n+3)}\\
&= 1+  \frac{rd\log 2}{\log(2^{d}\leb(D)n+3)}\le 1+  \frac{rd\log 2}{\log3}.
}
Thus, 
\eq{
\ee(N(D)^2) &\le K(\beta)(\log(2^d\leb(D)n+3))^2\sum_{r=0}^\infty\biggl(1+\frac{rd\log 2}{\log 3}\biggr)^22^{-2rd} \\
&\qquad + 2^{-2dk}n^2\\
&\le C(\beta)(\log(2^d\leb(D)n+3))^2 + 2^{-2dk}n^2,
}
where $C(\beta)$ depends only on $\beta$. This completes the proof, since $\ee(N(D)) = \leb(D)n = 2^{-dk}n$.
\end{proof}
Now take any nonempty open set $U\subseteq [0,1)^d$  with regular boundary, and let $A(U)$ be defined as in \eqref{audef}. Define $\uu$, $\uu_j$, $\vv_j$ and $M_j$ as in the 3D case. It is easy to see that Lemma \ref{ulmm}, Corollary \ref{expcor} and Lemma \ref{martingale} remain valid in the 2D and 1D cases.
\begin{lmm}\label{mjvar2d}
For any $j\ge 1$ such that $\sqrt{d}\cdot 2^{-j+1}\le \diam(U)$, 
\[
\var(M_j)\le  C(\beta) A(U) (\log(2^{-d(j-1)} n+3))^{2} 2^{(d-1)j} + \var(M_{j-1}),
\]
where $C(\beta)$ is a constant that depends only on $\beta$. 
\end{lmm}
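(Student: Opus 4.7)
The plan is to copy the structure of Lemma~\ref{mjvar}, substituting the logarithmic conditional-variance estimate from Lemma~\ref{condcalc2d} wherever the $n^{2/3}$ bound from Lemma~\ref{condcalc} was used. First I invoke the martingale property of $\{M_j\}$ (Lemma~\ref{martingale}) to split
\[
\var(M_j) = \ee(\var(M_j|\mf_{j-1})) + \var(M_{j-1}),
\]
reducing the task to bounding the first term. Expanding the conditional variance as a double sum of covariances over $\uu_j\cup\vv_j$ and using Lemma~\ref{condlmm2d}, every pair $(D,D')$ whose parents differ contributes zero, so only sibling pairs survive, and their common parent $D''$ must lie in $\vv_{j-1}$.

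For such a sibling pair, Cauchy--Schwarz combined with Lemma~\ref{condcalc2d} yields
\[
|\cov(N(D),N(D')|\mf_{j-1})|\le K(\beta)(\log(N(D'')+1))^2.
\]
Taking expectations, the key step is to apply Jensen's inequality to the concave function $x\mapsto (\log(x+3))^2$ on $[0,\infty)$ (this is the same concavity device already used in the proof of Lemma~\ref{expvarlmm2d}; since $\log(x+3)\ge\log 3>1$ for $x\ge 0$, the second derivative is negative), which gives
\[
\ee((\log(N(D'')+1))^2)\le (\log(\ee N(D'')+3))^2 = (\log(2^{-d(j-1)}n+3))^2.
\]
Because each $D\in\uu_j\cup\vv_j$ has at most $2^d-1$ siblings in $\uu_j\cup\vv_j$ and $p(D')\le 1$, the sibling double sum is controlled by a constant multiple of $\sum_{D\in\uu_j\cup\vv_j}p(D)$.

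The final step is to turn $\sum p(D)$ into a boundary-layer volume. Since $\leb(D)=2^{-dj}$ and $p(D)\leb(D)=\leb(D\cap U)$, the sum equals $2^{dj}\sum_{D\in\uu_j\cup\vv_j}\leb(D\cap U)$. Each such $D\cap U$ sits within distance $\sqrt{d}\cdot 2^{-j+1}$ of $\partial U$, which by hypothesis is at most $\diam(U)$, so the regularity bound \eqref{audef} gives
\[
\sum_{D\in\uu_j\cup\vv_j}\leb(D\cap U)\le A(U)\sqrt{d}\cdot 2^{-j+1}.
\]
Putting everything together produces the factor $2^{dj}\cdot 2^{-j+1}=2\cdot 2^{(d-1)j}$, delivering exactly the claimed bound on $\ee(\var(M_j|\mf_{j-1}))$.

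I do not anticipate real obstacles: the argument is a mechanical translation of the 3D proof. The one subtle point to watch is that Jensen must be applied to $(\log(x+3))^2$ rather than $(\log(x+1))^2$, since only the former is concave on all of $[0,\infty)$; this is handled by first majorizing $\log(N(D'')+1)\le \log(N(D'')+3)$ before invoking concavity, exactly as in Lemma~\ref{expvarlmm2d}.
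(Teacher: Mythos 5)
Your proof is correct and follows essentially the same route as the paper's: the martingale decomposition, the vanishing of non-sibling covariances via Lemma~\ref{condlmm2d}, the Cauchy--Schwarz plus Lemma~\ref{condcalc2d} bound on sibling covariances, the concavity of $x\mapsto(\log(x+3))^2$ to pass to expectations, and the boundary-layer volume estimate from \eqref{audef}. Your explicit justification of the concavity (checking $\log(x+3)>1$ for $x\ge 0$) is a correct detail the paper leaves implicit.
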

\begin{proof}
In this proof, $C(\beta)$ will denote any constant that depends only on $\beta$. Equations \eqref{varvar} and \eqref{mjmj} are still valid. 
If $D,D'\in \uu_j\cup\vv_j$ have different parents, then $N(D)$ and $N(D')$ are conditionally independent by Lemma~\ref{condlmm2d}, and hence the conditional covariance is zero. Otherwise, Lemma \ref{condcalc2d} and the Cauchy--Schwarz inequality imply that 
\[
|\cov(N(D), N(D')|\mf_{j-1})|\le C(\beta)(\log (N(D'')+1))^{2},
\]
where $D''$ is the parent of $D$ and $D'$. 
Thus, by Lemma \ref{expvarlmm2d} and the concavity of the map $x\mapsto (\log(x+3))^2$ on the nonnegative real axis,
\eq{
|\ee(\cov(N(D), N(D')|\mf_{j-1}))| &\le C(\beta) (\log(\leb(D'') n+3))^{2} \\
&= C(\beta) (\log(2^{-d(j-1)} n+3))^{2}.
}
On the other hand,  each $D\in \uu_j\cup \vv_j$ has at most $2^d-1$ siblings  that belong to $\uu_j\cup \vv_j$. Since $p(D)2^{-dj} = p(D)\leb(D) = \leb(D\cap U)$, this shows that 
\eq{
\ee(\var(M_j|\mf_{j-1})) &\le C(\beta) (\log(2^{-d(j-1)} n+3))^{2}\sum_{D\in \uu_j\cup \vv_j} 2^dp(D)\\
&= C(\beta) (\log(2^{-d(j-1)} n+3))^{2} 2^{d(j+1)}\sum_{D\in \uu_j\cup \vv_j}\leb(D\cap U).
}
Note that each element of 
\[
\bigcup_{D\in \uu_j\cup \vv_j}( D\cap U)
\]
is within distance $\sqrt{d}\cdot2^{-j+1}$ of $\partial U$. Since $\sqrt{d}\cdot 2^{-j+1}\le \diam(U)$, inequality~\eqref{audef} gives 
\[
\sum_{D\in \uu_j\cup \vv_j}\leb(D\cap U)\le A(U)\sqrt{d}\cdot 2^{-j+1}.
\]
Consequently,
\[
\ee(\var(M_j|\mf_{j-1})) \le C(\beta) A(U) (\log(2^{-d(j-1)} n+3))^{2} 2^{(d-1)j},
\]
where $C(\beta)$ depends only on $\beta$. 
The proof is completed by plugging this bound into \eqref{varvar}. 
\end{proof}
We now have all the ingredients for proving the following analog of Theorem \ref{mainthm}.
\begin{thm}[Hyperuniformity at all scales in 2D and 1D]\label{mainthm2d}
Let $U$ and $N(U)$ be as in Theorem~\ref{macthm}. Suppose that $\diam(U)\ge n^{-1/d}$. Let $A(U)$ be the constant defined in \eqref{audef}. Then 
\[
\ee(N(U)) = \leb(U)n
\]
and 
\eq{
\var(N(U))&\le C(\beta)(A(U)n^{(d-1)/d}+1) (\log(7\diam(U)^d n))^2,
}
where $C(\beta)$ is a constant that depends only on $\beta$. 
\end{thm}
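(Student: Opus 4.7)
The plan is to mirror the proof of Theorem \ref{mainthm} in Section \ref{proof3d} verbatim, substituting each 3D input by its 2D/1D counterpart: Lemmas \ref{expvarlmm2d}, \ref{condlmm2d}, \ref{condcalc2d} and \ref{mjvar2d} in place of Lemmas \ref{expvarlmm}, \ref{condlmm}, \ref{condcalc} and \ref{mjvar}. First I would fix two dyadic scales. The \emph{microscopic} scale $k$ is chosen so that $\tfrac12 n^{-1/d} \le \sqrt d\cdot 2^{-k}\le n^{-1/d}$; this, combined with Lemma \ref{expvarlmm2d}, ensures $\ee(N(D)^2) \le C(\beta)$ uniformly over $D\in \dm_k$. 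The \emph{macroscopic} scale $l$ is the smallest integer with $\sqrt d\cdot 2^{-l}\le \diam(U)$, which forces $\uu_i = \emptyset$ for $i<l$ and makes $U$ intersect only a bounded ($d$-dependent) number of cubes in $\dm_l$.

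With these scales in hand, I would reuse the 3D decomposition
\[
N(U) = \sum_{j=0}^{k}\sum_{D\in\uu_j}N(D) + \sum_{D\in\vv_k}N(D\cap U),
\]
apply Lemma \ref{condlmm2d} to conclude $\ee(N(U)\mid \mf_k) = M_k$, and split $\var(N(U)) = \ee(\var(N(U)\mid \mf_k)) + \var(M_k)$. The conditional-variance piece is handled exactly as in 3D: conditional independence across $D\in\vv_k$ reduces it to $\sum_{D\in \vv_k}p(D)\ee(N(D)^2)$, the choice of $k$ makes each $\ee(N(D)^2)$ of constant order, and the regularity of $\partial U$ together with $\diam(U)\ge n^{-1/d}$ delivers $\sum_{D\in\vv_k}\leb(D\cap U) \le C\,A(U)\cdot 2^{-k}$. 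Multiplying by the $2^{dk}$ factor that converts Lebesgue measures into expected counts gives $\ee(\var(N(U)\mid \mf_k))\le C(\beta)A(U)\, 2^{(d-1)k}\le C(\beta)A(U) n^{(d-1)/d}$.

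For $\var(M_k)$ I would telescope along the filtration,
\[
\var(M_k) = \var(M_l) + \sum_{j=l+1}^{k}\ee(\var(M_j\mid \mf_{j-1})),
\]
estimate $\var(M_l)$ by applying Lemma \ref{expvarlmm2d} to the bounded number of cubes in $\uu_l\cup \vv_l$ (each of measure at most $\diam(U)^d$) to get $C(\beta)(\log(7\diam(U)^d n))^2$, and bound each increment by Lemma \ref{mjvar2d}, namely by $C(\beta) A(U)(\log(2^{-d(j-1)}n+3))^2 \, 2^{(d-1)j}$. For $d=2$ the geometric weight $2^j$ dominates: after reindexing $r=k-j$ the sum becomes $2^k\sum_{r\ge 0}(r+O(1))^2 2^{-r} = O(n^{1/2})$, so the martingale contribution is bounded by $C(\beta) A(U) n^{1/2}$ without an extra log. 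For $d=1$ the geometric weight is trivial and I would bound each logarithmic factor by its maximum $(\log(7\diam(U)n))^2$; the $k-l = O(\log(\diam(U)n))$ scales are then absorbed into the stated $(\log(7\diam(U)^d n))^2$. Merging the two pieces using $X+Y\le (X+1)Y$ for $Y\ge 1$ yields the claimed $C(\beta)(A(U)n^{(d-1)/d}+1)(\log(7\diam(U)^d n))^2$.

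The main obstacle is logarithmic bookkeeping: one must verify that every $\log(\cdot+3)$ arising from the variance and covariance estimates of Lemmas \ref{expvarlmm2d} and \ref{mjvar2d} is uniformly dominated by $\log(7\diam(U)^d n)$, using $\diam(U)\ge n^{-1/d}$ to collapse scale-dependent logarithms, and that the dyadic telescope between scales $l$ and $k$ does not introduce additional logarithmic losses beyond those stated. The case $d=1$ is marginally more delicate than $d=2$, since without the geometric weight $2^j$ to concentrate the telescope at its last term, one pays a logarithmic-in-the-number-of-scales factor whose absorption relies on the fact that the prefactor $A(U)n^{(d-1)/d}$ degenerates to $A(U)$ in one dimension, leaving headroom inside the stated bound. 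Apart from this bookkeeping, every step is a direct transcription of the argument in Section \ref{proof3d}.
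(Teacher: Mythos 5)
Your proposal is a faithful, step-by-step transcription of the paper's proof of Theorem \ref{mainthm}, with the correct 2D/1D substitutes (Lemmas \ref{expvarlmm2d}, \ref{condlmm2d}, \ref{condcalc2d}, \ref{mjvar2d}), the same two scales $k$ and $l$, the same decomposition $\var(N(U))=\ee(\var(N(U)\mid\mf_k))+\var(M_k)$, and the same telescoping of $\var(M_k)$ down to $\var(M_l)$. Your treatment of $d=2$ is correct and in fact slightly sharper than the paper's: by reindexing $r=k-j$ you exploit the geometric damping $2^{-r}$ to sum $\sum_r (r+O(1))^2 2^{-r}=O(1)$, whereas the paper first bounds every logarithm by its maximum and only then sums the geometric series.

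The one place where you go beyond transcription is the $d=1$ bookkeeping, and there your argument does not close. Bounding each of the $k-l\asymp\log(n\,\diam(U))$ increments $\ee(\var(M_j\mid\mf_{j-1}))$ by $C(\beta)A(U)(\log(7\diam(U)n))^2$ and then summing gives $C(\beta)A(U)(k-l)(\log(7\diam(U)n))^2\asymp(\log)^3$, not $(\log)^2$; the claimed ``absorption'' is a multiplication by an unbounded factor, and the ``headroom'' from $A(U)n^{(d-1)/d}$ degenerating to $A(U)$ is illusory, since the target bound for $d=1$ is $(A(U)+1)(\log)^2$ while the telescoped sum is genuinely of order $\sum_{r=0}^{k-l-1}(r+O(1))^2\asymp(k-l)^3$ --- there is no geometric weight to concentrate the sum at its last term. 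To be fair, the paper's own proof makes exactly the same leap: it writes $\sum_{j=l+1}^{k}2^{(d-1)j}\le Cn^{(d-1)/d}$, which for $d=1$ asserts $k-l\le C$. So you have reproduced the intended argument, soft spot included; but as written, neither your version nor the paper's yields the stated exponent $2$ on the logarithm when $d=1$ (exponent $3$ follows immediately), and you should either flag this or supply a genuinely different summation for the one-dimensional case rather than assert that the extra factor disappears.
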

\begin{proof}
Throughout this proof, $C(\beta)$ will denote any constant that depends only on $\beta$. The value of $C(\beta)$ may change from line to line or even within a line. 

The formula for the expectation follows from the $d$-dimensional version of Corollary \ref{expcor}. It remains to prove the variance bound. Choose $k$ such that
\[
\frac{1}{2}n^{-1/d}\le \sqrt{d} \cdot 2^{-k}\le n^{-1/d}. 
\]
Equation \eqref{numk} remains valid, as does the inequality
\eq{
\var(N(U)|\mf_k) &\le  \sum_{D\in \vv_k} p(D) N(D)^2. 
}
By Lemma \ref{expvarlmm2d} and our choice of $k$, 
\eq{
\ee(N(D)^2)&\le C(\beta) (\log(2^d\leb(D)n+3))^2 + \leb(D)^2n^2\le C(\beta)
}
for all $D\in \vv_k$. Note that each element of 
\[
\bigcup_{D\in \vv_k} (D\cap U) 
\]
is within distance $\sqrt{d}\cdot 2^{-k}$ of $\partial U$, and $p(D) 2^{-dk} = \leb(D\cap U)$. Since $\sqrt{d}\cdot 2^{-k}\le n^{-1/d}\le \diam(U)$ by our choice of $k$, this gives
\eq{
\ee(\var(N(U)|\mf_k)) &\le C(\beta)2^{dk}\sum_{D\in \vv_k} \leb(D\cap U) \\
&\le C(\beta) 2^{dk}A(U) 2^{-k}\le C(\beta)A(U)n^{(d-1)/d}.
}
Let $l$ be the smallest integer such that $\sqrt{d}\cdot 2^{-l} \le \diam(U)$. Note that $l\le k$. 
Together with \eqref{numk} and Lemma \ref{mjvar2d}, the above inequality shows that
\eq{
&\var(N(U))\le C(\beta)A(U)\sum_{j=l+1}^k(\log(2^{-d(j-1)} n+3))^{2} 2^{(d-1)j}  + \var(M_l)\\
&\le  C(\beta)A(U)(\log(2^d\diam(U)^d n+3))^{2}\sum_{j=l+1}^k 2^{(d-1)j}  + \var(M_l)\\
&\le C(\beta)A(U)n^{(d-1)/d} (\log(2^d\diam(U)^d n+3))^{2}  + \var(M_l).
}
By the definition of $l$, $\uu_i$ if empty for all $i<l$. Therefore
\[
M_l = \sum_{D\in \uu_l\cup \vv_l} p(D)N(D). 
\]
Note that for any $D\in \uu_l\cup \vv_l$, Lemma \ref{expvarlmm2d} gives
\eq{
\var(p(D)N(D)) &= p(D)^2\var(N(D))\le C(\beta)(\log(2^d\leb(D)n+3))^2\\
&= C(\beta)(\log(2^d 2^{-dl} n + 3))^2\\
&\le C(\beta)(\log(2^d \diam(U)^d n + 3))^2.
}
Moreover, it is easy to see that $U$ intersects at most $2^d$ members of $\dm_l$, and therefore $|\uu_l \cup \vv_l|\le 2^d$. From these observations, we get
\eq{
\var(M_l)\le C(\beta)(\log(2^d \diam(U)^d n + 3))^2.
} 
This completes the proof of the theorem. 
\end{proof}

\begin{proof}[Proofs of Theorems \ref{macthm2d} and \ref{micthm2d}]
These are consequences of Theorem \ref{mainthm2d} in the same way as Theorems \ref{macthm} and \ref{micthm} followed from Theorem \ref{mainthm}. 
\end{proof}

Finally, let us prove Theorem \ref{linearthm2d}. 

\begin{proof}[Proof of Theorem \ref{linearthm2d}]
The proof is very similar to the proof of Theorem~\ref{linearthm}, with minor modifications. As usual, $C(\beta)$ denotes any constant that depends only on $\beta$. Define $f(D)$ and $W_k$ as in the proof of Theorem \ref{linearthm}. Then $W_k$ is again a martingale, and equation \eqref{xfk} is still valid. 
Now choose $k$ such that
\[
n^{-1/d}\le 2^{-k}\le 2n^{-1/d}. 
\]
Then \eqref{varxf} continues to hold. 
Take any $j$. For each $D\in \dm_{j-1}$, let $c(D)$ denote the set of $2^d$ children of $D$. Proceeding as in the proof of Theorem~\ref{linearthm}, we get 
\eq{
&\var(X(f_j)|\mf_{j-1}) \\
&= \sum_{D\in \dm_{j-1}} \ee\biggl(\biggl(\sum_{D'\in c(D)} f(D')N(D') - f(D)N(D)\biggr)^2\biggl|\mf_{j-1}\biggr).
} 
Now notice that for any $D\in \dm_{j-1}$, 
\eq{
&\sum_{D'\in c(D)} f(D')N(D') - f(D)N(D) \\
&= \sum_{D'\in c(D)} (f(D')- f(D))\biggl(N(D')-\frac{N(D)}{2^d}\biggr).
}
Recall that $L$ is the Lipschitz constant of $f$. For any $D'\in c(D)$,
\[
|f(D')-f(D)|\le \sqrt{d} L 2^{-j+1}.
\]
As in the proof of Theorem \ref{linearthm}, 
\eq{
&\biggl(\sum_{D'\in c(D)} (f(D')- f(D))\biggl(N(D')-\frac{N(D)}{2^d}\biggr)\biggr)^2 \\
&\le 4^{-j+3}L^2\sum_{D'\in c(D)} \biggl(N(D')-\frac{N(D)}{2^d}\biggr)^2.
}
Therefore, by Lemma \ref{condcalc2d}, 
\eq{
&\ee\biggl(\biggl(\sum_{D'\in c(D)} f(D')N(D') - f(D)N(D)\biggr)^2\biggl|\mf_{j-1}\biggr)\\
&\le 4^{-j+3} L^2 \sum_{D'\in c(D)}\var(N(D')|\mf_{j-1})\\
&\le 4^{-j+4} L^2 K(\beta)(\log(N(D)+1))^2\le 4^{-j+4}L^2K(\beta)(\log(n+1))^2.
}
Consequently,
\eq{
\ee(\var(X(f_j)|\mf_{j-1})) &\le C(\beta) L^2(\log n)^24^{-j}|\dm_{j-1}|\le C(\beta)L^22^{(d-2)j} (\log n)^2.
}
For $D\in \dm_k$, let $s(D)$ be defined as in the proof of Theorem \ref{linearthm}. Then as before, we have 
\eq{
\var(X(f)|\mf_k) &\le \sum_{D\in \dm_k}\ee((s(D)-f(D)N(D))^2|\mf_k). 
}
By the Lipschitz condition,
\eq{
|s(D)-f(D)N(D)|\le \sqrt{d}L 2^{-k}N(D)
}
for each $D\in \dm_k$. Thus, by Lemma \ref{expvarlmm2d} and our choice of $k$,
\eq{
\ee((s(D)-f(D)N(D))^2) &\le 4^{-k+1}L^2 \ee(N(D)^2)\le C(\beta)L^2 4^{-k}.
}
Consequently,
\eq{
\ee(\var(X(f)|\mf_k)) &\le C(\beta)L^24^{-k}|\dm_k|\le C(\beta) L^22^{(d-2)k}\le C(\beta)L^2 n^{(d-2)/d}.
}
The proof is now easily completed by combining the steps. 
\end{proof}

\subsection{Proofs of the lower bounds}
Let us now prove Theorem \ref{lowthm2d}. The proof is similar to the proof of Theorem \ref{lowthm}, but with some significant changes due to the different nature of the potential. Let 
\[
\mt:= \{z+[0,1)^2: z\in \zz^2\}. 
\] 
\begin{lmm}\label{geomlmm02d}
Let $\mt$ be as above. Take any $D\in \mt$ and any $x\in D$. Let $\delta$ be the distance of $x$ from the boundary of $D$. Then any line through $x$ bifurcates $D$ into two parts, each of which has volume at least $\pi \delta^2/2$. 
\end{lmm}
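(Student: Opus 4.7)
The plan is to mimic the proof of Lemma \ref{geomlmm0} essentially verbatim, reducing dimension by one. The key geometric observation is the same: by the definition of $\delta$ as the distance from $x$ to $\partial D$, the open disk $B(x,\delta) \subseteq \rr^2$ of radius $\delta$ centered at $x$ is entirely contained in the unit square $D$.

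Next, I would take any line $\ell$ passing through $x$. Since $\ell$ passes through the center of $B(x,\delta)$, it bifurcates this disk into two half-disks of equal area $\pi \delta^2 / 2$ each. Simultaneously, $\ell$ partitions $D$ itself into two pieces $D_1$ and $D_2$. Because the disk $B(x,\delta)$ is contained in $D$, each half-disk is contained entirely in the corresponding piece $D_i$. Hence $\area(D_i) \ge \pi \delta^2 / 2$ for $i = 1, 2$, which is the desired bound.

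There is no real obstacle here; the statement is an elementary convexity/containment fact, and it is the direct two-dimensional analog of Lemma \ref{geomlmm0}. The only thing to be slightly careful about is the \emph{openness} of the ball (so that the boundary of $D$ is not touched, and $\delta$ really is achieved as the inner radius), but this follows immediately from the definition of $\delta$ as a distance to a closed boundary.
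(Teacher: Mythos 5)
Your proof is correct and is exactly the paper's argument: the paper's proof of this lemma simply says "Same as the proof of Lemma \ref{geomlmm0}," i.e., the open disk $B(x,\delta)$ is contained in $D$, any line through $x$ cuts it into two half-disks of area $\pi\delta^2/2$, and each half-disk lies in the corresponding part of $D$. Nothing further is needed.
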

\begin{proof}
Same as the proof of Lemma \ref{geomlmm0}. 
\end{proof}
\begin{lmm}\label{geomlmm2d}
Take any $x\in \rr^2$ and a unit vector $u = (u_1,u_2)\in S^1$. Let $L$ be the line that contains $x$ and is perpendicular to $u$. Suppose that
\eeq{
\min\{|u_1|, |u_2|\} \ge 0.1.\label{u1u2u32d}
}
Then there is an element $D\in \mathcal{T}$, within Euclidean distance $\sqrt{101}$ from $x$, which  is bifurcated by the line $P$ in such a way that each part has volume at least $6\times 10^{-5}$.
\end{lmm}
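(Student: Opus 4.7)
The plan is to mimic the proof of Lemma \ref{geomlmm} (the 3D case), which is actually simpler here because in $\rr^2$ the direction perpendicular to $u$ is essentially unique (up to sign), so there is no freedom to distribute among three coordinates.

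First I would construct a vector $y = (y_1, y_2) \perp u$ with controlled coordinates. Setting $y_1 := \textup{sign}(u_1)$ and $y_2 := -u_1 y_1/u_2$, we get $u\cdot y = 0$, $|y_1| = 1$, and $|y_2| = |u_1|/|u_2|$. Condition \eqref{u1u2u32d} gives $0.1 \le |y_2| \le 10$, hence $|y| \le \sqrt{1 + 100} = \sqrt{101}$.

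Next I would pick $\alpha \in [0,1]$ so that both coordinates of $x + \alpha y$ stay bounded away from the integer lattice, exactly as in the 3D proof. Start with the interval
\[
I_1 := \{x_1 + \alpha y_1 : 0 \le \alpha \le 1\},
\]
which has length $|y_1| = 1$. By Lemma \ref{intlmm} it has a subinterval $I_2 = \{x_1 + \alpha y_1 : a \le \alpha \le b\}$ with $b - a = 1/4$ whose points are at distance $\ge 1/4$ from every integer. Then
\[
I_3 := \{x_2 + \alpha y_2 : a \le \alpha \le b\}
\]
has length $(b-a)|y_2| \ge 0.25 \cdot 0.1 = 0.025$, so by Lemma \ref{intlmm} again it has a subinterval $I_4 = \{x_2 + \alpha y_2 : c \le \alpha \le d\}$ with $a \le c \le d \le b$ every point of which is at distance at least $0.025/4 > 0.00625$ from every integer.

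Finally, pick any $\alpha \in [c,d]$. Both coordinates of $x + \alpha y$ are at distance at least $0.00625$ from the nearest integer, so $x + \alpha y$ lies in some cube $D \in \mt$ at distance at least $0.00625$ from $\partial D$. Since $y \perp u$, the point $x + \alpha y$ lies on the line $L$, so by Lemma \ref{geomlmm02d} the line $L$ bifurcates $D$ into two pieces each of area at least
\[
\frac{\pi (0.00625)^2}{2} \ge 6 \times 10^{-5}.
\]
The Euclidean distance from $x$ to $D$ is at most $|x - (x + \alpha y)| = |\alpha|\,|y| \le |y| \le \sqrt{101}$, which completes the proof. There is no real obstacle: the proof is essentially bookkeeping of numerical constants inherited from the two applications of Lemma \ref{intlmm}, and one should only verify that the chosen thresholds ($0.1$, $0.25$, $0.00625$) are mutually consistent with the given bound $\sqrt{101}$ and the area bound $6 \times 10^{-5}$.
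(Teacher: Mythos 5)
Your proof is correct and follows essentially the same route as the paper: the same choice of $y=(\textup{sign}(u_1),-|u_1|/u_2)$, the same two applications of Lemma \ref{intlmm} to find $\alpha$ with both coordinates of $x+\alpha y$ at distance at least $0.00625$ from $\zz$, and the same appeal to Lemma \ref{geomlmm02d}. (Only a cosmetic slip: $0.025/4=0.00625$ exactly, not strictly greater, but the bound $\pi(0.00625)^2/2\ge 6\times 10^{-5}$ still holds.)
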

\begin{proof}
Take any $x = (x_1,x_2)\in \rr^2$ and $u=(u_1,u_2)\in S^1$ as in the statement of the lemma.  Let $L_0$ be the line with normal vector $u$ that contains the origin. Define 
\eq{
y_1= \textup{sign}(u_1), \ \ y_2 = -\frac{|u_1|}{u_2}. 
}
Then $y= (y_1,y_2)\in L_0$. Also, we have $|y_1|=1$,  and by condition~\eqref{u1u2u32d} and the fact that $|u_2|\le 1$,
\[
|y_2| = \frac{|u_1|}{|u_2|} \ge |u_1|\ge0.1.
\]
Now consider the set
\[
I_1 = \{x_1 + \alpha y_1: 0\le \alpha\le 1\}.
\]
Since $|y_1|=1$, $I_1$ is an interval of length $1$. By Lemma \ref{intlmm}, $I_1$ has a subinterval of $I_2$ of length $0.25$ such that any integer is at least at a distance $0.25$ from $I_2$. Moreover, since $|y_1|= 1$, $I_2$ is of the form
\[
\{x_1+\alpha y_1: a\le \alpha\le b\},
\]
where $b-a= 0.25$. Let
\[
I_3 := \{x_2+\alpha y_2: a\le \alpha\le b\}. 
\]
Since $|y_2|\ge 0.1$, $I_3$ has length at least $0.025$. Thus by Lemma \ref{intlmm}, $I_3$ contains a subinterval $I_4$ of length $0.00625$ such that any integer is at a distance at least $0.00625$ from $I_4$.

In particular, there is some $\alpha\in [0,1]$ such that $x_1+\alpha y_1\in I_2$ and $x_2+\alpha y_2\in I_4$. The distance of $x_i+\alpha y_i$ from the nearest integer is at least $0.00625$ for each $i$. Thus, the distance of the point $x+\alpha y$ from the boundary of the square $D\in \mt$ that contains $x+\alpha y$ is at least $0.00625$. By Lemma \ref{geomlmm02d} and the fact that $x+\alpha y\in L$, this proves $L$ bifurcates $D$ into two parts, each of which has volume at least $6\times 10^{-5}$. Lastly, note that 
\eq{
|(x+\alpha y)-x|&\le |y| = \sqrt{y_1^2+y_2^2}\le \sqrt{1 + \frac{1}{0.1^2}}\le \sqrt{101},
}
since $|u_1|\le 1$ and $|u_2|\ge 0.1$. 
This completes the proof of the lemma. 
\end{proof}
Now recall that the boundary of the set $U$ in the statement of Theorem~\ref{lowthm2d} is a simple smooth closed curve. In particular, we can choose a unit normal vector $u(x)$ at each $x\in \partial U$ such that the map $x\mapsto u(x)$ is smooth.
\begin{lmm}\label{dcapu2d}
Take any $x\in \partial U$ such that the normal vector $u(x)$ satisfies~\eqref{u1u2u32d}. Then there is some $j_0$ depending only on $U$ (but not on $x$), such that for all $j\ge j_0$, there is some $D\in \dm_j$ at distance at most $\sqrt{101} \cdot 2^{-j}$ from $x$, such that
\eeq{
10^{-5}\le \frac{\leb(D\cap U)}{\leb(D)}\le 1- 10^{-5}. \label{djbd2d}
}
\end{lmm}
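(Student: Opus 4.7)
The plan is to imitate the proof of the 3D analog (Lemma \ref{dcapu}), using Lemma \ref{geomlmm2d} in place of Lemma \ref{geomlmm} and exploiting the bounded curvature of $\partial U$. Fix $x\in \partial U$ with $u(x)$ satisfying \eqref{u1u2u32d}, and let $T_x$ denote the tangent line to $\partial U$ at $x$ (so $T_x$ passes through $x$ and is perpendicular to $u(x)$). Since $\partial U$ is a simple smooth closed curve, it has uniformly bounded curvature; hence there is a constant $C_U$, depending only on $U$, such that for every $\ep\in(0,1)$ the set $B(x,\ep)\cap \partial U$ lies inside the slab of width $C_U\ep^2$ around $T_x$, where $B(x,\ep)$ is the Euclidean ball of radius $\ep$ around $x$.

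Next I apply Lemma \ref{geomlmm2d} after rescaling by $2^j$. Under the map $y\mapsto 2^j y$, the collection $\dm_j$ is carried onto (a subset of) $\mt$, and $T_x$ is carried to a line through $2^j x$ perpendicular to the same unit vector $u(x)$, which still satisfies~\eqref{u1u2u32d}. Lemma \ref{geomlmm2d} therefore produces some $\tilde D\in\mt$ at Euclidean distance at most $\sqrt{101}$ from $2^j x$ which this line bifurcates into two parts, each of area at least $6\times 10^{-5}$. Scaling back, we obtain $D\in\dm_j$ at distance at most $\sqrt{101}\cdot 2^{-j}$ from $x$, bifurcated by $T_x$ into two parts each of area at least $6\times 10^{-5}\cdot 4^{-j}$.

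It remains to compare $D\cap U$ with the half of $D$ on the ``$U$-side'' of $T_x$. Since $D$ has side-length $2^{-j}$, we have $D\subseteq B(x,R 2^{-j})$ for $R:=\sqrt{101}+\sqrt{2}$; by the curvature bound, $B(x,R 2^{-j})\cap \partial U$ lies in a slab of width at most $C_U R^2\cdot 4^{-j}$ around $T_x$. The symmetric difference between $D\cap U$ and the half of $D$ cut off by $T_x$ is contained in the intersection of $D$ with this slab, and hence has area at most $2^{-j}\cdot C_U R^2\cdot 4^{-j} = C_U R^2\cdot 8^{-j}$. Dividing by $\leb(D)=4^{-j}$ gives a relative error of at most $C_U R^2\cdot 2^{-j}$, which is less than $(6\times 10^{-5}) - 10^{-5}$ provided $j\ge j_0$ for some $j_0=j_0(U)$. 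For such $j$, the bifurcation bounds from Lemma \ref{geomlmm2d} descend to \eqref{djbd2d}.

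There is no real obstacle here beyond bookkeeping: the argument is just the 2D rescaling of the 3D proof, with the curvature estimate $O(\ep^2)$ providing a correction of order $2^{-j}$ relative to $\leb(D)$, which is absorbed for $j$ large. The analog in 1D is trivial (and not needed, since Theorem \ref{lowthm2d} is stated only in 2D), because a smooth hypersurface in $\rr^1$ is a finite collection of points.
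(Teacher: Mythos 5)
Your proof is correct and follows essentially the same route as the paper, which simply notes that the argument of the 3D Lemma \ref{dcapu} carries over verbatim using Lemma \ref{geomlmm2d}: bounded curvature puts $B(x,\ep)\cap\partial U$ in a slab of width $O(\ep^2)$ around the tangent line, and the geometric lemma plus scaling by $2^j$ produces the desired square, with the slab contributing only an $O(2^{-j})$ relative error that is absorbed for $j\ge j_0(U)$. You have merely written out the "easy application and scaling" that the paper leaves implicit.
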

\begin{proof}
Same as the proof of Lemma \ref{dcapu}, using Lemma \ref{geomlmm2d} instead of Lemma \ref{geomlmm}.
\end{proof}
\begin{lmm}\label{k1lmm2d}
There is some $K_1>0$ and some $j_1\ge 1$ depending only on $U$ such that for any $j\ge j_1$, there is a set of at least $K_1 2^j$ squares $D\in \dm_j$ that satisfy~\eqref{djbd2d} and the union of these squares has diameter at most $\diam(U)/3$.
\end{lmm}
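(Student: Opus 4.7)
The plan is to mimic the proof of Lemma \ref{k1lmm}, replacing the 2D surface patch by a 1D arc of $\partial U$ and adjusting the packing exponent accordingly. First, I would locate a point $x\in\partial U$ where the unit normal satisfies \eqref{u1u2u32d}. A convenient choice is to take the line $P$ through the origin perpendicular to $(1,1)$, let $\alpha_0$ be the largest $\alpha$ for which the translated line $P_\alpha:=(\alpha,\alpha)+P$ meets the closure of $U$, and pick any $x$ in the intersection $\partial U\cap P_{\alpha_0}$. Then $P_{\alpha_0}$ is tangent to $\partial U$ at $x$, so $u(x)=\pm(1,1)/\sqrt{2}$, which certainly meets \eqref{u1u2u32d}.

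Because $\partial U$ is a simple smooth closed curve, the map $y\mapsto u(y)$ is continuous, so there exists $\ep\in(0,\diam(U)/7)$, depending only on $U$, such that every $y\in B(x,\ep)\cap\partial U$ still satisfies \eqref{u1u2u32d}. In two dimensions, the curve $B(x,\ep)\cap\partial U$ is a finite union of smooth arcs whose total arclength is bounded below by a positive constant depending only on $U$ (this is where $\ep$ being fixed matters, together with the smoothness of $\partial U$). Consequently, for any $\delta\in(0,1)$ we may choose a collection of at least $C\delta^{-1}$ points in $B(x,\ep)\cap\partial U$ that are pairwise separated by distance at least $50\delta$, where $C>0$ depends only on $U$.

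Setting $\delta=2^{-j}$ and applying Lemma \ref{dcapu2d} to each of these points, we obtain, for all $j$ exceeding some $j_0(U)$, at least $C\cdot 2^j$ dyadic squares $D\in\dm_j$ satisfying \eqref{djbd2d}, each lying within Euclidean distance $\sqrt{101}\cdot 2^{-j}$ of one of the chosen points. Because $50\cdot 2^{-j}>2\sqrt{101}\cdot 2^{-j}$ for $j$ large, the squares attached to distinct centres are distinct, so we indeed have $\ge K_1 2^j$ of them with $K_1:=C$. Finally, the union of these squares is contained in a Euclidean $\bigl(\ep+\sqrt{3}\cdot 2^{-j}\bigr)$-neighbourhood of $x$, which has diameter at most $2\ep+2\sqrt{3}\cdot 2^{-j}<\diam(U)/3$ once $\ep<\diam(U)/7$ and $j$ is large enough; this fixes $j_1(U)$.

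The only nontrivial step is the packing estimate in the second paragraph: one must verify that within the fixed neighbourhood $B(x,\ep)\cap\partial U$ the arclength is bounded below uniformly, so that the $\delta^{-1}$ packing is genuine. This follows from the fact that $\partial U$ is a smooth embedded closed curve, hence locally a graph over its tangent line with bounded $C^2$ data, exactly as in the 3D case but with the dimension of the tangent space reduced by one. Nothing else changes from the proof of Lemma \ref{k1lmm}.
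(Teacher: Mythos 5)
Your proposal is correct and follows essentially the same route as the paper: the paper's proof of Lemma \ref{k1lmm2d} is literally "same as the proof of Lemma \ref{k1lmm} with the packing count $4^j$ replaced by $2^j$," and your argument is exactly that adaptation — extremal tangent line perpendicular to $(1,1)$, a fixed neighbourhood where the normal satisfies \eqref{u1u2u32d}, a $\delta^{-1}$ packing of $50\delta$-separated points on the arc, and Lemma \ref{dcapu2d} applied at scale $\delta=2^{-j}$. The minor constants you quote (e.g.\ $\sqrt{3}\cdot 2^{-j}$ versus $(\sqrt{101}+\sqrt{2})\cdot 2^{-j}$ for the enlargement of the neighbourhood) are immaterial for large $j$, so nothing is missing.
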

\begin{proof}
Same as the proof of Lemma \ref{k1lmm}, with a small adjustment for dimension that replaces $K_14^j$ by $K_12^j$. 
\end{proof}
\begin{lmm}\label{repulsion2d}
Take any $n\ge 1$ and $\beta>0$, and a Borel set $A\subseteq [0,1)^2$ with $0<\leb(A)<1$. Let $\delta>0$ be a number such that $\delta \le \leb(A)\le 1-\delta$. Then
\[
c(\beta, n,\delta)\le \var(N(A)) \le n^2,
\]
where $c(\beta, n,\delta)$ is a positive real number that depends only on $\beta$, $n$ and $\delta$. 
\end{lmm}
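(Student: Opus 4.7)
The upper bound $\var(N(A)) \le n^2$ is immediate from $0 \le N(A) \le n$.

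For the lower bound, the plan is to exhibit two integer values that $N(A)$ attains with probability bounded below by a function of $(\beta, n, \delta)$ alone. The natural candidates are $0$ and $n$: once we have $\pp(N(A)=n) \ge c_n$ and $\pp(N(A)=0) \ge c_0$ with both strictly positive, writing $\mu := \ee[N(A)]$ and splitting on whether $\mu \ge n/2$ (then $\mu^2 \ge n^2/4$) or $\mu < n/2$ (then $(n-\mu)^2 > n^2/4$) gives
\[
\var(N(A)) \ge c_n (n-\mu)^2 + c_0 \mu^2 \ge \tfrac{n^2}{4}\min(c_n,c_0) \ge \tfrac{1}{4}\min(c_n,c_0),
\]
which yields the desired $c(\beta,n,\delta)$.

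To bound $\pp(N(A)=n) = Z(n,\beta)^{-1}\int_{A^n} e^{-\beta H_n(x)}\,dx$ from below, I would apply Jensen's inequality with respect to the uniform measure on $A^n$ to the numerator,
\[
\int_{A^n} e^{-\beta H_n(x)}\,dx \ge \leb(A)^n \exp\!\left(-\beta\binom{n}{2}\cdot\frac{1}{\leb(A)^2}\iint_{A\times A}w(x,y)\,dx\,dy\right),
\]
bound the inner double integral by $\iint_{[0,1)^4}w(x,y)\,dx\,dy = 4/3$ via Lemma \ref{wlmm2d}, and use $\leb(A)\ge \delta$ to obtain a lower bound of the form $\delta^n e^{-C\beta n^2/\delta^2}$. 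For the denominator, I would exploit the fact that $w(x,y)\ge 1$ for distinct $x,y$ in the 2D model to deduce $H_n \ge \binom{n}{2}$ almost everywhere, hence $Z(n,\beta)\le e^{-\beta\binom{n}{2}}$. Dividing produces a positive $c_n(\beta,n,\delta)$. Since $\{N(A)=0\}=\{N(A^c)=n\}$ and $\leb(A^c) \ge \delta$ as well, the identical argument with $A$ replaced by $A^c$ gives $c_0(\beta,n,\delta) > 0$.

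The only mildly delicate step is the Jensen estimate on the numerator combined with the global control on $\iint w$; everything else is arithmetic. Notably this approach requires no regularity assumption on $A$ beyond Borel-measurability, and the resulting constant $c(\beta,n,\delta)$ is of the explicit shape $\delta^n e^{-C\beta n^2/\delta^2}$ (times a factor of $n^2/4$ from the final step), which decays as $\delta \downarrow 0$ but remains strictly positive for every admissible $\delta$.
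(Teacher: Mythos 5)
Your proof is correct and follows essentially the same route as the paper: Jensen's inequality on $A^n$ and $(A^c)^n$ together with Lemma \ref{wlmm2d} to lower-bound $\pp(N(A)=n)$ and $\pp(N(A)=0)$, then a two-point variance bound. The only (harmless) differences are cosmetic refinements --- you use $Z(n,\beta)\le e^{-\beta\binom{n}{2}}$ where the paper settles for $Z(n,\beta)\le 1$, and your averaging gives $\leb(A)^{-2}$ in the exponent where the paper has the cruder $\leb(A)^{-n}$ --- yielding a slightly better but equally unimportant constant.
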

\begin{proof}
The upper bound is trivial since $N(A)\le n$. For the lower bound, the case $n=1$ is easy, since in that case $N(A)$ is a Bernoulli$(\leb(A))$ random variable. So let us take $n\ge 2$. Trivially, $Z(n,\beta)\le 1$. Therefore, by Jensen's inequality and Lemma \ref{wlmm2d},
\eq{
\pp(N(A)=n)&=\mu_{n,\beta}(A^n) \ge \int_{A^n} e^{-\beta H_n(x_1,\ldots,x_n)}\, dx_1\cdots dx_n \\
&\ge \leb(A^n) \exp\biggl(-\frac{\beta}{\leb(A^n)} \int_{A^n}H_n(x_1,\ldots,x_n)\, dx_1\cdots dx_n \biggr)\\
&\ge \leb(A^n)\exp\biggl(-\frac{4\beta }{3\leb(A^n)}{n \choose 2}\biggr)\\
&\ge \delta^n\exp\biggl(-\frac{4\beta }{3\delta^n}{n \choose 2}\biggr). 
}
Similarly, if $B= [0,1)^2\setminus A$, then 
\eq{
\pp(N(A)=0)=\mu_{n,\beta}(B^n)\ge \delta^n\exp\biggl(-\frac{4\beta }{3\delta^n}{n \choose 2}\biggr).
}
With the two lower bounds derived above, it is now easy to complete the proof, for example using Chebychev's inequality.
\end{proof}
\begin{proof}[Proof of Theorem \ref{lowthm2d}]
In this proof, as in the proof of Theorem \ref{lowthm}, the phrase `$n$ sufficiently large' will mean `$n\ge n_0$, where $n_0$ depends only on $U$ and $\beta$'. Also, $C$ will denote any positive universal constant, $C(\beta)$ will denote any positive constant that depends only on $\beta$, and $C(U,\beta)$ will denote any positive constant that depends only on $U$ and~$\beta$.

Choose $k$ such that 
\[
n^{-1/2}\le 2^{-k} \le 2n^{-1/2}. 
\]
Then for any $D\in \dm_k$, Lemma \ref{expvarlmm2d} gives
\eeq{
\ee(N(D)^2) \le L_1(\beta),\label{nd22d}
}
where $L_1(\beta)$ is a positive integer that depends only on $\beta$. Let 
\[
m := 1000 L_1(\beta).
\]
If $n$ is sufficiently large, then there is a set $\mc\subseteq \dm_k$ that satisfies the conclusions of Lemma \ref{k1lmm2d}. In particular, arguing as in the proof of Theorem \ref{lowthm}, we get
\eeq{
C_1(U,\beta) 2^k\le |\mc|\le C_2(U,\beta)2^k,\label{mcsize2d}
}
where $C_1(U,\beta)$ and $C_2(U,\beta)$ are positive constants that depend only on $U$ and $\beta$. 
Let $Q$ be the union of the elements of $\mc$.  Proceeding as in the proof of Theorem \ref{lowthm}, and using Theorem \ref{mainthm2d} instead of Theorem \ref{mainthm}, we get
\eeq{
\var(N(Q)) &\le C(U,\beta) n^{1/2}(\log n)^2, \label{varnq2d}
}
provided that $n$ is sufficiently large. 
Also, by Lemma \ref{expvarlmm2d} and our choice of~$k$,
\eq{
\ee(N(Q)) &= \leb(Q) n = |\mc| 4^{-k} n\ge |\mc|.
}
Thus, by \eqref{mcsize2d}, \eqref{varnq2d} and Chebychev's inequality,
\eeq{
\pp\biggl(\frac{N(Q)}{|\mc|}\ge \frac{1}{2}\biggr) &\ge 1-\frac{4\var(N(Q))}{|\mc|^2}\ge1-C(U,\beta) n^{-1/2}(\log n)^2.\label{a1ineq2d}
}
Let $a_1$, $a_2$, $p_1$, $p_2$ and $q$ be defined as in the proof of Theorem \ref{lowthm}. By the inequality \eqref{nd22d}, $\ee(a_2)\le L_1(\beta)$. Thus, 
\eeq{
\pp(a_2\ge 2L_1(\beta))\le \frac{1}{2}. \label{a2ineq2d}
}
By the Paley--Zygmund second moment inequality,
\eq{
p_1\ge \frac{a_1^2}{a_2}, 
}
and so by \eqref{a1ineq2d} and \eqref{a2ineq2d},
\eq{
\pp\biggl(p_1 \ge \frac{1}{8L_1(\beta)}\biggr) &\ge \pp\biggl(a_1\ge \frac{1}{2}, \, a_2 \le 2L_1(\beta)\biggr)\ge \frac{1}{2}-C(U,\beta) n^{-1/2}(\log n)^2. 
}
Choose $n$ so large that the above lower bound at least $1/3$. Next, note that by Lemma \ref{expvarlmm2d} and Markov's inequality,
\eq{ 
\ee(p_2) &\le \frac{1}{m|\mc|} \sum_{D\in \mc}\ee(N(D))\le \frac{4}{m},
}
and hence
\eq{
\pp\biggl(p_2\ge \frac{16}{m}\biggr)\le \frac{1}{4}.
}
Since $q=p_1-p_2$ and 
\[
\frac{1}{8L_1(\beta)} \ge \frac{32}{m},
\]
we get 
\eq{
\pp\biggl(q\ge \frac{16}{m}\biggr) &\ge \pp\biggl(p_1\ge \frac{32}{m},\, p_2\le \frac{16}{m}\biggr)\ge \frac{1}{3}-\frac{1}{4}= \frac{1}{12}.
}
Let $\mc_0$ be the set of all $D\in \mc$ such that $0<N(D)\le m$. The above inequality and \eqref{mcsize2d} show that if $n$ is sufficiently large, then 
\eeq{
\pp(|\mc_0| \ge L_2(\beta)n^{1/2})\ge \frac{1}{13},\label{cstar2d}
}
where  $L_2(\beta)$ is a positive constant that depends only on $\beta$. 
By Lemma~\ref{condlmm2d}, the random variables $\{N(D\cap U): D\in \dm_k\}$ are independent given $\mf_k$. Moreover, for each $D\in \mc_0$, $N(D\cap U)\le m\le C(\beta)$, and by Lemma \ref{condlmm2d} and Lemma \ref{repulsion2d},
\[
\var(N(D\cap U)|\mf_k) \ge L_3(U,\beta),
\]
where $L_3(U,\beta)$ is a positive constant that depends only on $U$ and $\beta$. (This is the crucial difference with the proof of Theorem \ref{lowthm}. The scale invariance of the model in dimension two is not valid in dimension three.) Thus, if we let
\[
M := \sum_{D\in \mc_0} N(D\cap U),
\]
then the Berry--Esseen theorem shows that for any interval $I$,
\eeq{
\pp(M\in I|\mf_k)\le \frac{C(U,\beta)(|I|+1)}{\sqrt{|\mc_0|}}, \label{berry2d}
}
where $|I|$ denotes the length of $I$. Since 
\[
N(U) = \sum_{D\in \dm_k} N(D\cap U) = \sum_{D\in \dm_k\setminus \mc_0}N(D\cap U) + M,
\]
and the two terms in the last expression are independent given $\mf_k$, the inequality \eqref{berry2d} implies that
\eq{ 
\pp(N(U)\in I|\mf_k)\le \frac{C(U,\beta)(|I|+1)}{\sqrt{|\mc_0|}}.
}
Therefore by \eqref{cstar2d},
\eq{
\pp(N(U)\in I)\le C(U,\beta)(|I|+1)n^{-1/4} + \frac{12}{13}
}
if $n$ is sufficiently large. This completes the proof.
\end{proof}

\section*{Acknowledgments}
I thank Erik Bates for carefully checking the proofs, and Paul Bourgade, Persi Diaconis, Subhro Ghosh, Adrien Hardy, Joel Lebowitz, Satya Majumdar, Charles Radin, Sylvia Serfaty and H.-T.~Yau for helpful discussions and comments. 

\end{document}